\renewcommand{\vec}[1]{\mathbf{#1}}
\def\subFV{\scriptscriptstyle{FV}}
\def\subFS{\scriptscriptstyle{FS}}
\def\subVS{\scriptscriptstyle{VS}}
\newcommand{\be}{\begin{equation}}
\newcommand{\ee}{\end{equation}}
\newcommand{\ba}{\begin{array}}
\newcommand{\ea}{\end{array}}
\newcommand{\bea}{\begin{eqnarray}}
\newcommand{\eea}{\end{eqnarray}}
\newcommand{\beas}{\begin{eqnarray*}}
\newcommand{\eeas}{\end{eqnarray*}}
\newtheorem{thm}{Theorem}[section]
\newtheorem{prop}{Proposition}[section]
\newtheorem{remark}{Remark}[section]
\newtheorem{coro}{Corollary}[section]
\newtheorem{lem}{Lemma}[section]
\numberwithin{equation}{section}
\renewcommand{\theequation}{\arabic{section}.\arabic{equation}}
\newcommand{\fakesection}[1]{%
  \par\refstepcounter{section}
  \sectionmark{#1}
  \addcontentsline{toc}{section}{\protect\numberline{\thesection}#1}
}
\begin{document}

\begin{frontmatter}

\title{An energy-stable parametric finite element method for\\
 anisotropic surface diffusion}

\author[1]{Yifei Li}
\address[1]{Department of Mathematics, National University of
Singapore, Singapore, 119076}
\ead{e0444158@u.nus.edu}
\author[1]{Weizhu Bao\corref{2}}
\ead{matbaowz@nus.edu.sg}
\cortext[2]{Corresponding author.}


\begin{abstract}

We propose an energy-stable parametric finite element method (ES-PFEM)
to discretize the motion of a closed curve under surface diffusion with an anisotropic surface energy $\gamma(\theta)$ -- anisotropic surface diffusion -- in two dimensions, while $\theta$ is the angle between the outward unit normal vector and the vertical axis. By introducing a positive definite surface energy (density) matrix $G(\theta)$, we present a new and simple variational formulation for the anisotropic surface diffusion and prove that it satisfies area/mass conservation and energy dissipation. The variational
problem is discretized in space by the parametric finite element method
and area/mass conservation and energy dissipation are established for the
semi-discretization. Then the problem is further discretized in time
by a (semi-implicit) backward Euler method so that only a linear system is to be solved at each time step for the full-discretization and thus it is efficient. We establish
well-posedness of the full-discretization and identify some simple conditions on $\gamma(\theta)$ such that the full-discretization keeps energy dissipation and thus it is unconditionally energy-stable.
Finally the ES-PFEM is applied to simulate solid-state dewetting of thin films with anisotropic surface energies, i.e. the motion of an open curve under anisotropic surface diffusion with proper boundary conditions at
the two triple points moving along the horizontal substrate. Numerical results are reported to demonstrate
the efficiency and accuracy as well as energy dissipation of the proposed
ES-PFEM.
\end{abstract}



\begin{keyword}
Anisotropic surface diffusion, anisotropic surface energy,
parametric finite element method, energy-stable, solid-state dewetting
\end{keyword}

\end{frontmatter}


\section{Introduction}
Surface diffusion is a general and important process involving the motion of adatoms, atomic clusters (adparticles), and molecules at material surfaces and interfaces in solids \cite{Oura}. It is an important mechanism and/or kinetics in epitaxial growth, surface phase formation, heterogeneous catalysis, and other areas in surface/materials  science \cite{Shu}. Due to different surface lattice orientations at material surface in solids, orientational anisotropy is a general pattern
in both diffusion rates and mechanisms at the various surface orientations of a given material. This orientational anisotropy causes anisotropic surface
energy and thus generates {\bf anisotropic surface diffusion} at material surfaces and interfaces in solids \cite{Oura,Thompson12}. In fact,
surface/anisotropic surface diffusion has manifested broader and significant applications in materials science and solid-state physics as well as computational geometry, such as crystal growth of nanomaterials \citep{cahn1991stability}, morphology development in alloys, evolution of voids in microelectronic circuits \citep{li1999numerical}, solid-state dewetting \cite{Thompson12,Ye10a,Srolovitz86,Jiang2012,wang2015sharp}, deformation of images \citep{clarenz2000anisotropic}, etc.

The mathematical model for surface diffusion in materials science can be
traced back to the work by Mullins \citep{mullins1957theory} for describing the diffusion at interfaces in alloys. Later, Dav{\`i} and Gurtin \citep{davi1990motion} extended the model to anisotropic surface diffusion.
By introducing the weighted mean curvature, Cahn and Taylor \citep{Cahn94,taylor1994linking} proposed a simple mathematical model and
showed that it is equivalent to the model in the literature for the anisotropic surface diffusion. For more details, we refer \cite{cahn1991stability,Bao17,jiang2016solid,Jiang} and
references therein.


As illustrated in Figure \ref{fig:model2d}, let $\Gamma:= \Gamma (t)$ be
a closed curve in two dimensions (2D), which is represented by $\vec X:=\vec X(s,t)=(x(s,t),y(s,t))^T\in{\mathbb R}^2$ with $t$ denoting the time and $s$ being the arc length parametrization of $\Gamma$. The motion of $\Gamma$ under anisotropic surface diffusion is governed by the following geometric partial differential equation (PDE) \cite{Cahn94,Bao17,jiang2016solid}:
\begin{equation}\label{anisotropic surface diffusion, original}
	\partial_t\vec X=\partial_{ss}\mu\,\vec n,
\end{equation}
where $\vec \tau=(\cos\theta,\sin\theta)^T$ is the unit tangent vector, $\vec n=(-\sin \theta,\cos \theta)^T$  is the outward unit normal vector with $\theta$ being the angle between $\vec n$ and the vertical axis,  and $\mu:=\mu(s,t)$ is the weighted mean curvature (or chemical potential) defined as \cite{Cahn94,Bao17,jiang2016solid}:
\begin{equation}\label{def of mu, original}
	\mu=\left[\gamma(\theta)+\gamma''(\theta)\right]\kappa,
\end{equation}
with $\kappa:=-(\partial_{ss}\vec X)\cdot \vec n$ being the curvature and $\gamma(\theta)\in C^2([-\pi,\pi])$ being the surface energy, which is a
dimensionless positive and periodic function satisfying $\gamma(-\pi)=\gamma(\pi)$ and
$\gamma^\prime(-\pi)=\gamma^\prime(\pi)$. The initial data for
\eqref{anisotropic surface diffusion, original} is given as
\begin{equation}\label{init}
\vec X(s,0)=\vec X_0(s)=(x_0(s),y_0(s))^T, \qquad 0\le s\le L_0,
\end{equation}
where $L_0$ is the length of the initial curve $\Gamma_0=\Gamma(0)$.
\begin{figure}[http]
\centering
\includegraphics[width=0.6\textwidth]{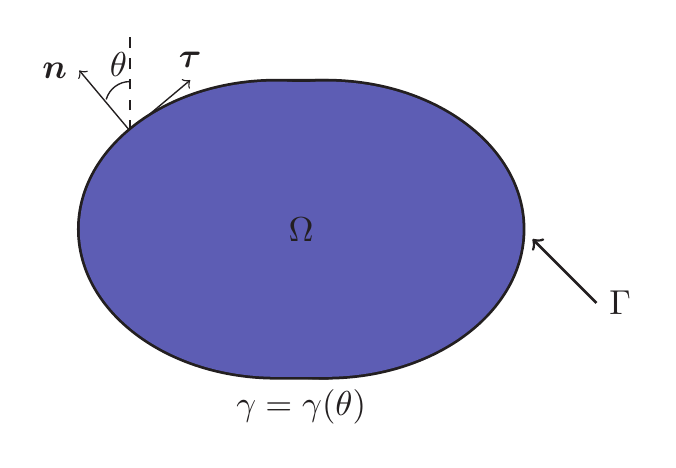}
\caption{An illustration of a closed curve $\Gamma$ in two dimensions under anisotropic surface diffusion with an anisotropic surface energy $\gamma(\theta)$, while $\theta$ is the angle between the outward unit normal vector $\vec n$ and the $y$-axis.}
\label{fig:model2d}
\end{figure}

Since $\Gamma(t)$ is parameterized by the arc length parameter $s$, the tangential vector $\boldsymbol\tau$ and the outward unit normal vector ${\bf n}$ can be expressed as
\begin{equation}\label{normtang}
{\boldsymbol \tau}=\partial_{s}\vec X=(\partial_s x,\partial_s y)^T, \quad
{\bf n}=-{\boldsymbol \tau}^\perp=-\partial_{s}\vec X^\perp=
(-\partial_s y,\partial_s x)^T, \quad \partial_s x=\cos\theta, \quad \partial_s y=\sin\theta.
\end{equation}
In addition, the curvature $\kappa$ can also be formulated by $s$ and $\theta$ as
\begin{equation}\label{def of kappa by theta}
\kappa=-(\partial_{ss}\vec X)\cdot \vec n= \partial_{ss}x\,\partial_s y-\partial_{ss}y\,\partial_s x =-(\sin^2\theta+\cos^2\theta)\partial_s\theta =-\partial_s\theta.
\end{equation}

When $\gamma(\theta)\equiv 1$ for $\theta\in [-\pi,\pi]$, it is called as {\sl isotropic} surface energy; and in this case, $\mu=\kappa$
in \eqref{def of mu, original}, and
\eqref{anisotropic surface diffusion, original} is for surface diffusion
\cite{barrett2007parametric,mullins1957theory,Jiang2020,BaoZ2021}.
On the contrary, when $\gamma(\theta)$ is not a constant function,
it is called as {\sl anisotropic} surface energy; and in this case, $\mu$ is
called as the weighted mean curvature (or chemical potential), and
\eqref{anisotropic surface diffusion, original} is for anisotropic surface diffusion \cite{Jiang,taylor1994linking}. In addition, when $\tilde \gamma(\theta): = \gamma(\theta)+\gamma''(\theta)>0$ for $\theta\in [-\pi,\pi]$, i.e.
the surface stiffness $\tilde \gamma(\theta)$ does not change sign,
it is called as {\sl weakly anisotropic}; and when $\tilde \gamma(\theta)$ changes sign for $\theta\in [-\pi,\pi]$, it is called as {\sl strongly anisotropic}. In this paper, we assume that $\gamma(\theta)$ is isotropic/weakly anisotropic, i.e. $\gamma(\theta)+\gamma''(\theta)>0$ for $\theta\in [-\pi,\pi]$. Typical anisotropic surface energy $\gamma(\theta)$ in materials science includes:
\medskip

(i)  the $k$-fold anisotropy surface energy \cite{bao2017parametric}
\begin{equation}\label{kfold}
	\gamma(\theta)=1+\beta\cos(k(\theta-\theta_0)),\qquad \theta\in[-\pi,\pi],
\end{equation}
where $k=2,3,4,6$, $\beta$ is the dimensionless anisotropic strength constant and $\theta_0\in [-\pi,\pi]$ is a constant;

(ii) the ellipsoidal anisotropy surface energy \cite{taylor1994linking}
\begin{equation}\label{ellipsoidal}
	\gamma(\theta)=\sqrt{a+b\cos^2\theta},\qquad  \theta\in[-\pi,\pi],
\end{equation}
where $a$ and $b$ are two dimensionless constants satisfying $a>0$ and $a+b>0$; and

(iii) the Riemannian  metric  anisotropy surface energy \cite{barrett2008variational}
\begin{equation}\label{BGN}
	\gamma(\theta)=\sum_{k=1}^K\sqrt{\vec n(\theta)^T G_k\vec n(\theta)},\qquad \hbox{with}\quad  \vec n(\theta)=(-\sin \theta,\cos \theta)^T, \qquad \theta\in[-\pi,\pi],
\end{equation}
where $K$ is a positive integer, and $G_k \in \mathbb{R}^{2\times 2} (k=1,2,\ldots,K)$ are symmetric positive definite matrices. We remark here
that when $K=1$ and $G_1={\rm diag}(a, b+a)$ in \eqref{BGN}, then the Riemannian  metric  anisotropy surface energy \eqref{BGN} collapses
to the ellipsoidal anisotropy surface energy \eqref{ellipsoidal}.


Let $A(t)$ be the area/mass of the film  (i.e., the  region $\Omega(t)$ enclosed by the curve $\Gamma(t)$) and $W_c(t)$ be the total interfacial free energy,
which are defined as
\begin{equation}\label{AtWct}
A(t):=\int_{\Omega(t)}1\,d{\bf x}=\int_0^{L(t)}y(s,t)\partial_sx(s,t)\,ds, \qquad W_c(t):=\int_{\Gamma(t)}\gamma(\theta)\,ds=\int_0^{L(t)}\gamma(\theta)\,ds, \qquad t\ge0,
\end{equation}
where $L(t):=\int_{\Gamma(t)}1\,ds$ is the length of $\Gamma(t)$, one can prove that \cite{bao2017parametric,Bao17,Barrett07b}
\begin{equation}
\label{eqn:area conservation pde}
\frac{d}{dt}A(t)=0,\qquad
\frac{d}{dt}W_c(t)=-\int_0^{L(t)}(\partial_s \mu)^2ds\leq 0, \qquad t\ge0,
\end{equation}
which immediately implies the anisotropic surface diffusion \eqref{anisotropic surface diffusion, original}-\eqref{def of mu, original}
with \eqref{init} satisfies area/mass conservation and
energy dissipation, i.e.
\begin{equation}
A(t) \equiv A(0)=\int_0^{L_0}y_0(s)x_0^\prime(s)\,ds,\qquad W_c(t)\le W_c(t_1)\le W_c(0)=\int_{\Gamma_0}\gamma(\theta)\,ds, \qquad t\ge t_1\ge0.
\label{eqn:massenergy2dd}
\end{equation}

For the surface diffusion, i.e. $\gamma(\theta)\equiv 1$ in
\eqref{def of mu, original}, by reformulating
\eqref{anisotropic surface diffusion, original}-\eqref{def of mu, original}
with $\gamma(\theta)\equiv 1$ into
\begin{subequations}
\label{eqn:2disonew}
\begin{numcases}{}
\label{eqn:2disonew1}
\vec{n}\cdot \partial_{t}\vec{X}-\partial_{ss}\kappa=0,  \\
\label{eqn:2disonew2}
\kappa\,\vec{n}+\partial_{ss}\vec{X}=0, \qquad 0<s<L(t),\quad t>0,
\end{numcases}
\end{subequations}
Barrett {\it et al.} \cite{barrett2007parametric,Barrett07b,barrett2019finite} introduced a novel variational formulation of \eqref{eqn:2disonew} and presented an elegant parametric finite element method (PFEM) for the evolution of a closed curve under surface diffusion. The PFEM has a few good properties including unconditional stability, energy dissipation and asymptotic mesh equal distribution (AMED). The proposed PFEM was successfully extended for simulating the anisotropic surface diffusion with the specific Riemannian  metric  anisotropy surface energy \eqref{BGN} by adapting a variational formation of \eqref{anisotropic surface diffusion, original}-\eqref{def of mu, original} via the anisotropic surface energy $\gamma$ in terms of  $\gamma(\vec{n})$ instead of $\gamma(\theta)$ by Barrett {\it et al.} \cite{barrett2008variational}.
The PFEM was also extended for solving the anisotropic surface diffusion with applications in simulating solid-state dewetting by reformulating
\eqref{anisotropic surface diffusion, original}-\eqref{def of mu, original}
into
\begin{subequations}
\label{eqn:2disonews}
\begin{numcases}{}
\label{eqn:2disonews1}
\vec{n}\cdot \partial_{t}\vec{X}-\partial_{ss}\mu=0,  \\
\label{eqn:2disonews2}
\mu=\left[\gamma(\theta)+\gamma''(\theta)\right]\kappa,\qquad 0<s<L(t),\quad t>0,\\
\label{eqn:2disonews3}
\kappa\,\vec{n}+\partial_{ss}\vec{X}=0,
\end{numcases}
\end{subequations}
and obtaining a variational formulation with $(\vec{X},\mu,\kappa)$ as
unknown functions \cite{bao2017parametric}.  Unfortunately those good properties of
the PFEM for surface diffusion, such as unconditional stability, energy dissipation and asymptotic mesh equal distribution,  are lost in the above extension for general anisotropic surface diffusion \cite{bao2017parametric}.

The main aim of this paper is to present a new and simple variational formulation for the anisotropic surface diffusion
\eqref{anisotropic surface diffusion, original}-\eqref{def of mu, original}
with $(\vec{X},\mu)$ as unknown functions by introducing an anisotropic surface energy matrix $G(\theta)$ depending on $\gamma(\theta)$.
An energy-stable parametric finite element method (ES-PFEM)
is then proposed for the discretization of the new variational problem
under some simple conditions on $\gamma(\theta)$. The proposed ES-PFEM
for anisotropic surface diffusion enjoys most good properties of the original
PFEM for surface diffusion, such as semi-implicit and thus efficient,
unconditional stability, energy dissipation and asymptotic mesh quasi-equal distribution. The proposed ES-PFEM is extended to simulate solid-state dewetting, i.e. the motion of an open curve under anisotropic surface diffusion and contact line migration \cite{bao2020energy}.

The rest of the paper is organized as follows: In section 2, we present a new and simple  variational formulation and prove its area/mass conservation and energy dissipation. In section 3, we propose a semi-discretization in space by PFEM for the variational problem and show its area/mass conservation and energy dissipation. In section 4, we present a full-discretization by adapting a (semi-implicit) backward Euler method in time, establish
well-posedness of the full-discretization and identify some simple conditions on $\gamma(\theta)$ such that the full-discretization is energy dissipative.
Extension of the ES-PFEM to simulate solid-state dewetting of thin films under anisotropic surface diffusion and contact line migration is presented
in section 5. Numerical results are reported in section 6 to demonstrate
the efficiency, accuracy and unconditional energy stability of the proposed ES-PFEM.
Finally, some conclusions are drawn in section 7.

\section{A new variational formualtion and its properties}

In this section, we present a new and simple variational formulation for
the anisotropic surface diffusion
\eqref{anisotropic surface diffusion, original}-\eqref{def of mu, original}
and establish its area/mass conservation and energy dissipation.

\subsection{The new formulation}

Similar to \eqref{eqn:2disonew} for the surface diffusion, we reformulate the
anisotropic surface diffusion
\eqref{anisotropic surface diffusion, original}-\eqref{def of mu, original}
for the evolution of a closed curve as
\begin{subequations}
\label{eqn:original formulation gamma(theta)}
\begin{numcases}{}
\label{eqn:original gamma(theta) aniso eq1}
\vec n\cdot \partial_t\vec X -\partial_{ss}\mu=0,\qquad 0<s<L(t),\qquad t>0,   \\
\label{eqn:original gamma(theta) aniso eq2}
\mu\,\vec n+\left[\gamma(\theta)+\gamma''(\theta)\right]\partial_{ss}\vec X=0.
\end{numcases}
\end{subequations}

In order to obtain a variational formulation of
\eqref{eqn:original formulation gamma(theta)},
for convenience, we introduce a time independent variable $\rho$ such that $\Gamma(t)$ can be parameterized over the fixed domain $\rho\in\mathbb{I}=[0,1]$ (here $\rho$ and $s$ can be respectively regarded as the Lagrangian and Eulerian variables of the closed curve $\Gamma(t)$, and
we do not distinguish $\vec X(\rho,t)$ and $\vec X(s,t)$ for representing $\Gamma(t)$ when there is no misunderstanding) as
\begin{equation}
\Gamma(t):=\vec X(\rho,t)=(x(\rho,t),~y(\rho,t))^T:\; \mathbb{I}\times [0, T]\;\rightarrow \;\mathbb{R}^2.
\end{equation}
Based on this parametrization, the arc length parameter $s$ can be given as $s(\rho,t)=\int_0^\rho |\partial_q\vec{X}|\,dq$, and we have $\partial_\rho s=|\partial_\rho\vec{X}|,\, ds=\partial_\rho s d\rho=|\partial_\rho \vec X|d\rho$.
We also introduce the functional space with respect to the
evolution of the closed curve $\Gamma(t)$ as
\begin{equation}
L^2(\mathbb{I})=\left\{u: \mathbb{I}\rightarrow \mathbb{R} \ |\   \int_{\Gamma(t)}|u(s)|^2 ds
=\int_{\mathbb{I}} |u(s(\rho,t))|^2 \partial_\rho s\, d\rho <+\infty \right\},
\end{equation}
equipped with the $L^2$-inner product
\be\label{inner product torus}
\big(u,v\big)_{\Gamma(t)}:=\int_{\Gamma(t)}u(s)\,v(s)\,ds=
\int_{\mathbb{I}}u(s(\rho,t))\, v(s(\rho,t)) \partial_\rho s\,d\rho,\qquad \forall\;u,v\in L^2(\mathbb{I}).
\ee
Extension of \eqref{inner product torus} to $L^2(\mathbb{I})^2$  is straightforward.
Moreover, define the Sobolev spaces
\begin{eqnarray*}
&&\mathbb{K}:=H^1(\mathbb{I})=\left\{u:\mathbb{I}\rightarrow \mathbb{R} \ | \ u\in L^2(\mathbb{I}), \ \partial_\rho u\in L^2(\mathbb{I})\right\},\\
&&\mathbb{K}_p:=H_p^1(\mathbb{I})
=\{u\in H^1(\mathbb{I})\ |\  u(0)=u(1)\},\qquad
\mathbb{X}_p:=H_p^1(\mathbb{I})\times H_p^1(\mathbb{I}).
\end{eqnarray*}
In addition,
for a vector ${\bf v}=(v_1,v_2)^T\in {\mathbb R}^2$, we denote ${\bf v}^\perp\in {\mathbb R}^2$ as its perpendicular vector (rotation clockwise by
$\pi/2$) defined as
\begin{equation}\label{defperp}
{\bf v}^\perp:=(v_2,-v_1)^T=-J\,{\bf v},  \qquad \hbox{with}\quad
J=\left(\begin{array}{cc}
0 &-1\\
1 &0\\
\end{array}\right),
\end{equation}
which immediately implies that
\begin{equation}\label{perp2}
({\bf v}^\perp)^\perp=-J\,{\bf v}^\perp=J^2{\bf v}=-{\bf v},
 \qquad {\bf v}=(v_1,v_2)^T\in {\mathbb R}^2.
\end{equation}

Multiplying a test function $\varphi(\rho)\in \mathbb{K}_p$ to \eqref{eqn:original gamma(theta) aniso eq1} and then integrating over $\Gamma(t)$, integrating by parts,  noting $\partial_s\mu(0,t)=\partial_s\mu(1,t)$ and $\varphi(0)=\varphi(1)$,
we have
\begin{eqnarray}\label{varf1 continuous torus}
\Bigl(\partial_{t}\vec{X},\varphi\vec{n}
\Bigr)_{\Gamma(t)}&=&\Bigl(\vec{n}\cdot\partial_{t}\vec{X},
\varphi\Bigr)_{\Gamma(t)}=
\Bigl(\partial_{ss}\mu,\varphi\Bigr)_{\Gamma(t)}\nonumber\\
&=&-\Bigl(\partial_{s}\mu,
\partial_s\varphi\Bigr)_{\Gamma(t)}+\left.\left(\varphi\partial_{s}\mu\right)
\right|_{\rho=0}^{\rho=1}\nonumber\\
&=&-\Bigl(\partial_{s}\mu,
\partial_s\varphi\Bigr)_{\Gamma(t)}.
\end{eqnarray}

To get the variational formulation of \eqref{eqn:original gamma(theta) aniso eq2}, noticing $\kappa=-\partial_s\theta$ in \eqref{def of kappa by theta}, we have
\begin{equation}\label{dgammads}
\partial_s\gamma(\theta)=\gamma^\prime(\theta)\,\partial_s\theta=-\kappa\, \gamma^{\prime}(\theta),\qquad \partial_s\gamma^\prime(\theta)=
\gamma^{\prime\prime}(\theta)\, \partial_s\theta =-\kappa\, \gamma''(\theta), \qquad \theta\in[-\pi,\pi].
\end{equation}
Combining \eqref{normtang} and \eqref{perp2} with ${\bf v}={\boldsymbol \tau}$, and noticing
$\kappa\, \vec n=-\partial_{ss}\vec X$, we obtain
\begin{equation}\label{a temp relationship}
\kappa\,\partial_s \vec X=\kappa\,{\boldsymbol \tau}=-\kappa\,({\boldsymbol \tau}^\perp)^\perp=\kappa\,(-{\boldsymbol \tau}^\perp)^\perp=\kappa{\boldsymbol n}^\perp= -\partial_{ss} \vec X^{\perp},\quad \kappa \partial_s\vec X^{\perp}=-\kappa{\boldsymbol n}= \partial_{ss}\vec X.
\end{equation}
Plugging \eqref{dgammads} into \eqref{eqn:original gamma(theta) aniso eq2},
noting \eqref{a temp relationship}, we get
\begin{align}\label{xi vector gamma(theta) ver}
\mu\vec{n}=&-\left[\gamma(\theta)+\gamma''(\theta)\right]\partial_{ss}\vec X\nonumber\\
=&\partial_s(-\gamma(\theta)\partial_s\vec X)+\partial_s\gamma(\theta)\partial_s\vec X-\gamma''(\theta)\kappa \partial_s\vec X^{\perp}\nonumber\\
=&\partial_s(-\gamma(\theta)\partial_s\vec X)-\kappa\gamma'(\theta) \partial_s\vec X+\gamma''(\theta)\partial_s\theta \partial_{s}\vec X^{\perp}\nonumber\\
=&\partial_s(-\gamma(\theta)\partial_s\vec X)+\gamma'(\theta) \partial_{ss}\vec X^{\perp}+\partial_s\gamma'(\theta) \partial_{s}\vec X^{\perp}\nonumber\\
=&\partial_s\left(-\gamma(\theta) \partial_s \vec X+\gamma'(\theta) \partial_s \vec X^{\perp} \right).
\end{align}
Introducing the surface energy (density) matrix $G(\theta)$ as
\begin{equation}\label{gthta1}
	G(\theta)=\begin{pmatrix}
\gamma(\theta) & -\gamma'(\theta) \\
\gamma'(\theta) & \gamma(\theta)
\end{pmatrix},
\end{equation}
and noting \eqref{defperp} with ${\bf v}=\partial_s \vec X$, we have
\begin{equation}\label{usage of G}
-\gamma(\theta) \partial_s \vec X+\gamma'(\theta) \partial_s \vec X^{\perp}=-\left[\gamma(\theta) \partial_s \vec X-\gamma'(\theta) \partial_s \vec X^{\perp}\right]=-\left[\gamma(\theta)I_2 +\gamma'(\theta) J \right]\partial_s \vec X=-G(\theta)\partial_s \vec X,
\end{equation}
where $I_2$ is the $2\times 2$ identity matrix.
Substituting \eqref{usage of G} into \eqref{xi vector gamma(theta) ver}, we
obtain
\begin{equation}\label{munnew}
\mu\vec{n}=-\partial_s\left(G(\theta)\partial_s \vec X\right).
\end{equation}
Thus \eqref{eqn:original formulation gamma(theta)} (or
\eqref{anisotropic surface diffusion, original}-\eqref{def of mu, original}) is equivalent to the following conservative form:
\begin{subequations}
\label{eqn:original formulation gamma(theta)11}
\begin{numcases}{}
\label{eqn:original gamma(theta) aniso eq3}
\vec n\cdot \partial_t\vec X -\partial_{ss}\mu=0,\qquad 0<s<L(t),\qquad t>0,   \\
\label{eqn:original gamma(theta) aniso eq4}
\mu\vec{n}+\partial_s\left(G(\theta)\partial_s \vec X\right)=0.
\end{numcases}
\end{subequations}
Multiplying a test function $\boldsymbol{\omega}=(\omega_1,\omega_2)^T\in \mathbb{X}_p$ to \eqref{eqn:original gamma(theta) aniso eq2} and
then integrating over  $\Gamma(t)$, noticing \eqref{munnew} and
integrating by parts, noting $\theta(0,t)=\theta(1,t)$,
$G(\theta(0,t))=G(\theta(1,t))$, $\partial_s \vec X(0,t)=\partial_s \vec X(1,t)$ and $\boldsymbol{\omega}(0)=\boldsymbol{\omega}(1)$,
we get
\begin{eqnarray}\label{varf2 continuous torus}
\Bigl(\mu, \vec n\cdot\boldsymbol{\omega}\Bigr)_{\Gamma(t)}&=&\Bigl(\mu \vec n, \boldsymbol{\omega}\Bigr)_{\Gamma(t)}
=\Bigl(-\partial_s\left(G(\theta)\partial_s \vec X\right), \boldsymbol{\omega}\Bigr)_{\Gamma(t)}\nonumber\\
&=&\Bigl( G(\theta)\partial_s \vec X,\partial_s\boldsymbol{\omega}\Bigr)_{\Gamma(t)}+\left((- G(\theta)\partial_s \vec X)\cdot  \boldsymbol{\omega} \right)|_{\rho=0}^{\rho=1}\nonumber\\
&=&\Bigl( G(\theta)\partial_s \vec X,\partial_s\boldsymbol{\omega}\Bigr)_{\Gamma(t)}.
\end{eqnarray}

Combining \eqref{varf1 continuous torus} and \eqref{varf2 continuous torus}, we get a new and simple variational formulation for the anisotropic surface diffusion
\eqref{anisotropic surface diffusion, original}-\eqref{def of mu, original}
with the initial condition \eqref{init} as:  Given the initial curve $\Gamma(0):=\vec X(\rho,0)=\vec X_0(L_0\,\rho)\in \mathbb{X}_p$,  find the solution $\Gamma(t):=\vec X(\cdot, t)\in \mathbb{X}_p$ and $\mu(t)\in \mathbb{K}_p$ such that:
\begin{subequations}
\label{eqn:weak continuous torus}
\begin{align}
\label{eqn:weak1 continuous torus}
&\Bigl(\partial_{t}\vec{X},\varphi\vec{n}\Bigr)_{\Gamma(t)}+\Bigl(\partial_{s}\mu,
\partial_s\varphi\Bigr)_{\Gamma(t)}=0,\qquad\forall \varphi\in \mathbb{K}_p,\\[0.5em]
\label{eqn:weak2 continuous torus}
&\Bigl(\mu,\vec{n}\cdot\boldsymbol{\omega}\Bigr)_{\Gamma(t)}-\Bigl( G(\theta)\partial_s \vec X,\partial_s\boldsymbol{\omega}\Bigr)_{\Gamma(t)}= 0,\quad\forall\boldsymbol{\omega}\in\mathbb{X}_p.
\end{align}
\end{subequations}

\subsection{Area/mass conservation and energy dissipation}

Assume that the anisotropic surface energy $\gamma(\theta)\in C^1([-\pi,\pi])$ and $\gamma(-\pi)=\gamma(\pi)$, for the variational problem
\eqref{eqn:weak continuous torus}, we have

\begin{prop}[area/mass conservation and energy dissipation]
Let $\Bigl(\vec X(\cdot,~t),~\mu(\cdot,~t)\Bigr)\in \mathbb{X}_p\times \mathbb{K}_p $ be a solution of the variational problem
\eqref{eqn:weak continuous torus}. Then the area/mass $A(t)$ defined in \eqref{AtWct} is conserved and the total interfacial energy $W_c(t)$ defined in \eqref{AtWct} is dissipative, i.e. \eqref{eqn:massenergy2dd} is valid.
\end{prop}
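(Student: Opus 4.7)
The plan is to derive both conservation statements directly from the variational identities \eqref{eqn:weak1 continuous torus}--\eqref{eqn:weak2 continuous torus} by choosing carefully designed test functions and then matching each resulting identity against an independent computation of $\frac{d}{dt}A(t)$ and $\frac{d}{dt}W_c(t)$.

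For area conservation, I would take the constant test function $\varphi\equiv 1\in\mathbb{K}_p$ in \eqref{eqn:weak1 continuous torus}. Since $\partial_s\varphi=0$, the dissipation term drops out and the identity collapses to $(\partial_t\vec X,\vec n)_{\Gamma(t)}=0$. Independently, starting from $A(t)=\int_{\mathbb{I}} y\,\partial_\rho x\,d\rho$ and differentiating under the integral, then integrating by parts in $\rho$ (which is legitimate by the periodicity $\vec X(0,t)=\vec X(1,t)$), one arrives at the standard kinematic identity $\frac{d}{dt}A(t)=\int_{\Gamma(t)}\vec n\cdot\partial_t\vec X\,ds$. Combining the two gives $\frac{d}{dt}A(t)=0$.

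For the energy, the strategy is to select $\varphi=\mu\in\mathbb{K}_p$ in \eqref{eqn:weak1 continuous torus} and $\boldsymbol{\omega}=\partial_t\vec X\in\mathbb{X}_p$ in \eqref{eqn:weak2 continuous torus}. These two equations share the common scalar $(\mu,\vec n\cdot\partial_t\vec X)_{\Gamma(t)}$, which can be eliminated to give
\[
\bigl(G(\theta)\partial_s\vec X,\,\partial_s\partial_t\vec X\bigr)_{\Gamma(t)}=-\bigl(\partial_s\mu,\partial_s\mu\bigr)_{\Gamma(t)}\le 0.
\]
The remaining task, which is really the only substantive obstacle in the proof, is to identify the left-hand side with $\frac{d}{dt}W_c(t)$.

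For this identification I would start from $W_c(t)=\int_{\mathbb{I}}\gamma(\theta)\,|\partial_\rho\vec X|\,d\rho$, and differentiate in $t$. Two contributions arise, one from $|\partial_\rho\vec X|$ and one from $\theta$. Using the elementary identities $\partial_t|\partial_\rho\vec X|=|\partial_\rho\vec X|\,\boldsymbol{\tau}\cdot\partial_s\partial_t\vec X$ and $\partial_t\theta=\vec n\cdot\partial_s\partial_t\vec X$ (the second obtained by differentiating $\boldsymbol{\tau}=\partial_\rho\vec X/|\partial_\rho\vec X|$ in $t$ and projecting onto $\vec n$, with the tangential projection cancelling because $\vec n\cdot\boldsymbol{\tau}=0$), one gets
\[
\frac{d}{dt}W_c(t)=\bigl(\gamma(\theta)\boldsymbol{\tau}+\gamma'(\theta)\vec n,\,\partial_s\partial_t\vec X\bigr)_{\Gamma(t)}.
\]
A direct $2\times 2$ matrix calculation, analogous to the one performed in \eqref{usage of G}, verifies against the definition \eqref{gthta1} that $\gamma(\theta)\boldsymbol{\tau}+\gamma'(\theta)\vec n=G(\theta)\partial_s\vec X$, which closes the identification. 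Energy dissipation $\frac{d}{dt}W_c(t)\le 0$ then follows, and integrating the two differential identities on $[0,t]$ and $[t_1,t]$ delivers the full statement \eqref{eqn:massenergy2dd}.
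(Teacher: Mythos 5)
Your proposal is correct and follows essentially the same route as the paper: area conservation via the test function $\varphi\equiv 1$ combined with the kinematic identity $\frac{d}{dt}A(t)=\bigl(\partial_t\vec X,\vec n\bigr)_{\Gamma(t)}$, and energy dissipation via $\varphi=\mu$, $\boldsymbol{\omega}=\partial_t\vec X$ together with the identification $\frac{d}{dt}W_c(t)=\bigl(G(\theta)\partial_s\vec X,\partial_s\partial_t\vec X\bigr)_{\Gamma(t)}$, where your identities $\partial_t\theta=\vec n\cdot\partial_s\partial_t\vec X$ and $G(\theta)\partial_s\vec X=\gamma(\theta)\boldsymbol{\tau}+\gamma'(\theta)\vec n$ are exactly the paper's \eqref{dWdt continuous pre1} and \eqref{usage of G} in slightly different notation. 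No gaps.
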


\begin{proof} Differentiating $A(t)$ defined in \eqref{AtWct} with respect to $t$, integrating by parts, we get
\begin{eqnarray}\label{dAdt continuous}
\frac{d}{dt}A(t)&=&\frac{d}{dt}\int_0^{L( t)}y( s,t)\partial_s x(s,t)\;ds\nonumber\\
& = &\frac{d}{dt}\int_0^1 y(\rho,t)\partial_\rho x(\rho,t)\;d\rho = \int_0^1 (\partial_t y \partial_\rho x + y \partial_t\partial_\rho x)\;d\rho \nonumber\\
&=& \int_0^1 (\partial_t y \partial_\rho x - \partial_\rho y\partial_t x)d\rho +(y\partial_t x)\Big|_{\rho = 0}^{\rho = 1}\nonumber\\
&=& \int_{\Gamma(t)}(\partial_t\vec X)\cdot\vec n\;ds=\Bigl(\partial_{t}\vec{X},\vec{n}\Bigr)_{\Gamma(t)}, \qquad t\ge0.
\end{eqnarray}
Taking $\varphi\equiv 1$ in \eqref{eqn:weak1 continuous torus}, we have
\begin{equation}\label{phi=1 continuous}	\Bigl(\partial_{t}\vec{X},\vec{n}\Bigr)_{\Gamma(t)}=-\Bigl(\partial_{s}\mu,
\partial_s1\Bigr)_{\Gamma(t)}=-\Bigl(\partial_{s}\mu,
0\Bigr)_{\Gamma(t)}=0,\qquad t\ge0.
\end{equation}
Inserting \eqref{phi=1 continuous} into \eqref{dAdt continuous}, we obtain
\begin{equation}
\frac{d}{dt}A(t)=0, \qquad t\ge0,
\end{equation}
which immediately implies the area/mass conservation in \eqref{eqn:massenergy2dd}.

Similar to \eqref{def of kappa by theta}, we obtain
\begin{equation}\label{dWdt continuous pre1}
\partial_t\theta=(\sin^2\theta+\cos^2\theta)\partial_t\theta
=-\partial_{s}\partial_tx\,\partial_s y+\partial_{s}\partial_ty\, \partial_s x=-(\partial_{s}\partial_t\vec X)\cdot (\partial_s\vec X^{\perp}).
\end{equation}
Differentiating $W_c(t)$ defined in \eqref{AtWct} with respect to $t$, noting \eqref{dWdt continuous pre1}, we get
\begin{eqnarray}\label{dGamma(t) continuous gamma(theta)}
\frac{d}{dt}W_c(t)&=&\frac{d}{dt}\int_0^{L(t)}\gamma(\theta)ds
=\frac{d}{dt}\int_0^1
\gamma(\theta)\partial_\rho s\,d\rho\nonumber\\
	&=&\int_0^1\left(\gamma(\theta)\partial_{t}\partial_\rho s+\gamma'(\theta)\partial_t\theta\partial_\rho s\right)d\rho\nonumber\\
	&=&\int_0^1\left(\gamma(\theta)\partial_s\vec X\cdot \partial_s\partial_t\vec X-\gamma'(\theta)\partial_s\vec X^{\perp}\cdot \partial_s\partial_t\vec X\right)\partial_\rho s\,d\rho\nonumber\\
	&=&\Bigl( G(\theta)\partial_s \vec X,\partial_s\partial_t\vec X\Bigr)_{\Gamma(t)},\qquad t\ge0.
\end{eqnarray}
Taking the test functions  $\varphi=\mu$ in
\eqref{eqn:weak1 continuous torus} and $\boldsymbol{\omega}=\partial_t\vec X$ in \eqref{eqn:weak2 continuous torus}, we obtain
\begin{equation}\label{phi=mu continuous} \Bigl(\partial_{t}\vec{X},\mu\vec{n}\Bigr)_{\Gamma(t)}=-\Bigl(\partial_{s}\mu,
\partial_s\mu\Bigr)_{\Gamma(t)}, \qquad
\Bigl( G(\theta)\partial_s \vec X,\partial_s\partial_t\vec X\Bigr)_{\Gamma(t)}=\Bigl(\mu\vec{n},\partial_t\vec X\Bigr)_{\Gamma(t)},\qquad t\ge0.
\end{equation}
Substituting \eqref{phi=mu continuous} into
\eqref{dGamma(t) continuous gamma(theta)}, we have
\begin{eqnarray}\label{dWdt continuous}
\frac{d}{dt}W_c(t)&=&\Bigl( G(\theta)\partial_s \vec X,\partial_s\partial_t\vec X\Bigr)_{\Gamma(t)}
=\Bigl(\mu\vec{n},\partial_t\vec X\Bigr)_{\Gamma(t)}\nonumber\\
&=&-\Bigl(\partial_{s}\mu,
\partial_s\mu\Bigr)_{\Gamma(t)}\leq 0,\qquad t\ge0,
\end{eqnarray}
which immediately implies the energy dissipation in \eqref{eqn:massenergy2dd}.
\end{proof}

\section{A semi-discretization by PFEM and its properties}

In this section, we present a parametric finite element method (PFEM) with
conforming piecewise linear  elements to discretize the variational problem \eqref{eqn:weak continuous torus} and show that the semi-discretization
conserves area/mass and keeps energy dissipation.

\subsection{The semi-discretization in space}

Let $N>2$ be a positive integer and $h=1/N$ be the mesh size,
denote the grid points $\rho_{j}=jh$ for $j=0,1,\ldots,N$, and subintervals
$I_j=[\rho_{j-1},\rho_{j}]$ for $j=1,2,\ldots,N$. Then
a uniform partition of the interval $\mathbb{I}$ is given as
$\mathbb{I}=[0,1]=\bigcup_{j=1}^{N}I_j$. Introduce
the finite element subspaces
 \begin{eqnarray*}\label{eqn:H1 semi torus}
&&\mathbb{K}^h:=\{u^h\in C(\mathbb{I})\ |\ u^h\mid_{I_{j}}\in \mathcal{P}_1,\ \forall j=1,2,\ldots,N\}\subset \mathbb{K}, \\
&&\mathbb{K}_p^h:=\{u^h\in \mathbb{K}^h \ |\ u(0)=u(1)\}\subset \mathbb{K}_p,\qquad
\mathbb{X}_p^h:=\mathbb{K}_p^h\times \mathbb{K}_p^h\subset \mathbb{X}_p,
\end{eqnarray*}
where $\mathcal{P}_1$ denotes the space of all polynomials with degree at most $1$.

Let $\Gamma^h(t):=\vec{X}^h(\cdot,t)\in \mathbb{X}_p^h$ and
$\mu^h(t)\in \mathbb{K}_p^h$ be the numerical approximations of the closed curve $\Gamma(t):=\vec{X}(\cdot,t)\in \mathbb{X}_p$ and $\mu(\cdot,t)\in \mathbb{K}_p$, respectively, which is the solution of the variational problem
\eqref{eqn:weak continuous torus}. In fact, for $t\ge0$, the piecewise linear curve $\Gamma^h(t)$ is composed by ordered line segments $\{\vec h_j(t)\}_{j=1}^N$ and we  always assume that they satisfy
\begin{equation}\label{hjt987}
h_{\rm min}(t):=\min_{1\le j\le N} |\vec h_j(t)|>0, \quad \hbox{with}
\quad \vec h_j(t):=\vec X^h(\rho_j,t)-\vec X^h(\rho_{j-1},t), \quad j=1,2,\ldots, N,
\end{equation}
where $|\vec h_j(t)|$ denotes the length of the vector $\vec h_j(t)$ for $j=1,2,\ldots,N$. With the piecewise linear elements, it is easy to see that the unit tangential vector $\boldsymbol{\tau}^h$, the outward unit normal vector $\vec n^h$ and the inclination angle $\theta^h$ of the curve $\Gamma^h(t)$ are constant vectors/scalars on each interval $I_j$ with possible discontinuities or jumps at nodes $\rho_j$. In fact, for $1\leq j\leq N$, the two vectors $\boldsymbol{\tau}^h,\, \vec n^h$ on each interval $I_j$ can be computed as
\begin{equation}
\boldsymbol{\tau}^h|_{I_j}=\frac{\vec h_j}{|\vec h_j|}:=\boldsymbol{\tau}^h_j,\qquad
\vec{n}^h|_{I_j}
=-({\boldsymbol{\tau}}^h_j)^{\perp}=-\frac{(\vec h_j)^\perp}{|\vec h_j|}:=\vec{n}^h_j;
\label{eqn:semitannorm}
\end{equation}
and the angle $\theta^h$ on each interval $I_j$ is
\begin{equation}\label{semidiscretized theta}
\theta^h|_{I_j}:=\theta^h_j,\quad\hbox{satisfying}\quad  \cos\theta^h_j=\frac{h_{j,x}}{|\vec h_j|},\,\quad \sin\theta^h_j=\frac{h_{j,y}}{|\vec h_j|},\qquad \hbox{with}
\quad \vec h_j=
(h_{j,x},h_{j,y})^T.
\end{equation}
Furthermore, for two piecewise linear scalar (or vector) functions $u$ and $v$ defined on  $\mathbb{I}$ with possible jumps at the nodes $\{\rho_j\}_{j=0}^{N}$, we can define the mass lumped inner product $\big(\cdot,\cdot\big)_{\Gamma^h(t)}^h$ over $\Gamma^h(t)$ as
\begin{equation}
\big(u,~v\big)_{\Gamma^h(t)}^h:=\frac{1}{2}\sum_{j=1}^{N}|\vec h_j|\,\Big[\big(u\cdot v\big)(\rho_j^-)+\big(u\cdot v\big)(\rho_{j-1}^+)\Big],
\label{eqn:semiproduct}
\end{equation}
where $u(\rho_j^\pm)=\lim\limits_{\rho\to \rho_j^\pm} u(\rho)$ for $0\le j\le N$.

Let $\Gamma^h(0):=\vec{X}^h(\rho,0)\in \mathbb{X}_p^h$ be an interpolation
of the initial curve $\vec{X}_0(s)$ in \eqref{init} satisfying $\vec{X}_0(0)
=\vec{X}_0(L_0)$, which is defined as
$\vec{X}^h(\rho=\rho_j,0)=\vec{X}_0(s=s_j^0)$ with $s_j^0=L_0\rho_j$ for $j=0,1,\ldots,N$. Then a semi-discretization in space of the variational formulation \eqref{eqn:weak continuous torus} can be given as: Take $\Gamma^h(0)=\vec X^h(\cdot,0) \in \mathbb{X}_p^h$, find the closed curve $\Gamma^h(t):=\vec X^h(\cdot,t)=(x^h(\cdot,t),y^h(\cdot,t))^T\in \mathbb{X}_p^h$ and the weighted mean curvature $\mu^h(\cdot,~t)\in \mathbb{K}_p^h$, such that
\begin{subequations}
\label{eqn:2dsemi torus}
\begin{align}
\label{eqn:2dsemi1 torus}
&\Bigl(\partial_{t}\vec{X}^h,\varphi^h\vec{n}^h\Bigr)_{\Gamma^h(t)}^h+\Bigl(\partial_{s}\mu^h,
\partial_s\varphi^h\Bigr)_{\Gamma^h(t)}^h=0,\qquad\forall \varphi^h\in \mathbb{K}_p^h,\\[0.5em]\
\label{eqn:2dsemi2 torus}
&\Bigl(\mu^h,\vec{n}^h\cdot\boldsymbol{\omega}^h\Bigr)_{\Gamma^h(t)}^h-\Bigl( G(\theta^h)\partial_s \vec X^h,\partial_s\boldsymbol{\omega}^h\Bigr)_{\Gamma^h(t)}^h=0,
\quad\forall\boldsymbol{\omega}^h\in\mathbb{X}_p^h.
\end{align}
\end{subequations}

\subsection{Area/mass conservation and energy dissipation}

Let $A^h(t)$ be the area/mass of the region enclosed
by the closed curve $\Gamma^h(t)$ and $W^h_c(t)$ be its
total interfacial energy, which are defined as
\begin{equation}\label{discrete area}
A^h(t)=\frac{1}{2}\sum_{j=1}^N(x_j(t)-x_{j-1}(t))(y_j(t)+y_{j-1}(t)),
\qquad W^h_c(t)=\sum_{j=1}^N|\vec h_j(t)|\,\gamma(\theta_j^h),
\end{equation}
where $\vec X_j(t)=(x_j(t),y_j(t))^T:=\vec X^h(\rho_j,t)$ for $j=0,1,\ldots,N$.
For the semi-discretization  \eqref{eqn:2dsemi torus}, we have
\begin{prop}[area/mass conservation and energy dissipation]
Let $\Bigl(\vec X^h(\cdot,~t),~\mu^h(\cdot,~t)\Bigr)\in \mathbb{X}_p^h\times
\mathbb{K}_p^h$ be a solution of the semi-dicsretization
\eqref{eqn:2dsemi torus}. Then the area/mass $A^h(t)$ in
\eqref{discrete area} is conserved,  i.e.
\begin{equation}\label{massvp semi}
A^h(t)\equiv A^h(0)=\frac{1}{2}\sum_{j=1}^N[x_0(s_j^0)-
x_0(s_{j-1}^0)][y_0(s_j^0)+y_0(s_{j-1}^0)],\qquad t\geq 0;
\end{equation}
and the total interfacial energy $W^h_c(t)$ in
\eqref{discrete area} is dissipative, i.e.
\begin{equation}\label{massvp semieng}
W^h_c(t)\leq W^h_c(t_1)\leq W^h_c(0) = \sum_{j=1}^N|\vec h_j(0)|\,\gamma(\theta_j^h(0)),\qquad t\geq t_1\geq 0.
\end{equation}
\end{prop}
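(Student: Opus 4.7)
The plan is to mirror the continuous proof of the earlier proposition, replacing integration by parts and the chain rule with their discrete counterparts appropriate for piecewise linear geometry and the mass-lumped inner product. The two test-function choices are the same: $\varphi^h\equiv 1$ in \eqref{eqn:2dsemi1 torus} for area/mass conservation, and the pair $\varphi^h=\mu^h$ in \eqref{eqn:2dsemi1 torus}, $\boldsymbol{\omega}^h=\partial_t\vec X^h$ in \eqref{eqn:2dsemi2 torus} for energy dissipation. Note that $\partial_t\vec X^h\in \mathbb{X}_p^h$ because the nodal positions move while the parametric nodes $\rho_j$ are fixed, so the time derivative of a piecewise linear, periodic function is again piecewise linear and periodic.

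For area/mass conservation, the key geometric identity I need is the discrete analogue of \eqref{dAdt continuous}, namely
\begin{equation*}
\frac{d}{dt}A^h(t)=\Bigl(\partial_{t}\vec X^h,\vec{n}^h\Bigr)^h_{\Gamma^h(t)}.
\end{equation*}
I would establish this by differentiating the shoelace formula \eqref{discrete area} directly, collecting the coefficient of each nodal velocity $\dot{\vec X}_k$, and recognizing the resulting vector $\tfrac12(y_{k-1}-y_{k+1},\,x_{k+1}-x_{k-1})^T$ as $\tfrac12\bigl[|\vec h_k|\vec n^h_k+|\vec h_{k+1}|\vec n^h_{k+1}\bigr]$ via the relation $|\vec h_j|\vec n^h_j=-\vec h_j^{\perp}$; that sum is exactly the mass-lumped inner product on the right, because on each $I_j$ the constant vector $\vec n^h_j$ meets the two nodal values $\dot{\vec X}_{j-1},\dot{\vec X}_j$ at the endpoints. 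Substituting $\varphi^h\equiv 1$ in \eqref{eqn:2dsemi1 torus} then forces the right-hand side to vanish (since $\partial_s1=0$), which gives \eqref{massvp semi} after integration in $t$.

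For energy dissipation, the central step is to prove the discrete chain-rule identity
\begin{equation*}
\frac{d}{dt}W_c^h(t)=\Bigl(G(\theta^h)\partial_s\vec X^h,\partial_s\partial_t\vec X^h\Bigr)^h_{\Gamma^h(t)}.
\end{equation*}
I would differentiate $|\vec h_j|\gamma(\theta^h_j)$ term by term, using the elementary facts
\begin{equation*}
\frac{d|\vec h_j|}{dt}=\boldsymbol\tau^h_j\cdot\dot{\vec h}_j,\qquad
\frac{d\theta^h_j}{dt}=\frac{\vec n^h_j\cdot\dot{\vec h}_j}{|\vec h_j|},
\end{equation*}
to obtain $\frac{d}{dt}(|\vec h_j|\gamma(\theta^h_j))=\bigl[\gamma(\theta^h_j)\boldsymbol\tau^h_j+\gamma'(\theta^h_j)\vec n^h_j\bigr]\cdot\dot{\vec h}_j$. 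The bracketed vector is precisely $G(\theta^h_j)\boldsymbol\tau^h_j$ by the definition \eqref{gthta1} and the identity $\vec n=-\boldsymbol\tau^{\perp}$. On the other hand, on $I_j$ one has $\partial_s\vec X^h=\boldsymbol\tau^h_j$ (constant) and $\partial_s\partial_t\vec X^h=\dot{\vec h}_j/|\vec h_j|$ (constant), so the mass-lumped inner product collapses to $\sum_j G(\theta^h_j)\boldsymbol\tau^h_j\cdot\dot{\vec h}_j$, matching the time derivative of $W_c^h$.

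Once these two identities are in hand, combining the two test-function choices gives
\begin{equation*}
\frac{d}{dt}W_c^h(t)=\bigl(\mu^h\vec n^h,\partial_t\vec X^h\bigr)^h_{\Gamma^h(t)}=-\bigl(\partial_s\mu^h,\partial_s\mu^h\bigr)^h_{\Gamma^h(t)}\le 0,
\end{equation*}
whence \eqref{massvp semieng} follows. The main obstacle is the discrete area identity in the first step: unlike in the continuous setting, the rearrangement from nodal velocity coefficients into a sum over intervals weighted by $|\vec h_j|\vec n^h_j$ requires careful bookkeeping of indices and relies crucially on the periodicity $\vec X_0=\vec X_N$; the energy computation, while algebraically intricate, follows a more routine pattern once the geometric derivatives $d|\vec h_j|/dt$ and $d\theta^h_j/dt$ are in place.
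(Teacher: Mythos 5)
Your proposal is correct and follows essentially the same route as the paper: the energy-dissipation part reproduces the paper's computation (the derivatives of $|\vec h_j|$ and $\theta_j^h$, the identification of $\gamma(\theta_j^h)\boldsymbol{\tau}_j^h+\gamma'(\theta_j^h)\vec n_j^h$ with $G(\theta_j^h)\partial_s\vec X^h|_{I_j}$, and the test functions $\varphi^h=\mu^h$, $\boldsymbol{\omega}^h=\partial_t\vec X^h$), while for area/mass conservation you simply spell out the shoelace/nodal-velocity bookkeeping that the paper delegates to a cited reference. Both steps check out.
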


\begin{proof}
The area/mass conservation \eqref{massvp semi} of the semi-discretization \eqref{eqn:2dsemi torus} can be proved similar to those in  \citep[Proposition 2.1]{bao2017parametric} and thus the details
are omitted here for brevity.

Similar to the proof of \eqref{dWdt continuous pre1}, noting
\eqref{def of kappa by theta} and \eqref{semidiscretized theta}, we obtain
\begin{equation}\label{dWdt semi pre1}	\dot{\theta}_j^h(t)=\left(\sin^2(\theta_j^h)+\cos^2(\theta_j^h)\right)
\dot{\theta}_j^h(t)=\frac{-\dot{h}_{j,x}\,h_{j,y}+\dot{h}_{j,y}\,h_{j,x}}{|\vec h_j|^2}=-\frac{\vec h_j^{\perp}\cdot\dot{\vec h}_j }{|\vec h_j|^2}.
\end{equation}
Differentiating $W^h_c(t)$ in \eqref{discrete area} with respect to $t$,
noticing \eqref{usage of G} and \eqref{dWdt semi pre1}, we get
\begin{eqnarray}\label{dGamma(t) semi gamma(theta)}
\frac{d}{dt}W^h_c(t)&=&\frac{d}{dt}\left(\sum_{j=1}^N|\vec h_j(t)|\,\gamma(\theta_j^h)\right)=
\sum_{j=1}^N\left(\gamma(\theta_j^h)\frac{d}{dt}|{\vec h}_j(t)|+\gamma'(\theta_j^h)\,\dot{\theta}_j^h\,|\vec h_j(t)|\right)\nonumber\\
&=&\sum_{j=1}^N\left(\gamma(\theta^h_j)\frac{\vec h_j(t)\cdot \dot{\vec{h}}_j(t)}{|\vec h_j(t)|}-\gamma'(\theta^h_j)\frac{\vec h_j(t)^{\perp}\cdot \dot{\vec h}_j(t)}{|\vec h_j(t)|}\right)\nonumber\\
&=&\sum_{j=1}^N|\vec h_j(t)|\left(\gamma(\theta^h_j)\frac{\vec h_j(t)}{|\vec h_j(t)|}-\gamma'(\theta^h_j)\frac{\vec h_j(t)^{\perp}}{|\vec h_j(t)|}\right)\cdot \frac{\dot{\vec{h}}_j(t)}{|\vec h_j(t)|}\nonumber\\
&=&\sum_{j=1}^N|\vec h_j(t)|\, \left(G(\theta^h)\frac{\vec h_j(t)}{|\vec h_j(t)|}\right)\cdot \frac{\dot{\vec{h}}_j(t)}{|\vec h_j(t)|}\nonumber\\
&=&\sum_{j=1}^N|\vec h_j(t)|\, \left(G(\theta^h)\left.\partial_s \vec X^h\right|_{I_j}\right)\cdot \left(\left.\partial_s \partial_t\vec X^h\right) \right|_{I_j} \nonumber\\
&=&\Bigl( G(\theta^h)\partial_s \vec X^h,\partial_s\partial_t\vec X^h\Bigr)_{\Gamma^h(t)}^h.
\end{eqnarray}
Here we use the following equalities
\begin{equation}
\left.\partial_s \vec X^h\right|_{I_j}=\frac{\vec h_j(t)}{|\vec h_j(t)|},
\qquad \left.\partial_s \partial_t\vec X^h\right|_{I_j}=\frac{1}{|\vec h_j(t)|}\left. \partial_t\vec X^h\right|_{I_j}=\frac{\dot{\vec{h}}_j(t)}{|\vec h_j(t)|}, \qquad 1\le j\le N.
\end{equation}
Choosing the test functions $\varphi^h=\mu^h $ in \eqref{eqn:2dsemi1 torus} and $\boldsymbol{\omega}^h=\partial_t\vec X^h$ in \eqref{eqn:2dsemi2 torus}, we have
\begin{equation}\label{phi=mu semi-discretization}
\Bigl(\partial_t\vec X^h,~\mu^h\vec n^h\Bigr)_{\Gamma^h(t)}^h =- \Bigl(\partial_s\mu^h,~\partial_s\mu^h\Bigr)_{\Gamma^h(t)}^h, \qquad
\Bigl( G(\theta^h)\partial_s \vec X^h,\partial_s\partial_t\vec X^h\Bigr)_{\Gamma^h(t)}^h=\Bigl(\mu^h\vec{n}^h,\partial_t\vec X^h\Bigr)_{\Gamma^h(t)}^h.
\end{equation}
Substituting \eqref{phi=mu semi-discretization} into
\eqref{dGamma(t) semi gamma(theta)}, we get
\begin{eqnarray}\label{dWdt semi-discretization}
\frac{d}{dt}W^h_c(t)&=&\Bigl( G(\theta^h)\partial_s \vec X^h,\partial_s\partial_t\vec X^h\Bigr)_{\Gamma^h(t)}^h\nonumber\\
&=&\Bigl(\mu^h\vec{n}^h,\partial_t\vec X^h\Bigr)_{\Gamma^h(t)}^h
=\Bigl(\partial_t\vec X^h,~\mu^h\vec n^h\Bigr)_{\Gamma^h(t)}^h\nonumber\\
&=&-\Bigl(\partial_s\mu^h,~\partial_s\mu^h\Bigr)_{\Gamma^h(t)}^h\leq 0,\qquad t\ge0,
\end{eqnarray}
which immediately implies the energy dissipation in \eqref{massvp semieng}.
\end{proof}

\section{An energy-stable PFEM  and its properties}

In this section, we further discretize the semi-discretization \eqref{eqn:2dsemi torus} in time by a semi-implicit backward Euler method
to obtain a full-discretization of the variational problem
\eqref{eqn:weak continuous torus} (and thus of the original problem \eqref{eqn:original formulation gamma(theta)} or
\eqref{anisotropic surface diffusion, original}-\eqref{def of mu, original}),
establish its well-posedness and investigate some simple conditions
on $\gamma(\theta)$ such that the full-discretization is energy dissipative.

\subsection{The full-discretizition}

Take $\tau>0$ as the time step size and denote $t_m=m\tau$ the discrete time levels for $m=0,1,\ldots$ . For each $m\geq 0$, let $\Gamma^m:=\vec X^m(\rho)=(x^m(\rho),y^m(\rho))^T\in \mathbb{X}_p^h$ and $\mu^m\in \mathbb{K}_p^h$ be  the approximations of $\Gamma^h(t_m)=\vec X^h(\rho,t_m)$
and $\mu^h(t_m)\in \mathbb{K}_p^h$, respectively, which is
the solution of the semi-discretization \eqref{eqn:2dsemi torus}. Similarly, $\Gamma^m$ is composed by segments
$\{\vec h^m_j\}_{j=1}^N$ defined as
\begin{equation}
\vec h^m_j=(h_{j,x}^m,h_{j,y}^m)^T:=\vec X^m(\rho_j)-\vec X^m(\rho_{j-1}), \qquad  j=1,2,\ldots,N.
\end{equation}
Again, the unit tangential vector $\boldsymbol{\tau}^m$, the outward unit normal vector $\vec n^m$ and the inclination angle $\theta^m$ of the curve $\Gamma^m$ are constant vectors/scalars on each interval $I_j$ with possible discontinuities or jumps at nodes $\rho_j$ ($j=0,1,\ldots,N$). The two vectors $\boldsymbol{\tau}^m$ and $\vec n^m$ on interval $I_j$ can be computed as
\begin{equation}
\boldsymbol{\tau}^m|_{I_j}=\frac{\vec h_j^m}{|\vec h_j^m|}:=\boldsymbol{\tau}^m_j,\qquad
\vec{n}^m|_{I_j}
=-({\boldsymbol{\tau}}^m_j)^{\perp}=-\frac{(\vec h_j^m)^\perp}{|\vec h_j^m|}:=\vec{n}^m_j,\qquad 1\leq j\leq N,
\label{tangent and normal vector full case 1}
\end{equation}
and the angle $\theta^m$ on each interval $I_j$ is given as
\begin{equation}\label{discretized theta}
	\theta^m|_{I_j}:=\theta^m_j,\,\quad \hbox{satisfying}\quad \cos\theta^m_j=\frac{h_{j,x}^m}{|\vec h_j^m|},\,\quad\sin\theta^m_j=\frac{h_{j,y}^m}{|\vec h_j^m|}, \qquad 1\le j\le N.
\end{equation}

 Then an energy-stable PFEM ({\bf ES-PFEM}) to discretize the semi-discretization
\eqref{eqn:2dsemi torus} is to adapt a semi-implicit backward Euler method
in time and is give as:   Take $\Gamma^0:=\Gamma^h(0)\in \mathbb{X}_p^h$,
for $m\ge0$, find a closed curve $\Gamma^{m+1}:=\vec X^{m+1}(\cdot)=(x^{m+1}(\cdot),y^{m+1}(\cdot))^T\in \mathbb{X}_p^h$ and a weighted mean curvature $\mu^{m+1}(\cdot)\in \mathbb{K}_p^h$, such that
\begin{subequations}
\label{eqn:full discretization gamma(theta) torus}
\begin{align}
\label{eqn:full discretization gamma(theta) torus eq1}
&\Bigl(\frac{\vec X^{m+1}-\vec X^m}{\tau},~\varphi^h\vec n^m\Bigr)_{\Gamma^m}^h + \Bigl(\partial_s\mu^{m+1},~\partial_s\varphi^h\Bigr)_{\Gamma^m}^h=0,\qquad\forall \varphi^h\in \mathbb{K}^h_p,\\[0.5em]
\label{eqn:full discretization gamma(theta) torus eq2}
&\Bigl(\mu^{m+1},\vec n^m\cdot\boldsymbol{\omega}^h\Bigr)_{\Gamma^m}^h-\Bigl(G(\theta^m)\partial_s \vec X^{m+1},~\partial_s\boldsymbol{\omega}^h\Bigr)_{\Gamma^m}^h=0,\quad\forall\boldsymbol{\omega}^h\in\mathbb{X}^h_p.
\end{align}
\end{subequations}

 The above ES-PFEM is semi-implicit, i.e. only a linear system needs
to be solved at each time step, and thus it is very efficient.

\subsection{Well-posedness}

Assume $N\ge3$ and denote
\begin{equation}
\widetilde{\vec h}_{N-1}^m:=\vec X^m(\rho_{N-1})-\vec X^m(\rho_{1}),\quad  \widetilde{\vec h}_j^m:=\vec X^m(\rho_{j+1})-\vec X^m(\rho_{j-1}), \quad j=1,2,\ldots, N-2, \quad m\ge0.
\end{equation}
For the well-posedness of the full discretization ES-PFEM
\eqref{eqn:full discretization gamma(theta) torus}, we have

\begin{thm}[Well-posedness] For each $m\ge0$, assume that the following two conditions are satisfied

(i) at least two vectors in $\{\widetilde{\vec h}_j^m\}_{j=1}^{N-1}$ are not
parallel, i.e. there exists $1\le j_1<j_2\le N-1$ such that
\begin{equation}
\widetilde{\vec h}_{j_1}^m \cdot (\widetilde{\vec h}_{j_2}^m)^\perp\ne 0,
\end{equation}

(ii) no degenerate vertex on $\Gamma^m$, i.e.
\begin{equation}
h^m_{\rm min}:=\min_{1\le j\le N} |\vec h^m_j|=\min_{1\le j\le N} |\vec X^m(\rho_{j+1})-\vec X^m(\rho_{j})|\ >0.
\label{eqn:assumption}
\end{equation}
Then the full-discretization
\eqref{eqn:full discretization gamma(theta) torus} is well-posed, i.e.,
there exists a unique solution
$\bigl(\vec X^{m+1},~\kappa^{m+1}\bigr)\in \mathbb{X}^h_p\times \mathbb{K}^h_p$ of the problem
\eqref{eqn:full discretization gamma(theta) torus}.
\end{thm}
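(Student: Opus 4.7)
The plan is to exploit the finite-dimensionality of $\mathbb{X}_p^h\times\mathbb{K}_p^h$: since \eqref{eqn:full discretization gamma(theta) torus} is a square linear system (whose coefficient matrix depends only on the data at $\Gamma^m$, which is non-degenerate by hypothesis (ii)), it suffices to prove uniqueness, i.e.\ that the only pair $(\vec Y,\lambda)\in\mathbb{X}_p^h\times\mathbb{K}_p^h$ solving the homogeneous version of \eqref{eqn:full discretization gamma(theta) torus} is $(\vec 0,0)$. I therefore fix such a pair and work to show $\vec Y\equiv\vec 0$ and $\lambda\equiv 0$.

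First, I mimic the energy-stability computation \eqref{phi=mu semi-discretization} by testing the first homogeneous equation with $\varphi^h=\lambda$ and the second with $\boldsymbol{\omega}^h=\vec Y$. The two resulting identities share the coupling term $(\lambda,\vec n^m\cdot\vec Y)_{\Gamma^m}^h=(\vec Y,\lambda\vec n^m)_{\Gamma^m}^h$, which can be eliminated between them to yield
\begin{equation*}
\frac{1}{\tau}\bigl(G(\theta^m)\partial_s\vec Y,\partial_s\vec Y\bigr)_{\Gamma^m}^h + \bigl(\partial_s\lambda,\partial_s\lambda\bigr)_{\Gamma^m}^h=0.
\end{equation*}
The crucial algebraic observation is that $\vec v^{T}J\vec v=0$ for every $\vec v\in\mathbb{R}^2$, so from \eqref{gthta1} one has $\vec v^{T}G(\theta)\vec v=\gamma(\theta)|\vec v|^2\ge 0$, with strict positivity whenever $\vec v\neq\vec 0$ because $\gamma>0$. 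Hence each of the two non-negative summands above must vanish, forcing $\partial_s\vec Y=\vec 0$ and $\partial_s\lambda=0$ on every sub-interval $I_j$; continuity and periodicity on $\mathbb{I}$ then imply $\vec Y\equiv\vec c$ for some $\vec c\in\mathbb{R}^2$ and $\lambda\equiv c_0$ for some $c_0\in\mathbb{R}$.

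Second, I use hypothesis (i) to eliminate these constants. Substituting $\partial_s\lambda=0$ back into the first homogeneous equation and expanding the mass-lumped inner product node by node yields, for every $j$,
\begin{equation*}
\vec c\cdot\bigl(|\vec h_j^m|\vec n_j^m+|\vec h_{j+1}^m|\vec n_{j+1}^m\bigr)=0.
\end{equation*}
Using $|\vec h_j^m|\vec n_j^m=J\vec h_j^m$ (from \eqref{eqn:semitannorm} together with \eqref{defperp}) and $\vec h_j^m+\vec h_{j+1}^m=\widetilde{\vec h}_j^m$, this is equivalent to $\vec c\cdot J\widetilde{\vec h}_j^m=0$ for all $j$. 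If $\vec c\neq\vec 0$, every $\widetilde{\vec h}_j^m$ would be orthogonal to the nonzero vector $J^{T}\vec c$ and hence mutually parallel, contradicting (i); therefore $\vec c=\vec 0$. Re-inserting $\vec Y=\vec 0$ into the second homogeneous equation and choosing $\boldsymbol{\omega}^h$ to be a nodal hat function at $\rho_j$ in each coordinate direction gives, analogously, $c_0\bigl(|\vec h_j^m|\vec n_j^m+|\vec h_{j+1}^m|\vec n_{j+1}^m\bigr)=\vec 0$ for every $j$; since (i) guarantees at least one nonzero $\widetilde{\vec h}_j^m$, I conclude $c_0=0$.

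The main obstacle is the translation in this second step from the variational identity $(\vec c,\varphi^h\vec n^m)_{\Gamma^m}^h=0$ into the purely geometric statement $\vec c\cdot J\widetilde{\vec h}_j^m=0$ that makes hypothesis (i) directly applicable; once this node-by-node expansion is performed, the remainder is straightforward linear algebra. Hypothesis (ii) plays a silent but essential role throughout: without it the edges $\vec h_j^m$, the unit normals $\vec n_j^m$, the angles $\theta_j^m$ and the matrix $G(\theta^m)$ are not even well-defined, so the system \eqref{eqn:full discretization gamma(theta) torus} would be vacuous.
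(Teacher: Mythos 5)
Your proof is correct, and its first half is essentially the paper's own argument: reduce well-posedness of the square finite-dimensional linear system to uniqueness for the homogeneous problem, test with $\varphi^h=\lambda$ and $\boldsymbol{\omega}^h=\vec Y$, eliminate the coupling term, and use condition (ii) together with $\vec v^{T}G(\theta)\vec v=\gamma(\theta)|\vec v|^2>0$ for $\vec v\neq\vec 0$ (you justify this via $\vec v^{T}J\vec v=0$, where the paper simply invokes positive definiteness of $G(\theta)$) to force $\partial_s\vec Y\equiv\vec 0$, $\partial_s\lambda\equiv 0$, hence $\vec Y\equiv\vec c$ and $\lambda\equiv c_0$. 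Where you genuinely diverge is the final step: the paper disposes of the constants by citing Theorem 2.1 of Barrett, Garcke and N\"urnberg \cite{barrett2007parametric}, whereas you prove the needed statement directly, expanding the mass-lumped products against nodal hat functions to obtain $\vec c\cdot J\widetilde{\vec h}_j^m=0$ and $c_0\,J\widetilde{\vec h}_j^m=\vec 0$, and then using (i). This makes the proof self-contained (and shows exactly where hypotheses (i) and (ii) enter), at the cost of reproving a special case of the cited result. One small index caveat: your identity $\vec h_j^m+\vec h_{j+1}^m=\widetilde{\vec h}_j^m$ is literally valid only for $1\le j\le N-2$, since the paper defines $\widetilde{\vec h}_{N-1}^m=\vec X^m(\rho_{N-1})-\vec X^m(\rho_1)$; but because the curve is closed, $\sum_{j=1}^N\vec h_j^m=\vec 0$ gives $\widetilde{\vec h}_{N-1}^m=-(\vec h_N^m+\vec h_1^m)$, so the nodal equation at $\rho_0=\rho_N$ supplies $\vec c\cdot J\widetilde{\vec h}_{N-1}^m=0$ as well and your argument goes through unchanged.
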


\begin{proof}
	We just need to prove the following homogeneous problem only has zero solution:
	\begin{subequations}
\label{eqn:full discretization gamma(theta) homo}
\begin{align}
\label{eqn:full discretization gamma(theta) homo eq1}
&\Bigl(\frac{\vec X^{m+1}}{\tau},~\varphi^h\vec n^m\Bigr)_{\Gamma^m}^h + \Bigl(\partial_s\mu^{m+1},~\partial_s\varphi^h\Bigr)_{\Gamma^m}^h=0,\qquad\forall \varphi^h\in \mathbb{K}^h_p,\\[0.5em]
\label{eqn:full discretization gamma(theta) homo eq2}
&\Bigl(\mu^{m+1},\vec n^m\cdot\boldsymbol{\omega}^h\Bigr)_{\Gamma^m}^h-\Bigl(G(\theta^m)\partial_s \vec X^{m+1},~\partial_s\boldsymbol{\omega}^h
\Bigr)_{\Gamma^m}^h=0,\qquad\forall\boldsymbol{\omega}^h\in\mathbb{X}^h_p.
\end{align}
\end{subequations}
Taking $\varphi^h=\mu^{m+1}$ in
\eqref{eqn:full discretization gamma(theta) homo eq1}, we get
\begin{equation}\label{phi=mu(m+1)}
	\Bigl(\frac{\vec X^{m+1}}{\tau},~\mu^{m+1}\vec n^m\Bigr)_{\Gamma^m}^h + \Bigl(\partial_s\mu^{m+1},~\partial_s\mu^{m+1}\Bigr)_{\Gamma^m}^h=0.
\end{equation}
Choosing $\boldsymbol{\omega}^h=\vec X^{m+1}$ in \eqref{eqn:full discretization gamma(theta) homo eq2}, we have
\begin{equation}\label{omega=X(m+1)}
\Bigl(\mu^{m+1},\vec n^m\cdot \vec X^{m+1}\Bigr)_{\Gamma^m}^h-\Bigl(G(\theta^m)\partial_s \vec X^{m+1},~\partial_s\vec X^{m+1}\Bigr)_{\Gamma^m}^h =0.
\end{equation}
Combining \eqref{phi=mu(m+1)} and \eqref{omega=X(m+1)}, noting
$G(\theta)$ is a positive definite matrix,  we obtain
\begin{eqnarray}\label{sum of squares, case gamma(theta)}
0&\le&\tau \Bigl(\partial_s\mu^{m+1},~\partial_s\mu^{m+1}
\Bigr)_{\Gamma^m}^h=-\Bigl(\vec X^{m+1},~\mu^{m+1}\vec n^m\Bigr)_{\Gamma^m}^h\nonumber \\
&=&-\Bigl(\mu^{m+1},\vec n^m\cdot \vec X^{m+1}\Bigr)_{\Gamma^m}^h\nonumber\\
&=&-\Bigl(G(\theta^m) \partial_s \vec X^{m+1},~\partial_s \vec X^{m+1}\Bigr)_{\Gamma^m}^h\le0.
\end{eqnarray}
Thus we have
\begin{equation}
\Bigl(\partial_s\mu^{m+1},~\partial_s\mu^{m+1}
\Bigr)_{\Gamma^m}^h=0, \qquad \Bigl(G(\theta^m) \partial_s \vec X^{m+1},~\partial_s \vec X^{m+1}\Bigr)_{\Gamma^m}^h=0,
\end{equation}
which yields
\begin{equation}
\label{Xs=0 gamma(theta)}
\partial_s\vec X^{m+1}\equiv \vec 0,\qquad
\partial_s\mu^{m+1}\equiv 0 \qquad \Rightarrow \qquad \vec X^{m+1}\equiv \vec X^c\in\mathbb{R}^2, \quad \mu^{m+1}\equiv \mu^c\in\mathbb{R}.
\end{equation}
Substituting \eqref{Xs=0 gamma(theta)} into
\eqref{eqn:full discretization gamma(theta) homo}, we obtain
\begin{subequations}
\label{Auxillary eq}
\begin{align}
\label{X=0 gamma(theta)}
&\Bigl(\frac{\vec X^c}{\tau},~\varphi^h\vec n^m\Bigr)_{\Gamma^m}^h=0,\qquad\forall \varphi^h\in \mathbb{K}^h_p,\\
\label{mu=0 gamma(theta)}
&\Bigl(\mu^{c},\vec n^m\cdot\boldsymbol{\omega}^h\Bigr)_{\Gamma^m}^h=0,
\qquad\forall\boldsymbol{\omega}^h\in\mathbb{X}^h_p.
\end{align}
\end{subequations}
Under the conditions (i) and (ii) and by using the Theorem 2.1 in \cite{barrett2007parametric}, we know that \eqref{Auxillary eq} implies $\mu^c=0$ and $\vec X^c=0$. Thus the homogeneous problem
\eqref{eqn:full discretization gamma(theta) homo} only has zero solution,
and thereby the original inhomogeneous linear system
\eqref{eqn:full discretization gamma(theta) torus} is well-posed, i.e. it has a unique solution.
\end{proof}

\subsection{Energy dissipation}

Define the total energy $W_c^m$ of the closed curve $\Gamma^m:=\vec X^{m}$ as
\begin{equation}\label{full-discretized energy gamma(theta) torus}
W_c^m:=W_c(\Gamma^m)=\sum_{j=1}^N|\vec h_j^m|\,\gamma(\theta_j^m), \quad m\ge0.
\end{equation}
We state a generic energy dissipation condition on $\gamma(\theta)\in C^1([-\pi,\pi])$ satisfying $\gamma(-\pi)=\gamma(\pi)$ as
%
\begin{equation}\label{desired condition gamma(theta)}
2\gamma(\theta)-\gamma(\theta)\cos(\theta-\phi)-
\gamma'(\theta)\sin(\theta-\phi)\geq \gamma(\phi),\qquad \forall \theta,\,\phi\in [-\pi,\pi],
\end{equation}
such that the ES-PFEM \eqref{eqn:full discretization gamma(theta) torus}
is unconditionally energy stable.

\begin{thm}[A generic condition for energy dissipation]
\label{Energy dissipation, theta thm}
Under the  condition \eqref{desired condition gamma(theta)} on
$\gamma(\theta)$, the ES-PFEM
\eqref{eqn:full discretization gamma(theta) torus} is unconditionally energy stable, i.e. for any $\tau>0$, we have
\begin{equation}\label{engdpfd}
W^{m+1}_c\leq W^m_c\leq  \ldots \le W^0_c=\sum_{j=1}^N|\vec h_j^0|\,\gamma(\theta_j^0),\qquad \forall m\ge0.
\end{equation}
\end{thm}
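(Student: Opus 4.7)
The plan is to carry out a fully discrete analogue of the energy argument from Proposition 3.1 of Section 3, but with test functions chosen so that "time derivatives" become backward differences. Specifically, I would take $\varphi^h=\tau\mu^{m+1}\in\mathbb{K}_p^h$ in \eqref{eqn:full discretization gamma(theta) torus eq1} and $\boldsymbol{\omega}^h=\vec X^{m+1}-\vec X^m\in\mathbb{X}_p^h$ in \eqref{eqn:full discretization gamma(theta) torus eq2}. The two resulting identities share the coupling term $(\mu^{m+1}\vec n^m,\vec X^{m+1}-\vec X^m)_{\Gamma^m}^h$, so eliminating it yields the key identity
\begin{equation*}
\bigl(G(\theta^m)\partial_s\vec X^{m+1},\,\partial_s(\vec X^{m+1}-\vec X^m)\bigr)_{\Gamma^m}^h \;=\; -\tau\bigl(\partial_s\mu^{m+1},\,\partial_s\mu^{m+1}\bigr)_{\Gamma^m}^h \;\le\; 0.
\end{equation*}
The theorem therefore reduces to the purely geometric bound
\begin{equation*}
W_c^{m+1}-W_c^m \;\le\; \bigl(G(\theta^m)\partial_s\vec X^{m+1},\,\partial_s(\vec X^{m+1}-\vec X^m)\bigr)_{\Gamma^m}^h,
\end{equation*}
which I would prove element by element and then sum over $j=1,\dots,N$.

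On each subinterval $I_j$, $\vec X^{m+1}$ is affine with $\partial_s\vec X^{m+1}|_{I_j}=\vec h_j^{m+1}/|\vec h_j^m|$ (since arc length is measured on $\Gamma^m$), so the contribution of $I_j$ to the right-hand side equals $G(\theta_j^m)\vec h_j^{m+1}\cdot(\vec h_j^{m+1}-\vec h_j^m)/|\vec h_j^m|$. Writing $G(\theta)\vec v=\gamma(\theta)\vec v-\gamma'(\theta)\vec v^\perp$ and using $\vec h_j^m\cdot\vec h_j^{m+1}=|\vec h_j^m||\vec h_j^{m+1}|\cos(\theta_j^m-\theta_j^{m+1})$ together with $(\vec h_j^{m+1})^\perp\cdot\vec h_j^m=-|\vec h_j^m||\vec h_j^{m+1}|\sin(\theta_j^m-\theta_j^{m+1})$, this contribution simplifies to
\begin{equation*}
\gamma(\theta_j^m)\frac{|\vec h_j^{m+1}|^2}{|\vec h_j^m|} \;-\; |\vec h_j^{m+1}|\Bigl[\gamma(\theta_j^m)\cos(\theta_j^m-\theta_j^{m+1})+\gamma'(\theta_j^m)\sin(\theta_j^m-\theta_j^{m+1})\Bigr].
\end{equation*}

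The desired elementwise inequality, after moving $-|\vec h_j^m|\gamma(\theta_j^m)$ to the right, then becomes
\begin{equation*}
|\vec h_j^{m+1}|\Bigl[\gamma(\theta_j^{m+1})+\gamma(\theta_j^m)\cos(\theta_j^m-\theta_j^{m+1})+\gamma'(\theta_j^m)\sin(\theta_j^m-\theta_j^{m+1})\Bigr] \;\le\; \gamma(\theta_j^m)\frac{|\vec h_j^{m+1}|^2}{|\vec h_j^m|}+\gamma(\theta_j^m)|\vec h_j^m|.
\end{equation*}
Applying the AM--GM inequality $\gamma(\theta_j^m)|\vec h_j^{m+1}|^2/|\vec h_j^m|+\gamma(\theta_j^m)|\vec h_j^m|\ge 2\gamma(\theta_j^m)|\vec h_j^{m+1}|$ (valid since $\gamma>0$) and dividing by $|\vec h_j^{m+1}|>0$, it suffices to show
\begin{equation*}
\gamma(\theta_j^{m+1})+\gamma(\theta_j^m)\cos(\theta_j^m-\theta_j^{m+1})+\gamma'(\theta_j^m)\sin(\theta_j^m-\theta_j^{m+1}) \;\le\; 2\gamma(\theta_j^m),
\end{equation*}
which is precisely hypothesis \eqref{desired condition gamma(theta)} applied with $\theta=\theta_j^m$ and $\phi=\theta_j^{m+1}$. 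Summing over $j$ gives $W_c^{m+1}\le W_c^m$, and iterating in $m$ produces \eqref{engdpfd}.

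The main obstacle I expect is essentially bookkeeping: the perpendicular operator $\vec v^\perp$ and the oddness of $\sin$ conspire to produce sign ambiguities, and the angle differences must be arranged as $\theta_j^m-\theta_j^{m+1}$ (not $\theta_j^{m+1}-\theta_j^m$) to match the convention in \eqref{desired condition gamma(theta)}. The genuine mathematical content is the observation that the quadratic-in-length form generated by $G(\theta^m)$ becomes, after a single application of AM--GM on the two end lengths, a linear-in-length comparison to which the pointwise hypothesis on $\gamma$ directly applies. Everything else is algebra.
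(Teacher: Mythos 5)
Your proposal is correct and follows essentially the same route as the paper: the same test-function choices $\varphi^h=\mu^{m+1}$ (up to the harmless factor $\tau$) and $\boldsymbol{\omega}^h=\vec X^{m+1}-\vec X^m$, the same elementwise expansion of $\bigl(G(\theta^m)\partial_s\vec X^{m+1},\partial_s(\vec X^{m+1}-\vec X^m)\bigr)_{\Gamma^m}^h$ using $\partial_s\vec X^{m+1}|_{I_j}=\vec h_j^{m+1}/|\vec h_j^m|$, the same AM--GM step on $|\vec h_j^m|,|\vec h_j^{m+1}|$, and the same application of \eqref{desired condition gamma(theta)} with $\theta=\theta_j^m$, $\phi=\theta_j^{m+1}$. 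The only difference is cosmetic: you eliminate the coupling term and reduce to a per-element inequality before applying AM--GM, whereas the paper sums over $j$ first; the content is identical.
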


\begin{proof}
Taking $\varphi^h=\mu^{m+1}$ in
\eqref{eqn:full discretization gamma(theta) torus eq1}
and $\boldsymbol{\omega}^h=\vec X^{m+1}-\vec X^m$ in
\eqref{eqn:full discretization gamma(theta) torus eq2}, we get
\begin{subequations}
\label{energy dissipation theta step1}
\begin{align}
\label{energy disspation theta phi=mu}
&\Bigl(\frac{\vec X^{m+1}-\vec X^m}{\tau},~\mu^{m+1}\vec n^m\Bigr)_{\Gamma^m}^h + \Bigl(\partial_s\mu^{m+1},~\partial_s\mu^{m+1}\Bigr)_{\Gamma^m}^h=0;\\
\label{energy disspation theta omega=X-X}
&\Bigl(\mu^{m+1},\vec n^m\cdot(\vec X^{m+1}-\vec X^m)\Bigr)_{\Gamma^m}^h-\Bigl(G(\theta^m)\partial_s \vec X^{m+1},~\partial_s\vec X^{m+1}-\partial_s\vec X^m\Bigr)_{\Gamma^m}^h = 0.
\end{align}
\end{subequations}
Combining \eqref{energy dissipation theta step1},
\eqref{eqn:semiproduct} and \eqref{gthta1}, we have
\begin{align}\label{surface energy diff, gamma(theta) formulation temp}
&\Bigl(G(\theta^m)\partial_s \vec X^{m+1},~\partial_s\vec X^{m+1}-\partial_s\vec X^m\Bigr)_{\Gamma^m}^h+\int_{\Gamma^m}\gamma(\theta^m)ds\nonumber\\
&=\sum_{j=1}^N|\vec h_j^m|\left(\frac{\vec h^{m+1}_j}{|\vec h^m_j|}\cdot\frac{\vec h^{m+1}_j-\vec h^m_j}{|\vec h^m_j|}\right)\gamma(\theta_j^m)+\sum_{j=1}^N|\vec h_j^m|\left(\frac{(\vec h^{m+1}_j)^{\perp}}{|\vec h^{m+1}_j|}\cdot \frac{\vec h^{m}_j}{|\vec h^{m}_j|}\right)\gamma'(\theta_j^m)+\sum_{j=1}^N|\vec h_j^m|\,\gamma(\theta_j^m)\nonumber\\
&=\sum_{j=1}^N|\vec h_j^m|\left(\frac{|\vec h_j^{m+1}|\,\boldsymbol{\tau}^{m+1}_j}{|\vec h_j^m|}\cdot\frac{|\vec h_j^{m+1}|\,\boldsymbol{\tau}^{m+1}_j-|\vec h_j^m|\,\boldsymbol{\tau}^{m}_j}{|\vec h_j^m|}\right)\gamma(\theta^m_j)\nonumber\\
&\quad +\sum_{j=1}^N|\vec h_j^m|\left(\frac{|\vec h_j^{m+1}|\,(\boldsymbol{\tau}^{m+1}_j)^\perp}{|\vec h_j^m|}\cdot\frac{|\vec h_j^m|\,\boldsymbol{\tau}^{m}_j
}{|\vec h_j^m|}\right)\gamma'(\theta^m_j)+\sum_{j=1}^N|\vec h_j^m|\,\gamma(\theta^m_j)\\
&=\sum_{j=1}^N\frac{|\vec h_j^{m+1}|^2\gamma(\theta_j^m)+|\vec h_j^{m+1}|\,|\vec h_j^{m}|\left[\gamma'(\theta_j^m)\sin(\theta_j^{m+1}-\theta_j^m)
-\gamma(\theta_j^m)\cos(\theta_j^{m+1}-\theta_j^m)\right]
+|\vec h_j^{m}|^2\gamma(\theta_j^m)}{|\vec h^m_j|},\nonumber
\end{align}
where
\[
\boldsymbol{\tau}^{m+1}_j=\left(\cos\theta_j^{m+1},\sin\theta_j^{m+1}\right)^T,
\qquad \boldsymbol{\tau}^{m}_j=\left(\cos\theta_j^{m},\sin\theta_j^{m}\right)^T,
\qquad j=1,2,\ldots,N.
\]
By the inequality of arithmetic and geometric means, we have
\begin{equation}\label{AG ineq}
	|\vec h_j^{m+1}|^2\,\gamma(\theta_j^m)+|\vec h_j^{m}|^2\,\gamma(\theta_j^m)\geq 2|\vec h_j^{m}|\,|\vec h_j^{m+1}|\,\gamma(\theta_j^m), \qquad j=1,2,\ldots,N, \quad m\ge0.
\end{equation}
Combining \eqref{desired condition gamma(theta)}, \eqref{AG ineq} and \eqref{surface energy diff, gamma(theta) formulation temp}, we get
\begin{align}\label{surface energy diff, gamma(theta) formulation}
&\Bigl(G(\theta^m)\partial_s \vec X^{m+1},~\partial_s\vec X^{m+1}-\partial_s\vec X^m\Bigr)_{\Gamma^m}^h+\int_{\Gamma^m}\gamma(\theta^m)ds\nonumber\\
&\geq \sum_{j=1}^N\frac{|\vec h_j^{m+1}|\,|\vec h_j^{m}|\left[2\gamma(\theta_j^m)-\gamma(\theta_j^m)
\cos(\theta_j^{m+1}-\theta_j^m)+\gamma'(\theta_j^m)
\sin(\theta_j^{m+1}-\theta_j^m)\right]}{|\vec h^m_j|}\nonumber\\
&=\sum_{j=1}^N |\vec h_j^{m+1}|\left[2\gamma(\theta_j^m)-\gamma(\theta_j^m)
\cos(\theta_j^m-\theta_j^{m+1})-\gamma'(\theta_j^m)
\sin(\theta_j^m-\theta_j^{m+1})\right]\nonumber\\
&\geq \sum_{j=1}^N |\vec h_j^{m+1}|\gamma(\theta_j^{m+1})
=\int_{\Gamma^{m+1}}\gamma(\theta_j^{m+1})ds.
\end{align}
Combining the final result in
\eqref{surface energy diff, gamma(theta) formulation},
\eqref{energy disspation theta phi=mu} and
\eqref{energy disspation theta omega=X-X}, we have,
\begin{align}\label{energy disspation gamma(theta) final}
W^{m+1}_c-W^m_c&=\int_{\Gamma^{m+1}}\gamma(\theta^{m+1})ds
-\int_{\Gamma^m}\gamma(\theta^{m})ds\nonumber\\
&\leq\Bigl(G(\theta^m)\partial_s \vec X^{m+1},~\partial_s\vec X^{m+1}-\partial_s\vec X^m\Bigr)_{\Gamma^m}^h+\int_{\Gamma^m}\gamma(\theta^m)ds
-\int_{\Gamma^m}\gamma(\theta^{m})ds\nonumber\\
&=\Bigl(G(\theta^m)\partial_s \vec X^{m+1},~\partial_s\vec X^{m+1}-\partial_s\vec X^m\Bigr)_{\Gamma^m}^h\nonumber\\
&=\Bigl(\mu^{m+1},\vec n^m\cdot(\vec X^{m+1}-\vec X^m)\Bigr)_{\Gamma^m}^h\nonumber\\
&=-\tau\Bigl(\partial_s\mu^{m+1},
~\partial_s\mu^{m+1}\Bigr)_{\Gamma^m}^h\leq 0,\qquad m\ge0,
\end{align}
which immediately implies the energy dissipation in \eqref{engdpfd}.
\end{proof}

From the linearity and translation invariance with respect to $\gamma(\theta)$ in the energy dissipation condition \eqref{desired condition gamma(theta)}, we have

\begin{coro}[Addition, scalar multiplication and translation]
Assume that $\gamma_1(\theta),\,\gamma_2(\theta)$ be two anisotropic surface
energies satisfying  the energy dissipation condition
\eqref{desired condition gamma(theta)}, $\theta_0$ is a given constant. Then  $\gamma(\theta)=c\gamma_1(\theta)$ with $c>0$, $\gamma(\theta)=\gamma_1(\theta)+\gamma_2(\theta)$ and $\gamma(\theta)=\gamma_1(\theta-\theta_0)$  also
satisfy the energy dissipation condition
\eqref{desired condition gamma(theta)}.
\end{coro}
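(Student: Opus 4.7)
The plan is to recognize that the energy dissipation condition \eqref{desired condition gamma(theta)} has a linear and translation-invariant algebraic structure, so each of the three closure properties reduces to an elementary manipulation. To streamline the argument I would introduce the functional
\begin{equation*}
F[\gamma](\theta,\phi):=2\gamma(\theta)-\gamma(\theta)\cos(\theta-\phi)-\gamma'(\theta)\sin(\theta-\phi)-\gamma(\phi),
\end{equation*}
so that the hypothesis on $\gamma_1,\gamma_2$ becomes $F[\gamma_1](\theta,\phi)\ge 0$ and $F[\gamma_2](\theta,\phi)\ge 0$ for all $\theta,\phi\in[-\pi,\pi]$, and the conclusion is $F[\gamma]\ge 0$ for the three candidate $\gamma$.

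For scalar multiplication and addition, I would observe that $\gamma\mapsto F[\gamma](\theta,\phi)$ is linear in $\gamma$, because each summand depends linearly on either $\gamma$ or $\gamma'$. This immediately gives $F[c\gamma_1]=c\,F[\gamma_1]\ge 0$ when $c>0$, and $F[\gamma_1+\gamma_2]=F[\gamma_1]+F[\gamma_2]\ge 0$. One also notes that $C^1$-regularity and the periodicity conditions $\gamma(-\pi)=\gamma(\pi)$, $\gamma'(-\pi)=\gamma'(\pi)$ are trivially preserved under positive scaling and addition, so $c\gamma_1$ and $\gamma_1+\gamma_2$ are admissible anisotropies.

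For the translation property, I would perform the substitution $\tilde\theta:=\theta-\theta_0$ and $\tilde\phi:=\phi-\theta_0$. Since $\gamma(\theta)=\gamma_1(\tilde\theta)$ and $\gamma'(\theta)=\gamma_1'(\tilde\theta)$ by the chain rule, and since the differences $\theta-\phi=\tilde\theta-\tilde\phi$ are unchanged under the common shift, the trigonometric factors $\cos(\theta-\phi)$ and $\sin(\theta-\phi)$ are invariant. Hence $F[\gamma](\theta,\phi)=F[\gamma_1](\tilde\theta,\tilde\phi)$. To conclude $F[\gamma_1](\tilde\theta,\tilde\phi)\ge 0$ for $\tilde\theta,\tilde\phi$ that may fall outside $[-\pi,\pi]$, I would extend $\gamma_1$ $2\pi$-periodically to $\mathbb{R}$ using $\gamma_1(-\pi)=\gamma_1(\pi)$ and $\gamma_1'(-\pi)=\gamma_1'(\pi)$; the functional $F[\gamma_1]$ is then $2\pi$-periodic in each argument separately, so the assumed inequality on $[-\pi,\pi]^2$ extends to all of $\mathbb{R}^2$.

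There is really no significant obstacle: the corollary is a structural observation about the hypothesis rather than a substantive new estimate. The only point warranting a line of care is the translation step, where one must invoke periodicity of $\gamma_1$ (and its derivative) to propagate the inequality from $[-\pi,\pi]^2$ to the shifted range of the variables; everything else is direct linearity.
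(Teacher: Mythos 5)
Your proposal is correct and follows essentially the same route as the paper, which simply invokes the linearity and translation invariance of the condition \eqref{desired condition gamma(theta)} without further detail; your only addition is the careful (and appropriate) use of the $2\pi$-periodic extension of $\gamma_1$ to justify the translation case.
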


 Now we apply the result in Theorem \ref{Energy dissipation, theta thm}
to the ellipsoidal anisotropy surface energy \eqref{ellipsoidal} and obtain
a simple energy dissipation condition in this special case.

\begin{coro}[Ellipsoidal anisotropic surface energy]\label{coro: ellipsoidal}
For the ellipsoidal anisotropic surface energy $\gamma(\theta)$ in \eqref{ellipsoidal}, assume $-a/2\le b\le a$, then it satisfies
the energy dissipation condition
\eqref{desired condition gamma(theta)}, and thus the
ES-PFEM \eqref{eqn:full discretization gamma(theta) torus} is unconditionally energy stable.
\end{coro}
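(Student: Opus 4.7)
The plan is to recast the inequality \eqref{desired condition gamma(theta)} in a geometric form via the Riemannian representation of $\gamma$, then reduce it to a simple pointwise inequality that can be verified by elementary case analysis. I would first introduce the symmetric positive definite matrix $G=\mathrm{diag}(a,a+b)$ (which is positive definite because the hypothesis implies $a>0$ and $a+b>0$), and note the identities $\gamma(\theta)^2=\vec n^T G\vec n$ and $\gamma(\theta)\gamma'(\theta)=\vec n^T G\,\partial_\theta\vec n=-\vec n^T G\boldsymbol{\tau}$, where $\vec n=\vec n(\theta)$ and $\boldsymbol{\tau}=\boldsymbol{\tau}(\theta)$. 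Setting $\alpha=\theta-\phi$, the orthonormal decomposition $\vec n(\phi)=\cos\alpha\,\vec n(\theta)+\sin\alpha\,\boldsymbol{\tau}(\theta)$ (which follows at once from $\vec n(\theta)\cdot\vec n(\phi)=\cos\alpha$ and $\boldsymbol{\tau}(\theta)\cdot\vec n(\phi)=\sin\alpha$) lets me rewrite, after multiplying \eqref{desired condition gamma(theta)} through by $\gamma(\theta)>0$,
\[
\gamma(\theta)\bigl[2\gamma(\theta)-\gamma(\theta)\cos\alpha-\gamma'(\theta)\sin\alpha\bigr]=\vec n(\theta)^T G\vec n(\phi)+2(1-\cos\alpha)\gamma(\theta)^2,
\]
so the claim reduces to $\vec n(\theta)^T G\vec n(\phi)+2(1-\cos\alpha)\gamma(\theta)^2\ge\gamma(\theta)\gamma(\phi)$.

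Next I would apply the elementary $G$-norm identity
\[
(\vec n(\theta)-\vec n(\phi))^T G(\vec n(\theta)-\vec n(\phi))-(\gamma(\theta)-\gamma(\phi))^2=2\bigl[\gamma(\theta)\gamma(\phi)-\vec n(\theta)^T G\vec n(\phi)\bigr]
\]
to convert the inequality into
\[
4(1-\cos\alpha)\gamma(\theta)^2+(\gamma(\theta)-\gamma(\phi))^2\ge(\vec n(\theta)-\vec n(\phi))^T G(\vec n(\theta)-\vec n(\phi)).
\]
The right-hand side becomes fully explicit via sum-to-product: with $\psi=(\theta+\phi)/2$, one has $\vec n(\theta)-\vec n(\phi)=-2\sin(\alpha/2)(\cos\psi,\sin\psi)^T$ and hence $(\vec n(\theta)-\vec n(\phi))^T G(\vec n(\theta)-\vec n(\phi))=4\sin^2(\alpha/2)[a+b\sin^2\psi]$. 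Using $1-\cos\alpha=2\sin^2(\alpha/2)$, the claim finally reduces to
\[
4\sin^2(\alpha/2)\bigl[2\gamma(\theta)^2-(a+b\sin^2\psi)\bigr]+(\gamma(\theta)-\gamma(\phi))^2\ge 0.
\]
Since the second summand is always nonnegative, it suffices to prove $2\gamma(\theta)^2\ge a+b\sin^2\psi$, equivalently $a+b(2\cos^2\theta-\sin^2\psi)\ge 0$. The quantity $2\cos^2\theta-\sin^2\psi$ ranges over the full interval $[-1,2]$ (since $\theta$ and $\psi$ vary independently), so a case split on the sign of $b$ gives $a-b\ge 0$ for $b\ge 0$ and $a+2b\ge 0$ for $b<0$, both of which are exactly the hypothesis $-a/2\le b\le a$.

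The main obstacle I expect is identifying the right reformulation. A direct squaring of \eqref{desired condition gamma(theta)} or a raw trigonometric expansion produces a dense polynomial in $\cos\theta,\sin\theta,\cos\phi,\sin\phi$ that does not factor cleanly and is hard to match with the sharp window $-a/2\le b\le a$; likewise, estimating $\gamma(\theta)\gamma(\phi)$ by an AM--GM type bound is already too lossy near the boundary of that window. The trick is to retain the cross term $\gamma(\theta)\gamma(\phi)-\vec n(\theta)^T G\vec n(\phi)$ and convert it via the $G$-distance identity, which is exactly what absorbs the $(\gamma(\theta)-\gamma(\phi))^2$ slack and uncovers the clean sufficient inequality $2\gamma(\theta)^2\ge a+b\sin^2\psi$.
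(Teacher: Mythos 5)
Your proof is correct: the identities $\gamma(\theta)^2=\vec n(\theta)^TG\vec n(\theta)$, $\gamma(\theta)\gamma'(\theta)=-\vec n(\theta)^TG\boldsymbol{\tau}(\theta)$, the decomposition $\vec n(\phi)=\cos\alpha\,\vec n(\theta)+\sin\alpha\,\boldsymbol{\tau}(\theta)$, the $G$-norm identity and the evaluation $(\vec n(\theta)-\vec n(\phi))^TG(\vec n(\theta)-\vec n(\phi))=4\sin^2(\alpha/2)\,[a+b\sin^2\psi]$ all check out, and the two-case bound in $b$ yields exactly $a-b\ge0$ and $a+2b\ge0$, i.e. the hypothesis $-a/2\le b\le a$ (note that only the crude bounds $-1\le 2\cos^2\theta-\sin^2\psi\le 2$ are needed, so the remark about $\theta$ and $\psi$ varying independently is not even required for sufficiency). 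In substance, however, you land on the same estimate as the paper: discarding the nonnegative term $(\gamma(\theta)-\gamma(\phi))^2$ in your reduced inequality is algebraically identical to the paper's AM--GM step $\gamma(\theta)\gamma(\phi)\le\tfrac12\bigl(\gamma(\theta)^2+\gamma(\phi)^2\bigr)$, and your remaining sufficient inequality $2\gamma(\theta)^2\ge a+b\sin^2\psi$ becomes, via $2\cos^2\theta=1+\cos 2\theta$ and $2\sin^2\psi=1-\cos(\theta+\phi)$, precisely the paper's factor $\beta\cos(\theta+\phi)+2\beta\cos(2\theta)+\beta+2\ge 0$ with $\beta=b/a$ (the paper first normalizes $a=1$ through its scaling corollary, which your argument dispenses with). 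So your closing claim that an AM--GM-type bound on $\gamma(\theta)\gamma(\phi)$ is ``too lossy near the boundary'' is mistaken: that is exactly the estimate both arguments rest on, and it does capture the full window $-a/2\le b\le a$. What your packaging buys is a cleaner and more structural derivation: introducing $G=\mathrm{diag}(a,a+b)$ links the ellipsoidal energy to the Riemannian-metric anisotropy \eqref{BGN} (the same link the paper uses in the opposite direction in Corollary \ref{coro: BGN}) and replaces the paper's bare trigonometric expansion by a transparent computation with the quadratic form, at the cost of no real difference in the underlying inequality.
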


\begin{proof}Noticing that
$\gamma(\theta)=\sqrt{a+b\cos^2\theta}=\sqrt{a}\sqrt{1+\beta\cos^2\theta}$
with $\beta:=\frac{b}{a}$ and by Corollary 4.1, we need only to prove
the case when $a=1$ and $-1/2\le b=\beta\le 1$. Then we have
\begin{align*}
&2\gamma(\theta)-\gamma(\theta)\cos(\theta-\phi)-
\gamma'(\theta)\sin(\theta-\phi)-\gamma(\phi)\\
&=\frac{(2-\cos(\theta-\phi))\gamma^2(\theta)+
\beta\cos(\theta)\sin(\theta)\sin(\theta-\phi)-
\gamma(\theta)\gamma(\phi)}{\gamma(\theta)}\\
&\geq \frac{1}{\gamma(\theta)}\Bigl((2-\cos(\theta-\phi))
\gamma^2(\theta)+\beta\cos(\theta)\sin(\theta)\sin(\theta-\phi)
-\frac{\gamma^2(\theta)+\gamma^2(\phi)}{2}\Bigr)\\
&=\frac{1}{\gamma(\theta)}\sin ^2\left(\frac{\theta-\phi}{2}\right) (\beta \cos (\theta+\phi)+2 \beta \cos (2 \theta)+\beta+2)\\
&\geq \frac{1}{\gamma(\theta)}\sin ^2\left(\frac{\theta-\phi}{2}\right) \min\{2-2\beta, 2+4\beta\}\geq 0,\qquad \forall \theta,\,\phi\in [-\pi,\pi],
\end{align*}
which immediately implies $\gamma(\theta)$ satisfies the energy dissipation condition \eqref{desired condition gamma(theta)}.
\end{proof}

\begin{coro}[Riemannian  metric  anisotropic surface energy]\label{coro: BGN}
For the Riemannian  metric  anisotropic surface energy $\gamma(\theta)$ in \eqref{BGN}, assume $0<\lambda_{k}^{(1)}\le \lambda_{k}^{(2)}$ be the two eigenvalues of the symmetric positive definite matrix  $G_k$ for $k=1,2,\ldots, K$. If $\lambda_{k}^{(2)}\le 2\lambda_{k}^{(1)}$ for $k=1,2,\ldots, K$, then $\gamma(\theta)$ satisfies
the energy dissipation condition
\eqref{desired condition gamma(theta)}, and thus the
ES-PFEM \eqref{eqn:full discretization gamma(theta) torus} is unconditionally energy stable.
\end{coro}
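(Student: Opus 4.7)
The plan is to reduce the statement to the already-established ellipsoidal case by diagonalizing each matrix $G_k$ and exploiting the stability of the energy-dissipation condition under sums and translations. First I would invoke the additivity part of Corollary 4.1: writing $\gamma(\theta)=\sum_{k=1}^K\gamma_k(\theta)$ with $\gamma_k(\theta):=\sqrt{\vec n(\theta)^T G_k\vec n(\theta)}$, it suffices to show each summand $\gamma_k$ satisfies \eqref{desired condition gamma(theta)}.

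Next I would perform a principal-axis computation of the quadratic form inside the square root. Writing $G_k=\bigl(\begin{smallmatrix}a_k & c_k\\ c_k & b_k\end{smallmatrix}\bigr)$ and using $\vec n(\theta)=(-\sin\theta,\cos\theta)^T$ together with the double-angle identities gives
\begin{equation*}
\vec n(\theta)^T G_k\vec n(\theta)=\tfrac{a_k+b_k}{2}+\tfrac{b_k-a_k}{2}\cos 2\theta-c_k\sin 2\theta.
\end{equation*}
Since the eigenvalues of $G_k$ are $\tfrac{a_k+b_k}{2}\pm\sqrt{\bigl(\tfrac{b_k-a_k}{2}\bigr)^2+c_k^2}=\tfrac{\lambda_k^{(1)}+\lambda_k^{(2)}}{2}\pm\tfrac{\lambda_k^{(2)}-\lambda_k^{(1)}}{2}$, the oscillatory part collapses to $\tfrac{\lambda_k^{(2)}-\lambda_k^{(1)}}{2}\cos\bigl(2(\theta-\theta_k)\bigr)$ for a suitable phase $\theta_k\in[-\pi,\pi]$. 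A half-angle identity then rewrites this as
\begin{equation*}
\vec n(\theta)^T G_k\vec n(\theta)=\lambda_k^{(1)}+\bigl(\lambda_k^{(2)}-\lambda_k^{(1)}\bigr)\cos^2(\theta-\theta_k),
\end{equation*}
so $\gamma_k(\theta)=\tilde\gamma_k(\theta-\theta_k)$ with $\tilde\gamma_k(\theta)=\sqrt{a+b\cos^2\theta}$ for $a:=\lambda_k^{(1)}$ and $b:=\lambda_k^{(2)}-\lambda_k^{(1)}$, i.e.\ $\tilde\gamma_k$ is precisely an ellipsoidal density of the form \eqref{ellipsoidal}.

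Finally I would verify the hypotheses of Corollary 4.2 for $\tilde\gamma_k$: the left inequality $-a/2\le b$ is automatic since $\lambda_k^{(1)}\le\lambda_k^{(2)}$ gives $b\ge 0$, while the right inequality $b\le a$ is exactly the hypothesis $\lambda_k^{(2)}\le 2\lambda_k^{(1)}$. Corollary 4.2 then gives \eqref{desired condition gamma(theta)} for $\tilde\gamma_k$; the translation-invariance part of Corollary 4.1 transfers it to $\gamma_k(\theta)=\tilde\gamma_k(\theta-\theta_k)$; and its additivity part transfers it to $\gamma=\sum_{k=1}^K\gamma_k$. Theorem 4.2 then delivers the unconditional energy stability of the ES-PFEM. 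The only genuine work is the algebraic identification of $\vec n(\theta)^T G_k\vec n(\theta)$ with a shifted ellipsoidal density, which is a routine diagonalization of a $2\times 2$ quadratic form, so I anticipate no real obstacle beyond bookkeeping the phase $\theta_k$.
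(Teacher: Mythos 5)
Your proposal is correct and follows essentially the same route as the paper: reduce to a single summand via the additivity in Corollary 4.1, show that $\sqrt{\vec n(\theta)^T G_k\vec n(\theta)}$ equals $\sqrt{\lambda_k^{(1)}+(\lambda_k^{(2)}-\lambda_k^{(1)})\cos^2(\theta-\theta_k)}$, and then invoke the ellipsoidal Corollary 4.2 together with translation invariance, with the hypothesis $\lambda_k^{(2)}\le 2\lambda_k^{(1)}$ giving exactly $b\le a$. The only (cosmetic) difference is that you diagonalize the quadratic form by double-angle identities, whereas the paper uses the rotation matrix $R_1$ with $\vec n(\theta)=R_1\vec n(\theta-\theta_1)$; both yield the identical shifted ellipsoidal form.
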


\begin{proof}By Corollary 4.1, it suffices that we prove it is true when $K=1$. When $K=1$ in \eqref{BGN}, since $G_1$ is a symmetric positive definite matrix, thus there exists an orthonormal matrix (or a rotation matrix) $R_1\in {\mathbb R}^{2\times 2}$ such that
\begin{equation}\label{orthg}
R_1=\left(\begin{array}{cc} \cos\theta_1 & -\sin\theta_1\\ \sin\theta_1 &\cos\theta_1\\ \end{array}\right), \qquad
R_1^TG_1 R_1=\left(\begin{array}{cc} \lambda_{1}^{(1)} & 0\\ 0&\lambda_{1}^{(2)}\\ \end{array}\right),
\qquad \vec n(\theta)=R_1\vec n(\theta-\theta_1),
\end{equation}
where $\theta_1\in[-\pi,\pi)$ is a constant, $0<\lambda_{1}^{(1)}\le \lambda_{1}^{(2)}$ are the two eigenvalues of $G_1$, and $\vec n(\theta)$ is given in \eqref{BGN}.
Plugging \eqref{orthg} into \eqref{BGN} with $K=1$, we get
\begin{equation}
\gamma(\theta)=\sqrt{\vec n(\theta)^T G_1\vec n(\theta)}=
\sqrt{(\vec n(\theta-\theta_1))^T (R_1^T G_1 R_1) \vec n(\theta-\theta_1)}
=\sqrt{\lambda_{1}^{(1)}+(\lambda_{1}^{(2)}-\lambda_{1}^{(1)})
\cos^2(\theta-\theta_l)}.
\end{equation}
It is easy to see that $\lambda_{1}^{(2)}-\lambda_{1}^{(1)}\ge0\ge -\frac{1}{2}\lambda_{1}^{(1)}$.
Thus by Corollaries 4.2 and 4.1, when $\lambda_{1}^{(2)}-\lambda_{1}^{(1)}\le  \lambda_{1}^{(1)}$, i.e. $\lambda_{1}^{(2)}\le 2\lambda_{1}^{(1)}$, then $\gamma(\theta)$
satisfies the energy dissipation condition
\eqref{desired condition gamma(theta)}.
\end{proof}

Assume
\begin{equation}\label{fougma}
\gamma(\theta)=\frac{a_0}{2}+\sum\limits_{l=1}^\infty\left[a_l\cos( l\theta)+b_l\sin (l\theta)\right], \qquad -\pi\le \theta\le\pi,
\end{equation}
where $a_l$ ($l\ge0$) and $b_l$ ($l\ge1$) are the Fourier coefficients of
$\gamma(\theta)$, which are given as
\begin{equation}\label{gmfour1}
a_l=\frac{1}{\pi}\int_{-\pi}^\pi \gamma(\theta)
\cos( l\theta)d\theta, \qquad b_l=\frac{1}{\pi}\int_{-\pi}^\pi \gamma(\theta)
\sin( l\theta)d\theta, \qquad l\ge0.
\end{equation}
Then we can state a specific energy dissipation condition on $\gamma(\theta)$, which can be easily applied to the
$k$-fold anisotropy surface energy \eqref{kfold}.

\begin{thm}[A specific condition for energy dissipation]
\label{criteria for trignometric series lem}
Assume
\begin{equation}\label{criteria for trignometric series}
\frac{a_0}{2}\geq \sum_{l=1}^\infty(1+l^2)\sqrt{a_l^2+b_l^2},
\end{equation}
then the anisotropic surface energy $\gamma(\theta)$ satisfies the energy dissipation condition
\eqref{desired condition gamma(theta)}, and thus the ES-PFEM
\eqref{eqn:full discretization gamma(theta) torus} is unconditionally
energy stable.
\end{thm}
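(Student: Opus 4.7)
The plan is to exploit the linearity of condition \eqref{desired condition gamma(theta)} in $\gamma$, together with the translation and scaling invariance recorded in Corollary 4.1, so as to reduce the task to a sequence of single-mode inequalities.

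First, I would set $c_l := (1+l^2)\sqrt{a_l^2+b_l^2}$ for $l\ge 1$, together with $\gamma_0 := a_0/2 - \sum_{l\ge 1}c_l$ (which the hypothesis \eqref{criteria for trignometric series} forces to be nonnegative) and $\gamma_l(\theta) := c_l + a_l\cos(l\theta)+b_l\sin(l\theta)$ for $l\ge 1$, so that $\gamma=\gamma_0+\sum_{l\ge 1}\gamma_l$. The constant $\gamma_0$ obeys \eqref{desired condition gamma(theta)} trivially because the left-hand side minus the right-hand side reduces to $\gamma_0(1-\cos(\theta-\phi))\ge 0$. Writing $a_l\cos(l\theta)+b_l\sin(l\theta)=\sqrt{a_l^2+b_l^2}\,\cos(l(\theta-\theta_l^\ast))$ for a suitable phase $\theta_l^\ast$ and invoking the scaling and translation parts of Corollary 4.1, the proof collapses to verifying that the canonical profile $\eta_l(\theta):=(1+l^2)+\cos(l\theta)$ satisfies \eqref{desired condition gamma(theta)} for every $l\ge 1$.

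For this verification, writing $\psi:=\theta-\phi$ and using $\eta_l'(\theta)=-l\sin(l\theta)$ together with the compound-angle identity $\cos(l\phi)=\cos(l\theta)\cos(l\psi)+\sin(l\theta)\sin(l\psi)$, a direct computation produces
\[
2\eta_l(\theta)-\eta_l(\theta)\cos\psi-\eta_l'(\theta)\sin\psi-\eta_l(\phi) = (1+l^2)(1-\cos\psi)+\cos(l\theta)\,A_l+\sin(l\theta)\,B_l,
\]
where $A_l:=2-\cos\psi-\cos(l\psi)$ and $B_l:=l\sin\psi-\sin(l\psi)$. For fixed $\psi$, the infimum over $\theta$ of this right-hand side equals $(1+l^2)(1-\cos\psi)-\sqrt{A_l^2+B_l^2}$, so everything reduces to proving the \emph{amplitude bound}
\[
A_l^2+B_l^2 \le (1+l^2)^2(1-\cos\psi)^2, \qquad \forall\,\psi\in[-\pi,\pi],\ l\ge 1.
\]

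To establish the amplitude bound, I would pass to the half-angle $\alpha:=\psi/2$ and rewrite $A_l=2\sin^2\alpha+2\sin^2(l\alpha)$, $B_l=2l\sin\alpha\cos\alpha-2\sin(l\alpha)\cos(l\alpha)$, and $(1+l^2)(1-\cos\psi)=2(1+l^2)\sin^2\alpha$. After algebraic expansion and elimination of $\cos^2(l\alpha)$ via $\cos^2(l\alpha)+\sin^2(l\alpha)=1$, the quantity $(1+l^2)^2(1-\cos\psi)^2-A_l^2-B_l^2$ should reduce, up to a positive prefactor, to an expression whose nonnegativity follows from the classical pointwise bound $|\sin(l\alpha)|\le l|\sin\alpha|$ (itself proved by a one-line induction from $\sin(l\alpha)=\sin((l-1)\alpha)\cos\alpha+\cos((l-1)\alpha)\sin\alpha$). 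The hard part will be arranging the rearrangement so that every residual term is manifestly nonnegative; as sanity checks, $l=1$ yields equality in the amplitude bound (consistent with the sharp coefficient $1+l^2=2$), while explicit computation at $l=2,3$ produces residues proportional to $(1-\cos\psi)^3$ times strictly positive factors, suggesting a uniform factorization pattern.
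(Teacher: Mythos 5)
Your structural reduction is sound and, in fact, closely parallels the paper's own argument: splitting off the constant $\frac{a_0}{2}$ and treating each Fourier mode separately via the additivity/scaling/translation properties of \eqref{desired condition gamma(theta)} is exactly what the paper does implicitly by estimating the series termwise (its Lemma \ref{trignometric ineq coro} is precisely your per-mode estimate, and your constant $c_l=(1+l^2)\sqrt{a_l^2+b_l^2}$ reproduces the paper's coefficient). Your algebra up to the identity
\begin{equation*}
2\eta_l(\theta)-\eta_l(\theta)\cos\psi-\eta_l'(\theta)\sin\psi-\eta_l(\phi)=(1+l^2)(1-\cos\psi)+\cos(l\theta)A_l+\sin(l\theta)B_l
\end{equation*}
is correct, and the amplitude bound $A_l^2+B_l^2\le (1+l^2)^2(1-\cos\psi)^2$ is indeed true and sufficient. (Two minor points you should still record: the passage from the two-term Corollary 4.1 to an infinite sum needs the uniform convergence of $\gamma$ and $\gamma'$, which \eqref{criteria for trignometric series} provides; and you only need the pointwise bound $\cos(l\theta)A_l+\sin(l\theta)B_l\ge-\sqrt{A_l^2+B_l^2}$, not an attained infimum over the restricted range of $\theta$.)

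The genuine gap is that the amplitude bound — the entire analytic content of the theorem — is asserted rather than proved, and the mechanism you propose for it does not work as described. Writing $s=\sin\alpha$, $c=\cos\alpha$, $S=\sin(l\alpha)$, $C=\cos(l\alpha)$ with $\alpha=\psi/2$, the quantity $(1+l^2)^2(1-\cos\psi)^2-A_l^2-B_l^2$ equals $4\bigl[l^2(l^2+3)s^4+2lscSC-2s^2S^2-l^2s^2-S^2\bigr]$, and this expression vanishes to fourth order in $\psi$ at $\psi=0$ for every $l$ (equality holds identically when $l=1$). Consequently any crude absorption of the negative terms using only $|S|\le l|s|$ together with $2xy\ge-(x^2+y^2)$ leaves a deficit of order $s^2$ near $\alpha=0$ that the quartic terms cannot compensate; the inequality is degenerate exactly where you need it, so "the hard part" you defer is not a routine rearrangement but the crux. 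What replaces it in the paper is the pair of exact identities \eqref{sin}--\eqref{cos} in Appendix A, which factor $(1-\cos(\theta-\phi))$ out of $\cos(l\theta)-\cos(l\phi)+l\sin(l\theta)\sin(\theta-\phi)$ and $\sin(l\theta)-\sin(l\phi)-l\cos(l\theta)\sin(\theta-\phi)$ exactly, leaving a trigonometric polynomial whose coefficient mass $l+\sum_{k=1}^{l-1}2k=l^2$ yields precisely the constant $l^2$ (hence $1+l^2$ after adding back the offset). Equivalently, in your notation one can factor $(1+l^2)^2u^2-A_l^2=(l^2u-v)\bigl((l^2+2)u+v\bigr)$ with $u=1-\cos\psi$, $v=1-\cos(l\psi)$, and must then prove $B_l^2\le(l^2u-v)\bigl((l^2+2)u+v\bigr)$, which again does not follow from $|\sin(l\alpha)|\le l|\sin\alpha|$ alone. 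Until you supply a proof of the amplitude bound at this level of precision (for instance by establishing identities of the Appendix A type), the argument is incomplete.
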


\begin{proof}
Under the assumption \eqref{criteria for trignometric series},
 we have
\begin{equation}\label{dgmtht34}
\gamma'(\theta)=\sum_{l=1}^{\infty}l\left[-a_l\sin (l\theta)+b_l\cos (l\theta)\right], \qquad \theta\in[-\pi,\pi].
\end{equation}
Plugging \eqref{gmfour1} and \eqref{dgmtht34}
into \eqref{desired condition gamma(theta)}, we
get
\begin{align*}
&2\gamma(\theta)-\gamma(\theta)\cos(\theta-\phi)
-\gamma(\phi)-\gamma'(\theta)\sin(\theta-\phi)\nonumber\\
&=\gamma(\theta)(1-\cos(\theta-\phi))+\gamma(\theta)
-\gamma(\phi)-\gamma'(\theta)\sin(\theta-\phi)\nonumber\\
&=\gamma(\theta)(1-\cos(\theta-\phi))+\sum_{l=1}^{\infty}\Bigl[a_l[\cos (l\theta)-\cos (l\phi)+l\sin (l\theta)\sin(\theta-\phi)]\bigr.\nonumber\\
&\qquad\qquad\qquad\qquad\qquad\,+\Bigl.b_l[\sin (l\theta)-\sin(l\phi)-l\cos (l\theta)\sin(\theta-\phi)]\Bigr].\nonumber
\end{align*}
By Lemma \ref{trignometric ineq coro} in Appendix B, noting
\eqref{criteria for trignometric series}, we have
\begin{align}\label{proof for trigonometric series part 2}
&2\gamma(\theta)-\gamma(\theta)\cos(\theta-\phi)-\gamma(\phi)
-\gamma'(\theta)\sin(\theta-\phi)\nonumber\\
&\geq \gamma(\theta)(1-\cos(\theta-\phi))-
\sum_{l=1}^{\infty}\left((1-\cos(\theta-\phi))l^2
\sqrt{a_l^2+b_l^2}\right)\nonumber\\
&=(1-\cos(\theta-\phi))\left(\frac{a_0}{2}+\sum_{l=1}^\infty \left(a_l\cos (l\theta)+b_l\sin (l\theta)-l^2\sqrt{a_l^2+b_l^2}\right)\right)\nonumber\\
&\geq (1-\cos(\theta-\phi))\left(\frac{a_0}{2}-
\sum_{l=1}^\infty(1+l^2)\sqrt{a_l^2+b_l^2}\right)\nonumber\\
&\geq 0,\qquad \forall \theta,\,\phi\in [-\pi,\pi],
\end{align}
which immediately implies $\gamma(\theta)$ satisfies the energy dissipation condition \eqref{desired condition gamma(theta)}.
\end{proof}

By using Theorem \ref{criteria for trignometric series lem}, we can find a sufficient condition  on the $k$-fold anisotropy energy $\gamma(\theta)$
in \eqref{kfold} so that it satisfies the energy dissipation condition
\eqref{desired condition gamma(theta)}. Furthermore, we can prove that the condition is also necessary in this special case, which implies that our Theorem 4.3 is sharp and can hardly be improved.

\begin{coro}[$k$-fold anisotropic surface energy]\label{coro: k-fold}
For the $k$-fold anisotropy energy $\gamma(\theta)$
in \eqref{kfold}, it satisfies the energy dissipation condition \eqref{desired condition gamma(theta)} if and only if
\begin{equation} \label{beta11}
|\beta|\leq \beta_{\max}:=\frac{1}{1+k^2}.
\end{equation}
\end{coro}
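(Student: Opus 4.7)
The plan is to prove the two directions of the iff separately: sufficiency by quoting Theorem~\ref{criteria for trignometric series lem}, and necessity by a local Taylor expansion of the left hand side of \eqref{desired condition gamma(theta)} along the diagonal slice $\phi=\theta+\epsilon$.

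For sufficiency, I would first invoke Corollary 4.1 (translation invariance with respect to $\theta_0$) to reduce to the case $\theta_0=0$, so that $\gamma(\theta)=1+\beta\cos(k\theta)$. In the Fourier expansion \eqref{fougma}, this $\gamma$ has only three nonzero coefficients, namely $a_0=2$, $a_k=\beta$, $b_k=0$, and all remaining $a_l,b_l$ vanish. Substituting into the criterion \eqref{criteria for trignometric series} of Theorem~\ref{criteria for trignometric series lem} collapses it to $1\geq(1+k^2)|\beta|$, which is exactly \eqref{beta11}. Hence \eqref{beta11} is sufficient for the energy dissipation condition \eqref{desired condition gamma(theta)}.

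For necessity, I would set
\[
F(\theta,\phi):=2\gamma(\theta)-\gamma(\theta)\cos(\theta-\phi)-\gamma'(\theta)\sin(\theta-\phi)-\gamma(\phi)
\]
and study $F(\theta,\theta+\epsilon)$ for small $\epsilon$. Differentiating in $\phi$ and evaluating at $\phi=\theta$ gives $F(\theta,\theta)=0$, $\partial_\phi F(\theta,\phi)\big|_{\phi=\theta}=0$, and $\partial_\phi^2 F(\theta,\phi)\big|_{\phi=\theta}=\gamma(\theta)-\gamma''(\theta)$, so Taylor expansion yields
\[
F(\theta,\theta+\epsilon)=\tfrac{1}{2}\bigl[\gamma(\theta)-\gamma''(\theta)\bigr]\,\epsilon^2+O(\epsilon^3).
\]
For the $k$-fold energy, a direct calculation gives $\gamma(\theta)-\gamma''(\theta)=1+(1+k^2)\beta\cos(k(\theta-\theta_0))$, whose minimum over $\theta\in[-\pi,\pi]$ is $1-(1+k^2)|\beta|$, attained at some $\theta^\ast$. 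If $|\beta|>1/(1+k^2)$, this minimum is strictly negative, so for any sufficiently small nonzero $\epsilon$ one has $F(\theta^\ast,\theta^\ast+\epsilon)<0$, contradicting \eqref{desired condition gamma(theta)}. Hence \eqref{beta11} is also necessary.

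The main care point is essentially bookkeeping: correctly reading off the Fourier coefficients and applying Theorem~\ref{criteria for trignometric series lem} on the sufficiency side, and keeping the chain-rule signs straight while differentiating $\cos(\theta-\phi)$ and $\sin(\theta-\phi)$ in $\phi$ on the necessity side. What makes the argument clean, and makes the bound $\beta_{\max}=1/(1+k^2)$ sharp, is that the second-order Taylor coefficient at $\phi=\theta$ turns out to be exactly $\gamma-\gamma''$, so the pointwise positivity of $\gamma-\gamma''$ is simultaneously a transparent necessary condition for local non-negativity of $F$ and, via Theorem~\ref{criteria for trignometric series lem}, sufficient for its global non-negativity in the pure $k$-fold case.
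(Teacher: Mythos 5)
Your proposal is correct and essentially the same as the paper's proof: sufficiency by reading off the Fourier coefficients of the $k$-fold energy and invoking the criterion \eqref{criteria for trignometric series} of Theorem~\ref{criteria for trignometric series lem}, and necessity by observing that the left-hand side of \eqref{desired condition gamma(theta)}, as a function of $\phi$, vanishes to first order at $\phi=\theta$ with second derivative $\gamma(\theta)-\gamma''(\theta)=1+(1+k^2)\beta\cos(k(\theta-\theta_0))$, which must therefore be nonnegative. The only cosmetic differences are that you normalize $\theta_0=0$ via the translation-invariance corollary (the paper instead keeps $a_k=\beta\cos(k\theta_0)$, $b_k=\beta\sin(k\theta_0)$ so that $\sqrt{a_k^2+b_k^2}=|\beta|$) and that you phrase the second-order necessity argument as an explicit Taylor expansion rather than directly asserting $g''(\theta)\ge 0$.
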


\begin{proof} ($\Leftarrow$)
Combining \eqref{kfold} and \eqref{fougma}, we have
\begin{equation}\label{a0albl}
a_0=2,\quad a_k=\beta \cos(k\theta_0), \quad
b_k=\beta \sin (k\theta_0), \qquad a_l=b_l=0, \quad 1\le l\ne k.
\end{equation}
Combining \eqref{a0albl} and \eqref{criteria for trignometric series},
under the condition \eqref{beta11},
we obtain
\begin{equation}
\sum_{l=1}^\infty(1+l^2)\sqrt{a_l^2+b_l^2}=(1+k^2)\sqrt{a_k^2+b_k^2}=
(1+k^2)|\beta|\le (1+k^2)\frac{1}{1+k^2}=1=\frac{a_0}{2},
\end{equation}
which implies that \eqref{criteria for trignometric series} is satisfied
and thus the energy dissipation condition
\eqref{desired condition gamma(theta)} is satisfied.

\medskip

($\Rightarrow$) Denote \begin{equation}\label{fphi}
g(\phi)=2\gamma(\theta)-\gamma(\theta)\cos(\theta-\phi)
-\gamma'(\theta)\sin(\theta-\phi)-\gamma(\phi), \qquad \phi\in[-\pi,\pi].
\end{equation}
Differentiating \eqref{fphi} with respect to $\phi$, we have
\begin{subequations}
\label{fddphi}
\begin{align}
\label{fddphi1}
&g^\prime(\phi)=-\gamma(\theta)\sin(\theta-\phi)
+\gamma'(\theta)\cos(\theta-\phi)-\gamma^\prime(\phi),\\
&g^{\prime\prime}(\phi)=\gamma(\theta)\cos(\theta-\phi)
+\gamma'(\theta)\sin(\theta-\phi)-\gamma^{\prime\prime}(\phi).
\end{align}
\end{subequations}
Taking $\phi=\theta$ in \eqref{fphi} and \eqref{fddphi},
we obtain
\begin{equation} \label{fphi89}
g(\theta)=g'(\theta)=0, \qquad g''(\theta)=\gamma(\theta)-\gamma''(\theta), \qquad \theta\in[-\pi,\pi].
\end{equation}
The energy dissipation \eqref{desired condition gamma(theta)} implies
\begin{equation}\label{fphi67}
g(\phi)\geq 0,\qquad \forall \theta,\,\phi\in [-\pi,\pi].
\end{equation}
Combining \eqref{fphi67} and \eqref{fphi89}, noticing
\eqref{kfold}, we have
\begin{equation}
0\leq g''(\phi)|_{\phi=\theta}=g''(\theta)=\gamma(\theta)-\gamma''(\theta)=
1+(1+k^2)\beta \cos(k(\theta-\theta_0)),
\qquad \forall \theta\in [-\pi,\pi],
\end{equation}
which immediately implies the condition \eqref{beta11}.
\end{proof}

If $\gamma(\theta)\in C^3([-\pi,\pi])$, we can state another specific energy dissipation condition on $\gamma(\theta)$.

\begin{thm}[Another specific condition for energy dissipation]
If $\gamma(\theta)\in C^3([-\pi,\pi])$ satisfies
\begin{equation}\label{sharp criteria}
	\int_{-\pi}^{\pi}\frac{\gamma(\theta)}{2\pi}d\theta\geq \frac{5}{2}\left\|\gamma^{(3)}(\theta)\right\|_{L^2},
\end{equation}
then it satisfies the energy dissipation condition
\eqref{desired condition gamma(theta)}, and thus the ES-PFEM
\eqref{eqn:full discretization gamma(theta) torus} is unconditionally
energy stable.
\end{thm}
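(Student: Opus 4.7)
The plan is to reduce the claim to Theorem \ref{criteria for trignometric series lem} by bounding the Fourier sum $\sum_{l=1}^\infty(1+l^2)\sqrt{a_l^2+b_l^2}$ above by a constant multiple of $\|\gamma^{(3)}\|_{L^2}$, and then verifying that, under the hypothesis \eqref{sharp criteria}, this constant is small enough for the Fourier criterion \eqref{criteria for trignometric series} to hold.

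First I would use the Fourier expansion \eqref{fougma}. Since $\gamma\in C^3([-\pi,\pi])$ with the implicit periodic boundary values inherited from the problem setup, termwise differentiation three times gives
\[
\gamma^{(3)}(\theta)=\sum_{l=1}^{\infty}l^3\bigl[a_l\sin(l\theta)-b_l\cos(l\theta)\bigr],
\]
and Parseval's identity on $[-\pi,\pi]$ yields $\|\gamma^{(3)}\|_{L^2}^2=\pi\sum_{l\ge 1}l^6(a_l^2+b_l^2)$. Next I would apply the Cauchy--Schwarz inequality with weights $1/l^3$ against $l^3$:
\[
\sum_{l=1}^{\infty}(1+l^2)\sqrt{a_l^2+b_l^2}\leq\left(\sum_{l=1}^{\infty}\frac{(1+l^2)^2}{l^6}\right)^{1/2}\left(\sum_{l=1}^{\infty}l^6(a_l^2+b_l^2)\right)^{1/2}.
\]
Expanding $(1+l^2)^2=1+2l^2+l^4$ and using the classical zeta values $\zeta(2)=\pi^2/6$, $\zeta(4)=\pi^4/90$, $\zeta(6)=\pi^6/945$, the first factor evaluates to $\sqrt{\frac{\pi^2}{6}+\frac{\pi^4}{45}+\frac{\pi^6}{945}}$.

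Combining the two estimates and recalling $a_0/2=\frac{1}{2\pi}\int_{-\pi}^{\pi}\gamma(\theta)\,d\theta$ from \eqref{gmfour1}, I arrive at
\[
\sum_{l=1}^{\infty}(1+l^2)\sqrt{a_l^2+b_l^2}\leq \sqrt{\frac{1}{\pi}\Bigl(\frac{\pi^2}{6}+\frac{\pi^4}{45}+\frac{\pi^6}{945}\Bigr)}\,\|\gamma^{(3)}\|_{L^2}.
\]
A direct numerical check shows that the universal constant on the right is approximately $1.24$, hence well below $5/2$. Invoking the hypothesis \eqref{sharp criteria} then gives $a_0/2\geq \frac{5}{2}\|\gamma^{(3)}\|_{L^2}\geq \sum_{l\ge 1}(1+l^2)\sqrt{a_l^2+b_l^2}$, so \eqref{criteria for trignometric series} is satisfied; Theorem \ref{criteria for trignometric series lem} then yields the energy dissipation condition \eqref{desired condition gamma(theta)}, and unconditional energy stability of the ES-PFEM follows from Theorem \ref{Energy dissipation, theta thm}.

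The only mild technical obstacle I foresee is justifying termwise differentiation and Parseval's identity when $\gamma$ is only assumed to lie in $C^3([-\pi,\pi])$: one needs the periodic boundary values $\gamma^{(k)}(-\pi)=\gamma^{(k)}(\pi)$ for $k=0,1,2$, which are implicit in the periodic framework used throughout the paper. Beyond that, the argument is a routine Cauchy--Schwarz plus zeta-value computation; the constant $5/2$ in \eqref{sharp criteria} is not sharp (the argument actually delivers a constant close to $1.24$), but the rounded value $5/2$ produces a cleaner statement.
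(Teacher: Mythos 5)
Your proposal is correct and follows essentially the same route as the paper: expand $\gamma$ in its Fourier series, apply Cauchy--Schwarz with the weights $(1+l^2)/l^3$ versus $l^3\sqrt{a_l^2+b_l^2}$, bound the weight sum $\sum_{l\ge1}(1+l^2)^2/l^6=\zeta(6)+2\zeta(4)+\zeta(2)$ numerically, and invoke the Fourier criterion \eqref{criteria for trignometric series} of Theorem \ref{criteria for trignometric series lem}. The only (harmless) difference is the Parseval normalization: you keep the factor $\pi$ for the standard $L^2([-\pi,\pi])$ norm and hence obtain the sharper constant $\approx 1.24$, whereas the paper works with $\|\gamma^{(3)}\|_{L^2}=\bigl(\sum_{l\ge1} l^6(a_l^2+b_l^2)\bigr)^{1/2}$ and bounds the weight sum by $25/4$ to arrive at exactly $5/2$.
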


\begin{proof} Plugging \eqref{fougma} into the left-hand of
\eqref{sharp criteria}, we get
\begin{equation}\label{auxiliary eq0}
\int_{-\pi}^{\pi}\frac{\gamma(\theta)}{2\pi}d\theta=\frac{a_0}{2}.
\end{equation}
Similarly, plugging \eqref{fougma} into the right-hand of
\eqref{sharp criteria} and applying the Cauchy-Schwarz inequality, we get
\begin{align}\label{auxiliary ineq1}
\left\|\gamma^{(3)}(\theta)\right\|_{L^2}&=
\left(\sum_{l=1}^{\infty}l^6(a_l^2+b_l^2)\right)^{1/2}\nonumber\\
&=\left(\sum_{l=1}^{\infty}\frac{(1+l^2)^2}
{l^6}\right)^{-1/2}\left[\left(\sum_{l=1}^{\infty}l^6(a_l^2+b_l^2)
\right)^{1/2}\left(\sum_{l=1}^{\infty}\frac{(1+l^2)^2}{l^6}
\right)^{1/2}\right]\nonumber\\
&\geq \left(\sum_{l=1}^{\infty}\frac{(1+l^2)^2}{l^6}
\right)^{-1/2}\left(\sum_{l=1}^{\infty}(1+l^2)\sqrt{a_l^2+b_l^2}\right).
\end{align}
By using Fourier series, we have the following estimate
\begin{equation}\label{auxiliary ineq2} \sum_{l=1}^{\infty}\frac{(1+l^2)^2}{l^6}=
\frac{2\pi^4+42\pi^3+315\pi^2}{1890}\leq \frac{25}{4}\quad \Rightarrow\quad \left(\sum_{l=1}^{\infty}\frac{(1+l^2)^2}
{l^6}\right)^{-1/2}\ge \frac{2}{5}.
\end{equation}
Combining \eqref{auxiliary eq0}, \eqref{auxiliary ineq1} and
\eqref{auxiliary ineq2}, we obtain
\begin{equation}	\frac{a_0}{2}=\int_{-\pi}^{\pi}\frac{\gamma(\theta)}{2\pi}d\theta\geq \frac{5}{2}\left\|\gamma^{(3)}(\theta)\right\|_{L^2}\geq \sum_{l=1}^{\infty}(1+l^2)\sqrt{a_l^2+b_l^2},
\end{equation}
which immediately implies \eqref{criteria for trignometric series} is satisfied, and thus \eqref{desired condition gamma(theta)} is satisfied
by using Theorem \ref{criteria for trignometric series lem}.
\end{proof}

\section{Extension to solid-state dewetting}

In this section, we extend the new and simple variational formulation
\eqref{eqn:weak continuous torus} and its ES-PFEM
\eqref{eqn:full discretization gamma(theta) torus}
for a closed curve under anisotropic surface diffusion to
solid-state dewetting in materials science \cite{wang2015sharp,jiang2016solid,bao2017parametric},
i.e. evolution of an open
curve under anisotropic surface diffusion and contact
line migration (cf. Figure \ref{fig:model2d open}).

\begin{figure}[http]
\centering
\includegraphics[width=0.6\textwidth]{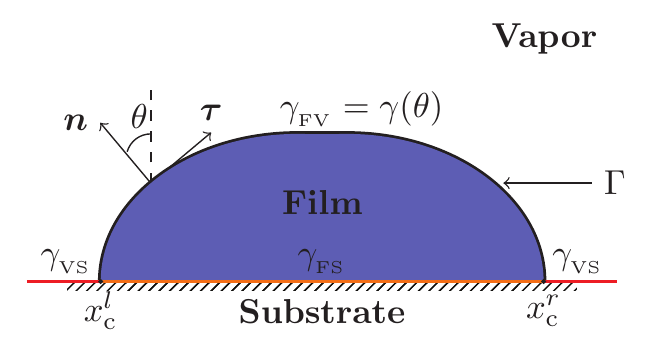}
\caption{A schematic illustration of a thin film on a rigid, flat substrate (i.e., the $x$-axis) in two dimensions,
where $x_c^l$ and $x_c^r$ are the left
and right contact points, $\gamma_{_{\subFV}}=\gamma(\theta),~\gamma_{_{\subVS}}$ and $\gamma_{_{\subFS}}$ represent the film vapor, vapor substrate and film substrate surface energy densities, respectively.}
\label{fig:model2d open}
\end{figure}

\subsection{A sharp interface model and its new variational formulation}

As shown in Figure \ref{fig:model2d open}, a typical problem in solid-state dewetting is to study the motion of an open curve $\Gamma:=\Gamma(t)$
under anisotropic surface diffusion with its two contact points
$x_c^l:=x_c^l(t)$ and $x_c^r:=x_c^r(t)$ moving along the rigid flat
substrate. By adapting the same notations in the previous sections
except removing the periodic boundary conditions, we represent
$\Gamma(t):=\vec X(\rho,t)=(x(\rho,t),y(\rho,t))^T$
for $0\le \rho\le 1$ (or respectively, $\Gamma(t):=\vec X(s,t)=(x(s,t),y(s,t))^T$ with $0\le s\le L(t)$ the arc length parameter and
$L(t)$ the length of $\Gamma(t)$). As it was derived in the literature \cite{wang2015sharp,jiang2016solid,bao2017parametric},
$\vec X(\rho,t)$ satisfies the anisotropic surface diffusion \eqref{anisotropic surface diffusion, original}-\eqref{def of mu, original}
and the following boundary conditions: \cite{wang2015sharp,jiang2016solid,bao2017parametric}
\begin{itemize}
\item [(i)] contact point condition
\begin{equation}
\label{eqn:weakBC1 gamma(theta) open}
y(0,t)=0,\quad y(1,t) = 0,\quad t\geq0;
\end{equation}
\item [(ii)] relaxed contact angle condition
\begin{equation}
\label{eqn:weakBC2 gamma(theta) open}
\frac{d x_c^l(t)}{d t}=\eta\, f(\theta_d^l;\sigma),\qquad
\frac{d x_c^r(t)}{d t}=-\eta\, f(\theta_d^r;\sigma),\qquad t\ge0;
\end{equation}
\item [(iii)]zero-mass flux condition
\begin{equation}
\label{eqn:weakBC3 gamma(theta) open}
\partial_s\mu(0,t) =0,\qquad\partial_s\mu(1,t) = 0, \qquad t\geq 0;
\end{equation}
\end{itemize}
satisfying $x_c^l(t)=x(0,t)\le x_c^r(t)=x(1,t)$, where $\theta_d^l:=\theta_d^l(t)$ and $\theta_d^r:=\theta_d^r(t)$ are the contact angles at the left and the right contact points, respectively. $0<\eta<\infty$ denotes the contact line mobility and $f(\theta;\sigma)$ is defined as
\begin{equation}\label{def of f} f(\theta;\sigma)=\gamma(\theta)\cos\theta-\gamma^\prime(\theta)\sin\theta-\sigma, \quad\theta\in [-\pi,\pi],
\end{equation}
with $\sigma=\cos\theta_i=\frac{\gamma_{_{\subVS}}-\gamma_{_{\subFS}}}{\gamma_0}$
and $\theta_i$ and $\gamma_0$ being the isotropic Young contact angle and dimensionless surface energy unit, respectively \cite{wang2015sharp,jiang2016solid,bao2017parametric}.
The initial condition is given as \eqref{init} satisfying $y_0(0)=y_0(L_0)=0$ and  $x_c^l(0)=x_0(0)\le x_c^r(0)=x_0(L_0)$ with $L_0$ the length of the curve at $t=0$.

Let $A(t)$ (defined in \eqref{AtWct}) be the area/mass of the region enclosed by $\Gamma(t)$ and the flat substrate, and define the total interfacial
energy $W_o(t)$ as
\begin{equation}
\label{Energy open}
W_o(t)=\int_{\Gamma(t)}\gamma(\theta)ds-\sigma(x_c^r(t)-x_c^l(t)), \qquad t\ge0.
\end{equation}
As it was proven in the literature \cite{wang2015sharp,jiang2016solid,bao2017parametric}, we have
\citep{bao2017parametric}
\begin{equation}
\label{eqn:two properties pde open}
\frac{d}{dt}A(t)=0,\qquad
\frac{d}{dt}W_o(t)=-\int_{\Gamma(t)}|\partial_s \mu|^2ds-\frac{1}{\eta}\Bigl[\left(\frac{dx_c^r(t)}{dt}\right)^2
+\left(\frac{dx_c^l(t)}{dt}\right)^2\Bigr]\leq 0, \qquad t\ge0,
\end{equation}
which implies area/mass conservation and energy dissipation, i.e.
\begin{equation}
A(t) \equiv A(0),\qquad W_o(t)\le W_o(t_1)\le W_o(0)=\int_{\Gamma(0)}\gamma(\theta)\,ds-\sigma(x_c^r(0)-x_c^l(0)), \qquad t\ge t_1\ge0.
\label{eqn:massenergy2ddopen}
\end{equation}

Introduce the functional spaces
\begin{equation}
\label{eqn:H01 continuous}
H^1_0(\mathbb{I}):=\{u\in H^1(\mathbb{I})\ |\  u(0)=u(1)=0\},
\qquad \mathbb{X}:=H^1(\mathbb{I})\times H_0^1(\mathbb{I}).
\end{equation}
Similar to those derivations in Section 2, we can obtain a
new and simple variational formulation for
\eqref{eqn:original formulation gamma(theta)} with
the boundary conditions
\eqref{eqn:weakBC1 gamma(theta) open}-\eqref{eqn:weakBC3 gamma(theta) open}
and the initial condition \eqref{init} as:
Given an initial open curve $\Gamma(0):=\vec X(\cdot,0)=\vec X_0\in \mathbb{X}$,
 find an open curve $\Gamma(t)=\vec X(\cdot, t)\in \mathbb{X}$ and
$\mu(t)\in \mathbb{K}$, such that:
\begin{subequations}
\label{eqn:weak continuous open}
\begin{align}
\label{eqn:weak1 continuous open}
&\Bigl(\partial_{t}\vec{X},\varphi\vec{n}\Bigr)_{\Gamma(t)}+\Bigl(\partial_{s}\mu,
\partial_s\varphi\Bigr)_{\Gamma(t)}=0,\qquad\forall \varphi\in \mathbb{K},\\[0.5em]
\label{eqn:weak2 continuous open}
&\Bigl(\mu,~\vec{n}\cdot\boldsymbol{\omega}\Bigr)_{\Gamma(t)}-\Bigl( G(\theta)\partial_s \vec X,\partial_s\boldsymbol{\omega}\Bigr)_{\Gamma(t)}-
\frac{1}{\eta}\Bigr[\frac{dx_c^l(t)}{dt}\,\omega_1(0) + \frac{dx_c^r(t)}{dt}\omega_1(1)\Bigr] \nonumber \\
&\qquad\qquad\qquad\qquad + \sigma\left[\omega_1(1)-\omega_1(0)\right] = 0,\quad\forall\boldsymbol{\omega}=(\omega_1,\omega_2)^T\in\mathbb{X},
\end{align}
\end{subequations}
satisfying  $x_c^l(t)=x(0,t)\le x_c^r(t)=x(1,t)$.

Similar to the closed curve case, one can easily show
area/mass conservation and energy dissipation of
the variational problem \eqref{eqn:weak continuous open},
i.e. \eqref{eqn:massenergy2ddopen} is valid.
The details are omitted here for brevity.

\subsection{An energy-stable PFEM and its properties}

Introduce the finite element subspaces
 \begin{equation}
\label{eqn:H1 semi open}
\mathbb{K}_0^h:=\{u^h\in\mathbb{K}^h \ |\ u^h(0)=u^h(1)=0\},
\qquad \mathbb{X}^h:=\mathbb{K}^h\times \mathbb{K}_0^h.
\end{equation}
Similar to Section 3, we can discretize \eqref{eqn:weak continuous open}
in space by PFEM and establish its area/mass conservation and energy dissipation of the semi-discrtization. Again, the details are omitted here
for brevity.

For each $m\geq 0$, let $\Gamma^m:=\vec X^m(\rho)=(x^m(\rho),y^m(\rho))^T\in \mathbb{X}^h$ and $\mu^m\in \mathbb{K}^h$ be  the approximations of $\Gamma(\cdot,t_m)=\vec X(\rho,t_m)\in \mathbb{X}$
and $\mu(\cdot,t_m)\in \mathbb{K}$, respectively, which is
the solution of the variational problem \eqref{eqn:weak continuous open}
with the initial data \eqref{init}. Let $\Gamma^0:=\vec{X}^0(\rho)=(x^0(\rho),y^0(\rho))^T\in \mathbb{X}^h$ be an interpolation of the initial curve $\vec{X}_0(s)$ in \eqref{init}, which is defined as $\vec{X}^0(\rho=\rho_j)=\vec{X}_0(s=s_j^0)$
with $s_j^0=L_0\rho_j$ for $j=0,1,\ldots,N$.
Then an energy-stable PFEM ({\bf ES-PFEM}) for discretizing
\eqref{eqn:weak continuous open} with \eqref{init} is given as:
Take $\Gamma^0=\vec X^0(\cdot) \in \mathbb{X}^h$ satisfying $x^0(0)\le x^0(1)$ and $y^0(0)=y^0(1)=0$, and set $x_l^0:=x^0(0)$ and $x_r^0:=x^0(1)$, for $m\ge0$,  find $\Gamma^{m+1}=\vec X^{m+1}(\cdot)=\left(x^{m+1}(\cdot),y^{m+1}(\cdot)\right)^T\in
\mathbb{X}^h$
and  $\mu^{m+1}(\cdot)\in \mathbb{K}^h$, such that
\begin{subequations}
\label{eqn:full discretization gamma(theta) open}
\begin{align}
\label{eqn:full discretization gamma(theta) open eq1}
&\Bigl(\frac{\vec X^{m+1}-\vec X^m}{\tau},~\varphi^h\vec n^m\Bigr)_{\Gamma^m}^h + \Bigl(\partial_s\mu^{m+1},~\partial_s\varphi^h\Bigr)_{\Gamma^m}^h=0,\qquad\forall \varphi^h\in \mathbb{K}^h,\\[0.5em]
\label{eqn:full discretization gamma(theta) open eq2}
&\Bigl(\mu^{m+1},\vec n^m\cdot\boldsymbol{\omega}^h\Bigr)_{\Gamma^m}^h-\Bigl(G(\theta^m)\partial_s \vec X^{m+1},~\partial_s\boldsymbol{\omega}^h\Bigr)_{\Gamma^m}^h - \frac{1}{\eta}\Bigr[\frac{x_l^{m+1}-x_l^n}{\tau}\,\omega_1^h(0) + \frac{x_r^{m+1}-x_r^m}{\tau}\omega_1^h(1)\Bigr]\nonumber\\
&\qquad\qquad\qquad\qquad + \;\sigma\,\Bigl[\omega_1^h(1) - \omega_1^h(0)\Bigr] = 0,\quad\forall\boldsymbol{\omega}^h\in\mathbb{X}^h,
\end{align}
\end{subequations}
satisfying $x_l^{m+1}=x^{m+1}(0)\le x_r^{m+1}=x^{m+1}(1)$.

The above ES-PFEM is semi-implicit, i.e. only a linear system needs
to be solved at each time step, and thus it is very efficient.
We have the following result for its well-posedness.

\begin{thm}[Well-posedness] For each $m\ge0$, assume the condition
\eqref{eqn:assumption} is valid and
at leat one of  ${\vec h}_1^m$ and ${\vec h}_N^m$ is not horizontal, i.e.
\begin{equation}\label{stab976}
({\vec h}_1^m\cdot {\vec e}_2)^2+({\vec h}_N^m\cdot {\vec e}_2)^2>0,
\qquad \hbox{with}\quad  {\vec e}_2=(0,1)^T.
\end{equation}
Then the full-discretization
\eqref{eqn:full discretization gamma(theta) open} is well-posed, i.e.,
there exists a unique solution $\bigl(\vec X^{m+1},\kappa^{m+1}\bigr)\in\bigl(\mathbb{X}^h,\mathbb{K}^h\bigr)$.
\end{thm}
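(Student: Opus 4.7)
\medskip

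\noindent\textbf{Plan of proof.}
The plan is to mirror the closed-curve argument of the preceding well-posedness theorem, but to track the additional contact-line mobility contribution carefully and to use the non-horizontal endpoint condition \eqref{stab976} at the very end. Since \eqref{eqn:full discretization gamma(theta) open} is a square linear system in the finite-dimensional space $\mathbb{X}^h\times\mathbb{K}^h$, existence and uniqueness both follow once I show that the homogeneous version (dropping the source terms involving $\vec X^m$, $x_l^m$, $x_r^m$ and $\sigma$) admits only the trivial solution $(\vec X^{m+1},\mu^{m+1})=(\vec 0,0)$.

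First I would test the homogeneous system by $\varphi^h=\mu^{m+1}$ in \eqref{eqn:full discretization gamma(theta) open eq1} and by $\boldsymbol{\omega}^h=\vec X^{m+1}$ in \eqref{eqn:full discretization gamma(theta) open eq2}. Using $\omega_1^h(0)=x_l^{m+1}$ and $\omega_1^h(1)=x_r^{m+1}$ and eliminating the mixed term $(\mu^{m+1},\vec n^m\cdot\vec X^{m+1})_{\Gamma^m}^h$ by subtraction, one obtains the energy-type identity
\begin{equation*}
\tau\bigl(\partial_s\mu^{m+1},\partial_s\mu^{m+1}\bigr)_{\Gamma^m}^h
+\bigl(G(\theta^m)\partial_s\vec X^{m+1},\partial_s\vec X^{m+1}\bigr)_{\Gamma^m}^h
+\tfrac{1}{\eta\tau}\bigl[(x_l^{m+1})^2+(x_r^{m+1})^2\bigr]=0.
\end{equation*}
Since $G(\theta^m)$ is symmetric positive definite and assumption \eqref{eqn:assumption} guarantees $|\vec h_j^m|>0$, all three terms are non-negative, so each vanishes. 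This yields $\partial_s\mu^{m+1}\equiv 0$, $\partial_s\vec X^{m+1}\equiv\vec 0$ and $x_l^{m+1}=x_r^{m+1}=0$. Hence $\mu^{m+1}\equiv\mu^c$ and $\vec X^{m+1}\equiv\vec X^c=(x^c,y^c)^T$ are constant. The constraint $y^{m+1}\in\mathbb{K}_0^h$ forces $y^c=0$, and $x_l^{m+1}=x^c=0$ forces $x^c=0$, so $\vec X^{m+1}\equiv\vec 0$.

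It remains to prove $\mu^c=0$; this is where condition \eqref{stab976} is essential and constitutes the main obstacle (compared with the closed-curve proof, the absence of periodicity makes the boundary nodes genuinely active test positions). Substituting $\vec X^{m+1}\equiv\vec 0$ back into \eqref{eqn:full discretization gamma(theta) open eq2} leaves the reduced identity
\begin{equation*}
\bigl(\mu^c,\vec n^m\cdot\boldsymbol{\omega}^h\bigr)_{\Gamma^m}^h=0,\qquad \forall\,\boldsymbol{\omega}^h\in\mathbb{X}^h.
\end{equation*}
I would specialize $\boldsymbol{\omega}^h=\phi_0^h\vec e_1$ and $\boldsymbol{\omega}^h=\phi_N^h\vec e_1$, where $\phi_0^h,\phi_N^h\in\mathbb{K}^h$ are the nodal hat functions at $\rho_0$ and $\rho_N$ (no boundary constraint is imposed on the first component, so these choices are admissible). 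Using \eqref{eqn:semiproduct} together with $\vec n_j^m\cdot\vec e_1=-h_{j,y}^m/|\vec h_j^m|$, the mass-lumped inner product collapses to a single contribution from the adjacent edge, producing $-\tfrac12\mu^c\,h_{1,y}^m=0$ and $-\tfrac12\mu^c\,h_{N,y}^m=0$, i.e.\ $\mu^c(\vec h_1^m\cdot\vec e_2)=\mu^c(\vec h_N^m\cdot\vec e_2)=0$. Assumption \eqref{stab976} states that at least one of $\vec h_1^m\cdot\vec e_2$ and $\vec h_N^m\cdot\vec e_2$ is nonzero, whence $\mu^c=0$. Consequently the homogeneous system admits only the trivial solution, and the full-discretization \eqref{eqn:full discretization gamma(theta) open} is well-posed.
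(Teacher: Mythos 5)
Your proposal is correct, and up to the reduction to constants it coincides with the paper's argument: both reduce to the homogeneous system, test with $\varphi^h=\mu^{m+1}$ and $\boldsymbol{\omega}^h=\vec X^{m+1}$, obtain the identity
\begin{equation*}
\tau\bigl(\partial_s\mu^{m+1},\partial_s\mu^{m+1}\bigr)_{\Gamma^m}^h
+\bigl(G(\theta^m)\partial_s\vec X^{m+1},\partial_s\vec X^{m+1}\bigr)_{\Gamma^m}^h
+\frac{(x_l^{m+1})^2+(x_r^{m+1})^2}{\eta\tau}=0,
\end{equation*}
and conclude $\partial_s\mu^{m+1}\equiv0$, $\partial_s\vec X^{m+1}\equiv\vec 0$, $x_l^{m+1}=x_r^{m+1}=0$, hence $\vec X^{m+1}\equiv\vec 0$ (using $y^{m+1}\in\mathbb{K}_0^h$) and $\mu^{m+1}\equiv\mu^c$. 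Where you diverge is the final step $\mu^c=0$: the paper disposes of the reduced identity $\bigl(\mu^c,\vec n^m\cdot\boldsymbol{\omega}^h\bigr)_{\Gamma^m}^h=0$ by citing Theorem 4.1 of the reference \cite{bao2020energy}, whereas you prove it directly by testing with the endpoint hat functions $\boldsymbol{\omega}^h=(\phi_0^h,0)^T$ and $(\phi_N^h,0)^T$, for which the mass-lumped product \eqref{eqn:semiproduct} collapses to $-\tfrac12\mu^c\,h_{1,y}^m$ and $-\tfrac12\mu^c\,h_{N,y}^m$, so that \eqref{stab976} forces $\mu^c=0$. Your computation is correct (indeed $\vec n_j^m\cdot\vec e_1=-h_{j,y}^m/|\vec h_j^m|$, and these test functions are admissible since the first component of $\mathbb{X}^h$ carries no boundary constraint), and it has the advantage of being self-contained and of showing exactly where and how the non-horizontality condition \eqref{stab976} enters, while the paper's citation covers the statement with no extra work. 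One small slip: $G(\theta^m)=\gamma(\theta^m)I_2+\gamma'(\theta^m)J$ is not symmetric in general; what you actually use is that its symmetric part is $\gamma(\theta^m)I_2$, so $\vec v^TG(\theta^m)\vec v=\gamma(\theta^m)|\vec v|^2>0$, which is all the energy identity requires.
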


\begin{proof}
Again, we just need to prove the following homogeneous problem only has zero solution:
\begin{subequations}
\label{eqn:full discretization gamma(theta) homo open}
\begin{align}
\label{eqn:full discretization gamma(theta) homo eq1 open}
&\Bigl(\frac{\vec X^{m+1}}{\tau},~\varphi^h\vec n^m\Bigr)_{\Gamma^m}^h + \Bigl(\partial_s\mu^{m+1},~\partial_s\varphi^h\Bigr)_{\Gamma^m}^h=0,\qquad\forall \varphi^h\in \mathbb{K}^h,\\[0.5em]
\label{eqn:full discretization gamma(theta) homo eq2 open}
&\Bigl(\mu^{m+1},\vec n^m\cdot\boldsymbol{\omega}^h\Bigr)_{\Gamma^m}^h-\Bigl(G(\theta^m)\partial_s \vec X^{m+1},~\partial_s\boldsymbol{\omega}^h\Bigr)_{\Gamma^m}^h
- \frac{x_l^{m+1}\,\omega_1^h(0) + x_r^{m+1}\,\omega_1^h(1)}{\eta\tau}=0,
\quad  \forall\boldsymbol{\omega}^h\in\mathbb{X}^h.
\end{align}
\end{subequations}
Taking $\varphi^h=\mu^{m+1}$ in
\eqref{eqn:full discretization gamma(theta) homo eq1 open} and $\boldsymbol{\omega}^h=\vec X^{m+1}$ in
\eqref{eqn:full discretization gamma(theta) homo eq2 open},
multiplying the first one by $\tau$, and then subtracting it by the second one, we obtain
\begin{equation}\label{sum of squares, case gamma(theta) open}
\tau \Bigl(\partial_s\mu^{m+1},~\partial_s\mu^{m+1}\Bigr)_{\Gamma^m}^h
+\Bigl(\gamma(\theta^m) \partial_s \vec X^{m+1},~\partial_s \vec X^{m+1}\Bigr)_{\Gamma^m}^h +\frac{(x_l^{m+1})^2+ (x_r^{m+1})^2}{\eta\tau}=0.
\end{equation}
Since $G(\theta)$ is a positive definite matrix and thus
the left hand side of
\eqref{sum of squares, case gamma(theta) open}
is the summation of squares, we obtain
\begin{equation}
\label{zeros case gamma(theta) open}
\partial_s\vec X^{m+1}\equiv \vec 0,\qquad
\partial_s\mu^{m+1}\equiv  0,\qquad
x_l^{m+1}=0,\qquad x_r^{m+1}=0.
\end{equation}
This, together with $\vec X^{m+1}\in \mathbb{X}^h$ and $\mu^{m+1}\in\mathbb{K}^h$,
implies that
\begin{equation} \label{xmp1976}
\vec X^{m+1}\equiv \vec 0, \qquad
\mu^{m+1}\equiv \mu^c\in\mathbb{R}.
\end{equation}
Substituting \eqref{xmp1976} into
\eqref{eqn:full discretization gamma(theta) homo eq2 open}, we obtain
\begin{equation}
\label{mu=0 gamma(theta) open}
\Bigl(\mu^{c},\vec n^m\cdot\boldsymbol{\omega}^h\Bigr)_{\Gamma^m}^h=0,
\qquad\forall\boldsymbol{\omega}^h=(\omega_1^h,~\omega_2^h)^T\in\mathbb{X}^h.
\end{equation}
Under the assumptions \eqref{eqn:assumption} and \eqref{stab976} and
by using Theorem 4.1 in \cite{bao2020energy}, then \eqref{mu=0 gamma(theta) open} implies $\mu^c=0$. Thus the homogeneous problem
\eqref{eqn:full discretization gamma(theta) homo open} only has zero solution, and thereby the original inhomogeneous linear system \eqref{eqn:full discretization gamma(theta) open} is
well-posed.
\end{proof}

Define the total interfacial energy of the
open polygonal curve $\Gamma^m$ as
\begin{equation}\label{full-discretized energy gamma(theta) open}
	W^m_o:=W_o(\Gamma^m)=\sum_{j=1}^N|\vec h_j^m|\,\gamma(\theta_j^m)-\sigma(x_r^m-x_l^m), \qquad m\ge0.
\end{equation}
Then for the ES-PFEM \eqref{eqn:full discretization gamma(theta) open},
we have the following results on its energy dissipation.

\begin{thm}[Energy dissipation]\label{Energy dissipation, theta thm open}
Under the  condition \eqref{desired condition gamma(theta)} on
$\gamma(\theta)$, the ES-PFEM
\eqref{eqn:full discretization gamma(theta) open}
is unconditionally energy-stable, i.e. for any $\tau>0$, we have
\begin{equation}\label{edppopen45}
W^{m+1}_o\leq W^m_o\leq \ldots\le W^0_o:=\sum_{j=1}^N|\vec h_j^0|\,\gamma(\theta_j^0)-\sigma(x_r^0-x_l^0),\qquad \forall m\ge0.
\end{equation}
\end{thm}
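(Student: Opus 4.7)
The plan is to parallel the closed-curve argument in Theorem 4.3 almost verbatim, but keep careful track of the extra boundary contributions coming from the contact-point mobility and the substrate energy $-\sigma(x_r^m-x_l^m)$.

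First, I would choose the test function $\varphi^h=\mu^{m+1}\in \mathbb{K}^h$ in \eqref{eqn:full discretization gamma(theta) open eq1} and $\boldsymbol{\omega}^h=\vec X^{m+1}-\vec X^m \in \mathbb{X}^h$ in \eqref{eqn:full discretization gamma(theta) open eq2} (note $\omega_2^h(0)=y^{m+1}(0)-y^m(0)=0$ and similarly at $\rho=1$, so this is a legal test function). Multiplying the first identity by $\tau$ and using it to eliminate the $(\mu^{m+1},\vec n^m\cdot(\vec X^{m+1}-\vec X^m))_{\Gamma^m}^h$ term in the second yields the key equality
\begin{align*}
\Bigl(G(\theta^m)\partial_s \vec X^{m+1},\partial_s\vec X^{m+1}-\partial_s\vec X^m\Bigr)_{\Gamma^m}^h
&= -\tau\Bigl(\partial_s\mu^{m+1},\partial_s\mu^{m+1}\Bigr)_{\Gamma^m}^h \\
&\quad -\frac{(x_l^{m+1}-x_l^m)^2+(x_r^{m+1}-x_r^m)^2}{\eta\tau} \\
&\quad +\sigma\bigl[(x_r^{m+1}-x_r^m)-(x_l^{m+1}-x_l^m)\bigr].
\end{align*}

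Next, I would invoke the central inequality already established within the proof of Theorem 4.3: under the generic condition \eqref{desired condition gamma(theta)}, the pointwise trigonometric inequality combined with the AM--GM estimate \eqref{AG ineq} gives
\[
\Bigl(G(\theta^m)\partial_s \vec X^{m+1},\partial_s\vec X^{m+1}-\partial_s\vec X^m\Bigr)_{\Gamma^m}^h+\sum_{j=1}^N|\vec h_j^m|\,\gamma(\theta_j^m)
\geq \sum_{j=1}^N|\vec h_j^{m+1}|\,\gamma(\theta_j^{m+1}).
\]
This bound is purely local on each segment and does not use any periodicity, so it transfers verbatim to the open-curve setting.

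Finally, I would combine these two displays. Subtracting the surface-energy sum at step $m$ from both sides of the inequality, substituting the equality for the $G$-term, and regrouping, the $\sigma[(x_r^{m+1}-x_r^m)-(x_l^{m+1}-x_l^m)]$ on the right exactly matches the change in the substrate contribution in the definition \eqref{full-discretized energy gamma(theta) open} of $W^m_o$. Writing $W^{m+1}_o-W^m_o$ as the change in surface energy minus $\sigma$ times the change in wetted length gives
\[
W^{m+1}_o-W^m_o\leq -\tau\Bigl(\partial_s\mu^{m+1},\partial_s\mu^{m+1}\Bigr)_{\Gamma^m}^h -\frac{(x_l^{m+1}-x_l^m)^2+(x_r^{m+1}-x_r^m)^2}{\eta\tau}\leq 0,
\]
and iterating yields \eqref{edppopen45}. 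The main delicate point is the bookkeeping with the $\sigma$ terms: they enter \emph{with opposite signs} in the substrate-energy increment and in the variational identity, so they cancel, while the mobility terms yield an extra non-positive dissipation rate proportional to $1/\eta$. No new condition beyond \eqref{desired condition gamma(theta)} is needed.
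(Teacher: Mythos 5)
Your proposal is correct and follows essentially the same route as the paper's proof: the same test functions $\varphi^h=\mu^{m+1}$, $\boldsymbol{\omega}^h=\vec X^{m+1}-\vec X^m$ (with the same observation that the second component vanishes at the endpoints), the same reuse of the per-segment inequality \eqref{surface energy diff, gamma(theta) formulation} from the closed-curve case, and the same cancellation of the $\sigma$ terms leaving the viscous and contact-line dissipation terms, hence \eqref{edppopen45}. No gaps.
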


\begin{proof}
Taking $\varphi^h=\mu^{m+1}$ in
\eqref{eqn:full discretization gamma(theta) open eq1} and $\boldsymbol{\omega}^h=\vec X^{m+1}-\vec X^m$ in
\eqref{eqn:full discretization gamma(theta) open eq2}, multiplying the first one by $\tau$, and then subtracting it by the second one, we obtain
\begin{align}
\label{energy disspation theta open}
&\tau \Bigl(\partial_s\mu^{m+1},~\partial_s\mu^{m+1}
\Bigr)_{\Gamma^m}^h+\Bigl(G(\theta^m)\partial_s \vec X^{m+1},~\partial_s\vec X^{m+1}-\partial_s\vec X^m\Bigr)_{\Gamma^m}^h\nonumber\\
&\qquad\qquad +\frac{1}{\eta}\Bigr[\frac{(x_l^{m+1}-x_l^m)^2}{\tau}\, + \frac{(x_r^{m+1}-x_r^m)^2}{\tau}\Bigr] - \;\sigma\,\Bigl[(x_r^{m+1}-x_r^m) - (x_l^{m+1}-x_l^m)\Bigr] = 0.
\end{align}
Under the condition \eqref{desired condition gamma(theta)} and noting \eqref{surface energy diff, gamma(theta) formulation}, we get
\begin{align}\label{energy disspation gamma(theta) final open}
W^{m+1}_o-W^m_o&=\int_{\Gamma^{m+1}}\gamma(\vec n^{m+1})ds-\sigma(x_r^{m+1}-x_l^{m+1})-\int_{\Gamma^m}\gamma(\vec n^{m})ds+\sigma(x_r^{m}-x_l^{m})\nonumber\\
&\leq\Bigl(G(\theta^m)\partial_s \vec X^{m+1},~\partial_s\vec X^{m+1}-\partial_s\vec X^m\Bigr)_{\Gamma^m}^h+\int_{\Gamma^m}\gamma(\vec n^m)ds\nonumber\\
&\qquad-\sigma(x_r^{m+1}-x_l^{m+1})-\int_{\Gamma^m}\gamma(\vec n^{m})ds+\sigma(x_r^{m}-x_l^{m})\nonumber\\
&=\Bigl(G(\theta^m)\partial_s \vec X^{m+1},~\partial_s\vec X^{m+1}-\partial_s\vec X^m\Bigr)_{\Gamma^m}^h- \;\sigma\,\Bigl[(x_r^{m+1}-x_r^m) - (x_l^{m+1}-x_l^m)\Bigr]\nonumber\\
&=-\tau\Bigl(\partial_s\mu^{m+1},
~\partial_s\mu^{m+1}\Bigr)_{\Gamma^m}^h-\frac{1}{\eta}
\Bigr[\frac{(x_l^{m+1}-x_l^m)^2}{\tau}\, + \frac{(x_r^{m+1}-x_r^m)^2}{\tau}\Bigr]\nonumber\\
&\leq 0,\qquad m\ge0,
\end{align}
which immediately implies the energy dissipation \eqref{edppopen45}.
\end{proof}

\begin{remark}
All the results in Section 4 on energy dissipation of the ES-PFEM
\eqref{eqn:full discretization gamma(theta) torus} for motion of a closed curve can be extended to the ES-PFEM
\eqref{eqn:full discretization gamma(theta) open} for the motion of an open curve in solid-state dewetting. Again, the details are
omitted here for brevity.
\end{remark}

\section{Numerical results}

In this section, we report numerical results of the performance of our
proposed ES-PFEM  \eqref{eqn:full discretization gamma(theta) torus}
and \eqref{eqn:full discretization gamma(theta) open}
for the evolution of a closed curve and an open curve, respectively.
We will test their spatial/temporal convergent rates and energy dissipation, and investigate their area/mass loss and
mesh quality during the evolution.

To measure the difference between two curves $\Gamma_1$ and $\Gamma_2$, we adopt the manifold distance $M(\Gamma_1,\Gamma_2)$ which was introduced
in \citep{bao2020energy}. When $\Gamma_1$ and $\Gamma_2$ are two closed curves, let $\Omega_1$ and $\Omega_2$ be the regions
enclosed by $\Gamma_1$ and $\Gamma_2$, respectively; and
when they are two open curves above
the flat substrate, let $\Omega_1$ and $\Omega_2$ be the regions
enclosed between the flat substrate and $\Gamma_1$ and $\Gamma_2$, respectively. The manifold distance $M(\Gamma_1,\Gamma_2)$   is defined as
\citep{bao2020energy} (cf. Figure \ref{fig:illustration of manifold distance}):
\begin{equation}\label{manifold distance}
M(\Gamma_1,\Gamma_2):=|(\Omega_1\backslash \Omega_2)\cup (\Omega_2\backslash \Omega_1)|=|\Omega_1|+|\Omega_2|-2|\Omega_1\cap \Omega_2|,
\end{equation}
\begin{figure}[http]
\centering
\includegraphics[width=0.8\textwidth]{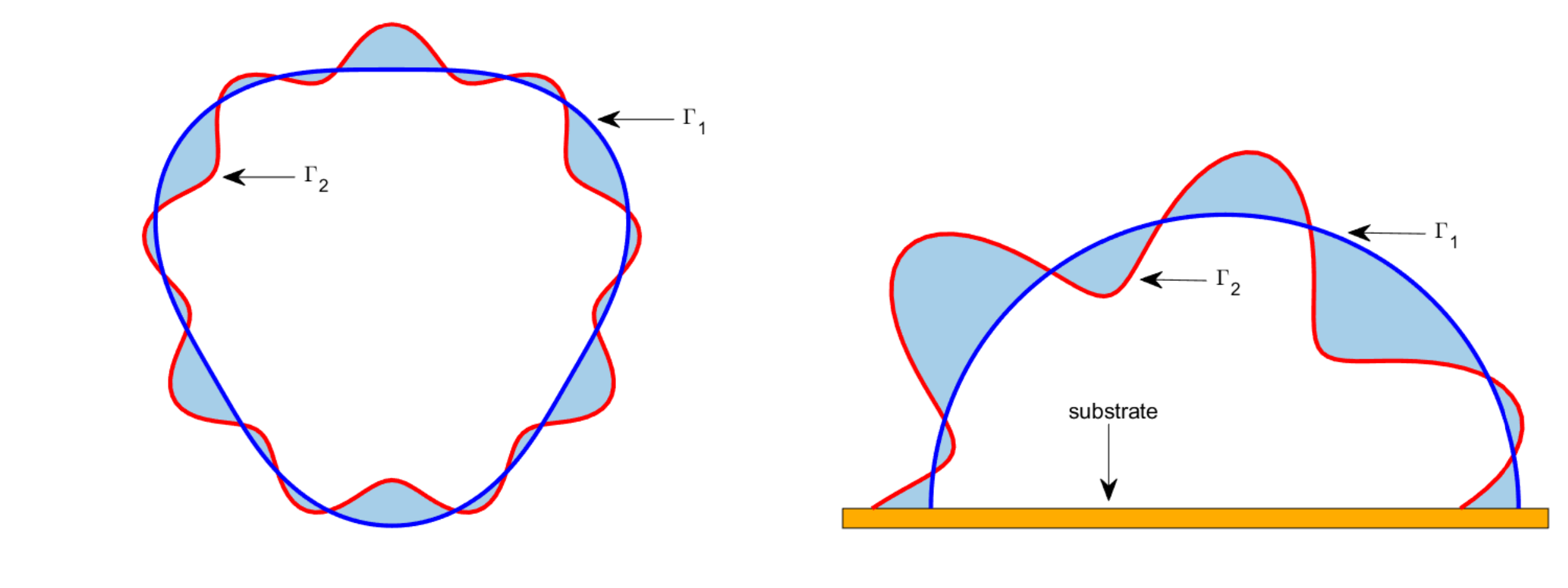}
\caption{An illustration of the manifold distance $M(\Gamma_1,\Gamma_2)$. Two curves $\Gamma_1$ and $\Gamma_2$ are colored by blue and red, respectively, the substrate is colord by brown and $M(\Gamma_1,\Gamma_2)$ is defined as the area of the light blue region.}
\label{fig:illustration of manifold distance}
\end{figure}
\noindent where $|\Omega|$ denotes the area of $\Omega$.

Suppose $\Gamma^m$ is the numerical approximation of $\Gamma(t=t_m=m\tau)$
with  mesh size $h$ and time step $\tau$ under the choice of $\tau=h^2$,
for simplicity, since formally our ES-PFEM is first order accurate in time and second order accurate in space.
The numerical error is defined as
\begin{equation}
e^{h}(t_m):=M(\Gamma^m,\Gamma(t=t_m)),\qquad m\ge0,
\end{equation}
where $\Gamma(t=t_m)$ is obtained numerically with a very small mesh size
$h=h_e$ and a very small time step $\tau=\tau_e$, e.g.
$h_e=2^{-8}$ and $\tau_e=2^{-16}$, in practical computations when
the exact solution is not available.
Let $A^h(t=t_m)$ be the area/mass of the region enclosed by
$\Gamma^m$ if it is a closed curve, and respectively,
the region between the flat substrate and
$\Gamma^m$ if it is an open curve.
Then the normlized area/mass loss $\frac{\Delta A^h(t_m)}{A^h(0)}$ and the mesh ratio $R^h(t=t_m)$ which
is used to measure the mesh quality of $\Gamma^m$,
are defined as
\begin{equation}
\frac{\Delta A^h(t_m)}{A^h(0)}:=\frac{A^h(t=t_m)-A^h(0)}{A^h(0)},\qquad
R^h(t=t_m):=\frac{h_{\rm max}^m}{h_{\rm min}^m},
\qquad m\ge0,
\end{equation}
where
\[
h_{\rm max}^m:=\max_{1\le j\le N}\ |\vec h_j^m|,\qquad
h_{\rm min}^m:=\min_{1\le j\le N}\ |\vec h_j^m|,\qquad m\ge0.
\]


In the following numerical simulations, the initial shapes are taken as a $4\times 1$ rectangle for both closed curves and open curves except that they are stated otherwise. For solid-state dewetting problems, we always choose the contact line mobility
$\eta=100$ in \eqref{eqn:weakBC2 gamma(theta) open} \citep{bao2020energy}.

\subsection{Results of the ES-PFEM
\eqref{eqn:full discretization gamma(theta) torus}
for the evolution of closed curves}

Figure \ref{fig:convergence rate test1} plots spatial convergence
rate of the ES-PFEM \eqref{eqn:full discretization gamma(theta) torus}
with the anisotropic surface energy $\gamma(\theta)=1+\beta\cos(4\theta)$
for different times $t$ and $\beta$. Figure
\ref{fig:area conservation closed} depicts the time evolution of
the normalized area/mass loss $\frac{\Delta A^h(t_m)}{A^h(0)}$ and the energy dissipation
$W_c^h(t_m)$ with $\gamma(\theta)=1+0.05\cos(4\theta)$
 for different $h$. Finally Figure \ref{fig:mesh ratio closed} shows
the time evolution of the mesh ratio $R^h(t=t_m)$ with
$\gamma(\theta)\equiv 1$ and $\gamma(\theta)=1+0.05\cos(4\theta)$
 for different $h$.


\begin{figure}[t!]
\centering
\includegraphics[width=0.8\textwidth]{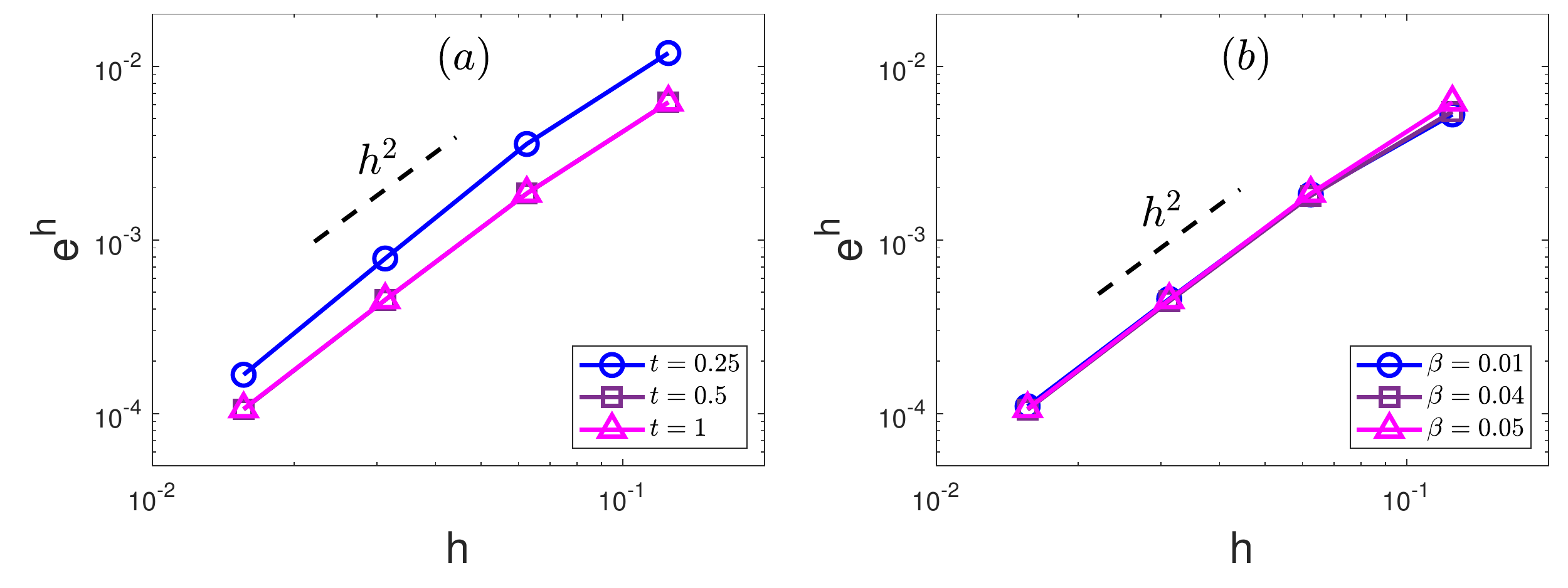}
\caption{Plots of $e^h$ vs $h$ to test the spatial convergence rate
of the ES-PFEM \eqref{eqn:full discretization gamma(theta) torus}
with  $\gamma(\theta)=1+\beta\cos(4\theta)$ for: (a) different times
at $t = 0.25,\,t=0.5$ and $t=1$ with $\beta=0.05$; and (b) different anisotropic strengths $\beta=0.01,\, \beta=0.04$ and $\beta=0.05$ at time $t=2$.}
\label{fig:convergence rate test1}
\end{figure}



\begin{figure}[t!]
\centering
\includegraphics[width=0.8\textwidth]{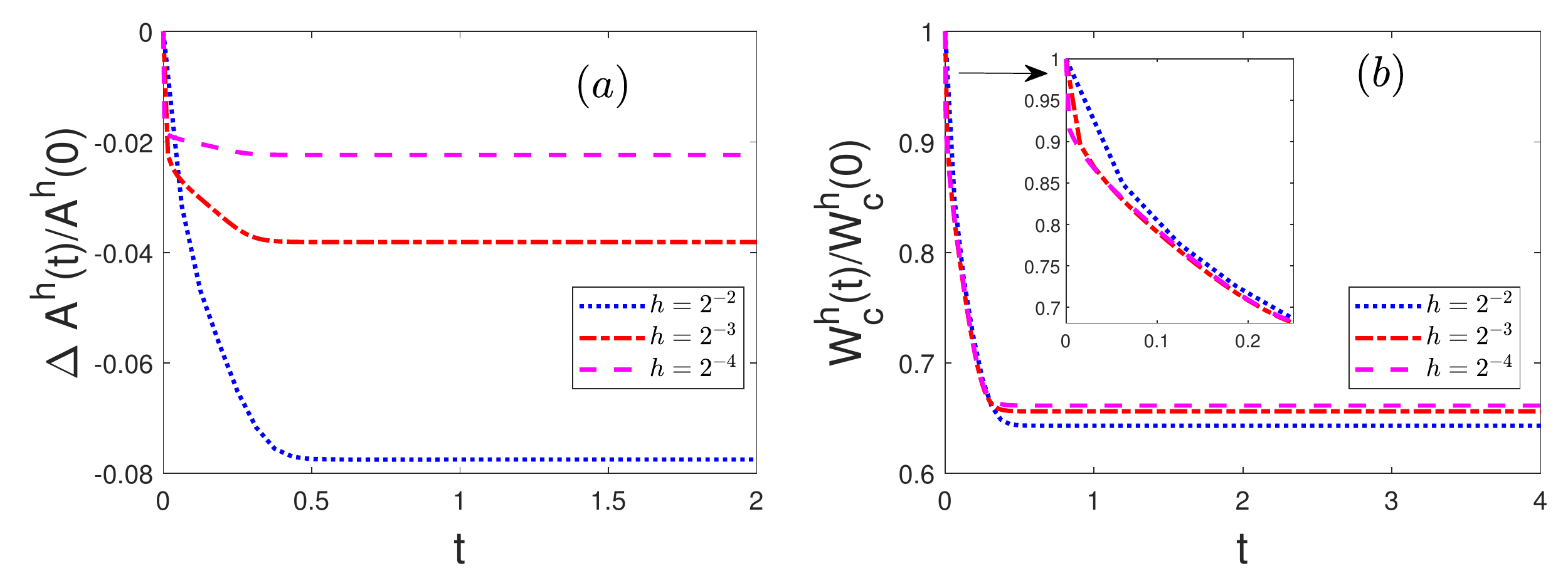}
\caption{Time evolution of the normalized area/mass loss $\frac{\Delta A^h(t)}{A^h(0)}$ (left (a)) and the normalized energy $\frac{W^h_c(t)}{W^h_c(0)}$ (right (b)) for different
mesh sizes $h$. The anisotropic surface energy is chosen as $\gamma=1+0.05\cos4\theta$.}
\label{fig:area conservation closed}
\end{figure}

\begin{figure}[t!]
\centering
\includegraphics[width=0.8\textwidth]{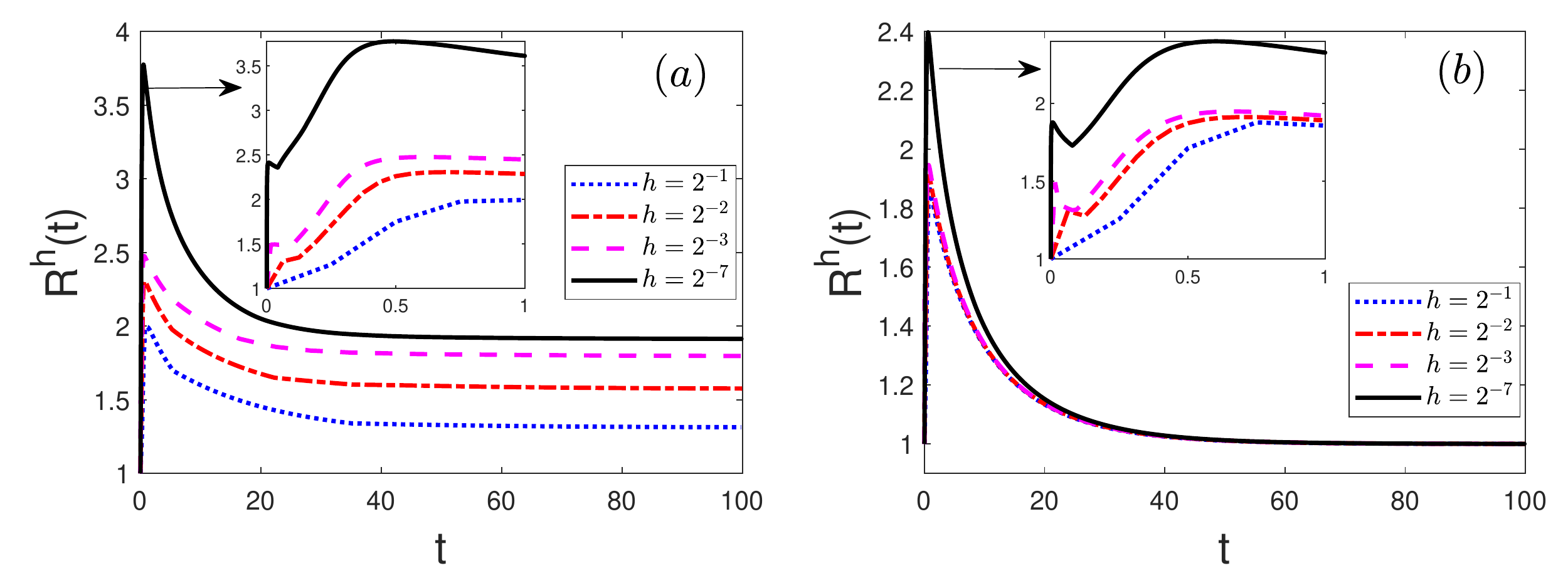}
\caption{Time evolution of the mesh ratio $R^h(t)$ for different mesh sizes $h$ with: (a) an anisotropic surface energy $\gamma(\theta)=1+0.05\cos 4\theta$; and (b) an isotropic surface energy $\gamma(\theta)\equiv 1$.  }
\label{fig:mesh ratio closed}
\end{figure}

From Figures \ref{fig:convergence rate test1}-\ref{fig:mesh ratio closed},
we can draw the following conclusions for the ES-PFEM
\eqref{eqn:full discretization gamma(theta) torus}
 for the evolution
of closed curves under anisotropic surface diffusion:

(i) The ES-PFEM \eqref{eqn:full discretization gamma(theta) torus}
is second order accurate in space and first order accurate in time (cf. Figure
\ref{fig:convergence rate test1}).

(ii) It is unconditionally energy stable when the anisotropic surface energy
$\gamma(\theta)$ satisfies those energy dissipation conditions in Section 3
(cf. Figure \ref{fig:area conservation closed}b).

(iii) The mesh ratio $R^h(t=t_m)$ increases during a short period near $t=0$ and then it decreases to a constant when $t\gg1$. For isotropic surface energy, i.e. isotropic surface diffusion, $R^h(t=t_m)\to 1$ when
$t\to+\infty$ (cf. Figure \ref{fig:mesh ratio closed}b), which indicates
asymptotic mesh equal distribution (AMED) of the ES-PFEM
\eqref{eqn:full discretization gamma(theta) torus}
for isotropic surface diffusion. On the other hand,
for anisotropic surface energy, i.e. anisotropic surface diffusion, $R^h(t=t_m)\to C>1$ when
$t\to+\infty$ (cf. Figure \ref{fig:mesh ratio closed}a), which indicates
asymptotic mesh quasi-equal distribution (AMQD) of the ES-PFEM
\eqref{eqn:full discretization gamma(theta) torus}
for anisotropic surface diffusion.

(iv) Area/mass loss is observed during a short period near $t=0$,
especially when the mesh size $h$ is not small
(cf. Figure \ref{fig:area conservation closed}a). When
$t=t_m\gg1$, area/mass is almost conserved and
we observed numerically that $|\frac{\Delta A^h(t_m)}{A^h(0)}|\le Ch^2$, i.e.
it converges quadratically and this agrees with the second order accuracy
in space of the ES-PFEM \eqref{eqn:full discretization gamma(theta) torus}.


\begin{figure}[t!]
\centering
\includegraphics[width=0.8\textwidth]{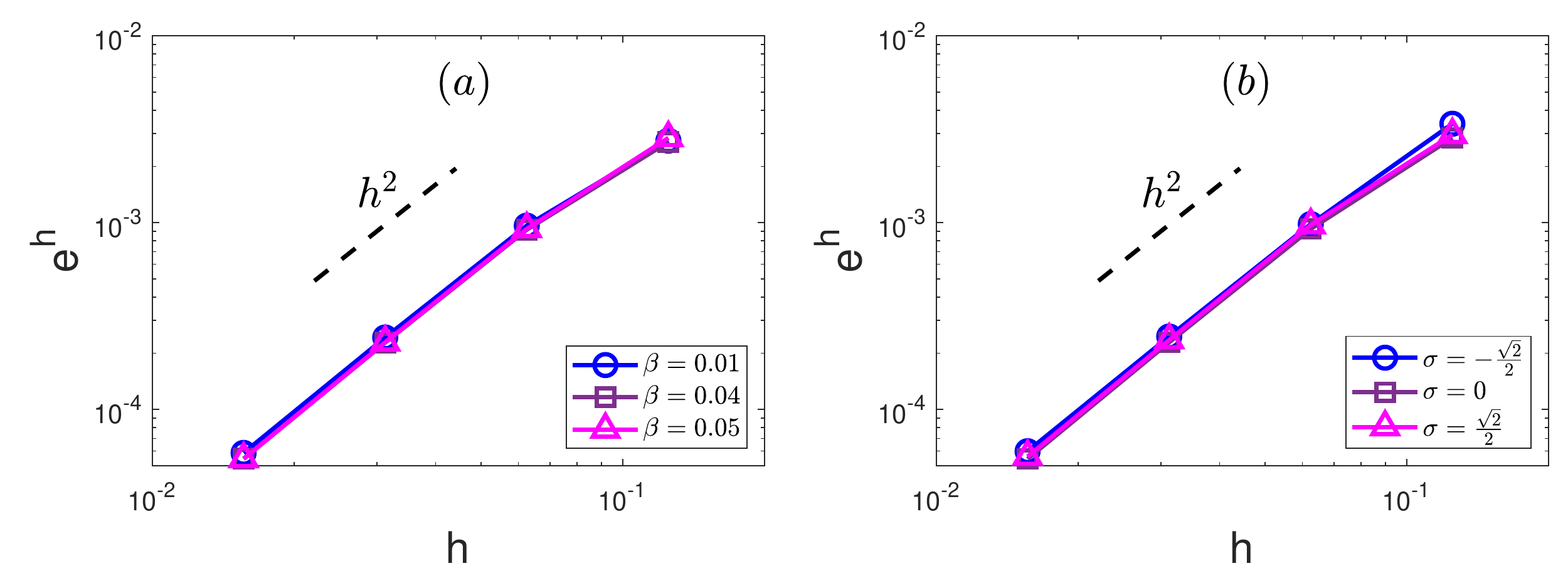}
\caption{Plots of $e^h(t=2)$ vs $h$ to test the spatial convergence rate
of the ES-PFEM \eqref{eqn:full discretization gamma(theta) open}
with  $\gamma(\theta)=1+\beta\cos(4\theta)$ for: (a) different $\beta$ with $\sigma=0$ in \eqref{def of f}; (b) different $\sigma$ in \eqref{def of f} with $\beta=0.05$.}
\label{fig:convergence rate test2}
\end{figure}

\subsection{Results of the ES-PFEM
\eqref{eqn:full discretization gamma(theta) open}
for the evolution of open curves}

Figure \ref{fig:convergence rate test2} plots spatial convergence
rate of the ES-PFEM \eqref{eqn:full discretization gamma(theta) open}
with the anisotropic surface energy $\gamma(\theta)=1+\beta\cos(4\theta)$
for different times $t$ and $\beta$. Figure \ref{fig:area conservation open}
 depicts the time evolution of
the normalized area/mass loss $\frac{\Delta A^h(t_m)}{A^h(0)}$ and the energy dissipation
$W_o^h(t_m)$ with $\gamma(\theta)=1+0.05\cos(4\theta)$
 for different $h$. Finally Figure \ref{fig:mesh ratio open} shows
the time evolution of the mesh ratio $R^h(t=t_m)$ with
$\gamma(\theta)\equiv 1$ and $\gamma(\theta)=1+0.05\cos(4\theta)$
 for different $h$.



\begin{figure}[t!]
\centering
\includegraphics[width=0.8\textwidth]{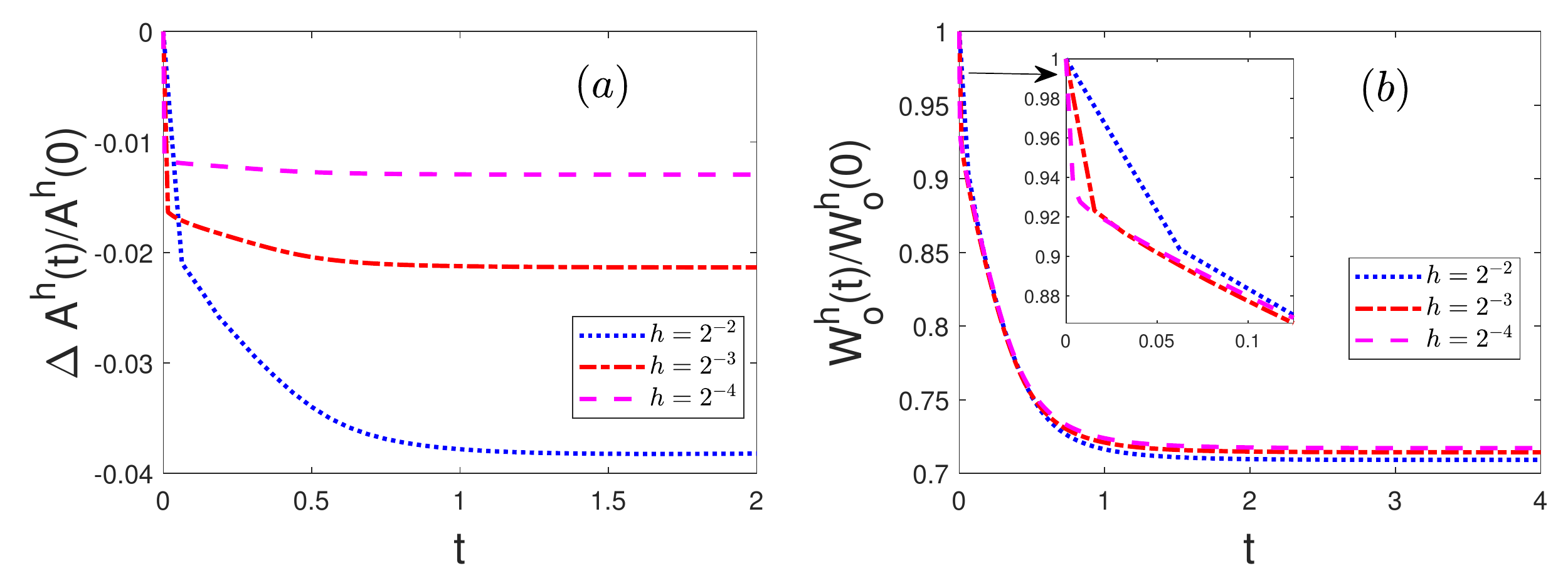}
\caption{Time evolution of the normalized area/mass loss $\frac{\Delta A^h(t)}{A^h(0)}$ (left (a)) and the normalized energy $\frac{W^h_o(t)}{W^h_o(0)}$ (right (b)) for different
mesh sizes $h$. The anisotropic surface energy is chosen as $\gamma=1+0.05\cos4\theta$ and material constant $\sigma=-\frac{\sqrt{2}}{2}$ in
\eqref{def of f}.}
\label{fig:area conservation open}
\end{figure}


\begin{figure}[t!]
\centering
\includegraphics[width=0.8\textwidth]{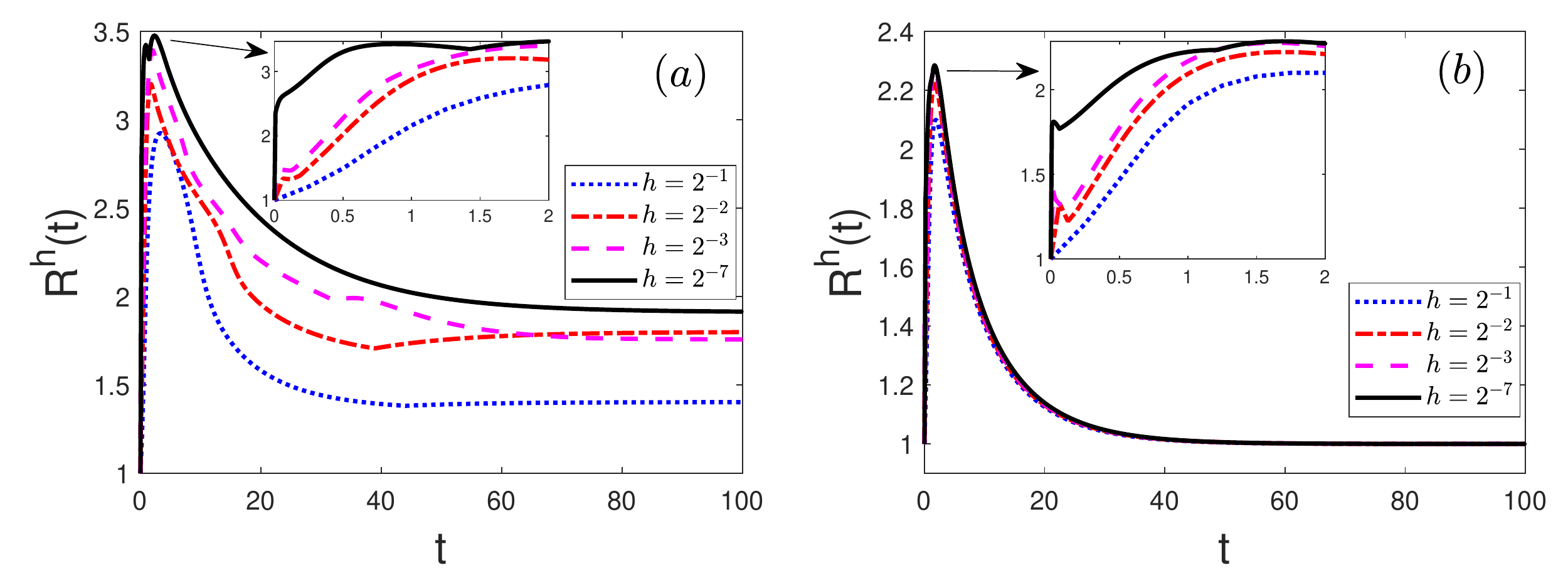}
\caption{Time evolution of the mesh ratio $R^h(t)$ for different mesh sizes $h$ with $\sigma=-\frac{\sqrt{2}}{2}$ in
\eqref{def of f} and: (a) an anisotropic surface energy $\gamma(\theta)=1+0.05\cos 4\theta$; and (b) an isotropic surface energy $\gamma(\theta)\equiv 1$.}
\label{fig:mesh ratio open}
\end{figure}

Again, from Figures
\ref{fig:convergence rate test2}-\ref{fig:mesh ratio open},
we can draw the following conclusions for the ES-PFEM
\eqref{eqn:full discretization gamma(theta) open}
for the evolution
of open curves under anisotropic surface diffusion with applications
in solid-state dewetting:

(i) The ES-PFEM
\eqref{eqn:full discretization gamma(theta) open}
 is second order accurate in space and first order accurate in time (cf. Figure
\ref{fig:convergence rate test2}).

(ii) It is unconditionally energy stable when the anisotropic surface energy
$\gamma(\theta)$ satisfies those energy dissipation conditions in Section 3
(cf. Figure \ref{fig:area conservation open}b).

(iii) The mesh ratio $R^h(t=t_m)$ increases during a short period near $t=0$ and then it decreases to a constant when $t\gg1$. For isotropic surface energy, i.e. isotropic surface diffusion, $R^h(t=t_m)\to 1$ when
$t\to+\infty$ (cf. Figure \ref{fig:mesh ratio open}b), which indicates
asymptotic mesh equal distribution (AMED) of the ES-PFEM
\eqref{eqn:full discretization gamma(theta) open} for isotropic surface diffusion. On the other hand,
for anisotropic surface energy, i.e. anisotropic surface diffusion, $R^h(t=t_m)\to C>1$ when
$t\to+\infty$ (cf. Figure \ref{fig:mesh ratio open}a), which indicates
asymptotic mesh quasi-equal distribution (AMQD) of the ES-PFEM
\eqref{eqn:full discretization gamma(theta) open}
for anisotropic surface diffusion.

(iv) Area/mass loss is observed during a short period near $t=0$,
especially when the mesh size $h$ is not small
(cf. Figure \ref{fig:area conservation open}a). When
$t=t_m\gg1$, area/mass is almost conserved and
we observed numerically that $|\frac{\Delta A^h(t_m)}{A^h(0)}|\le Ch^2$, i.e.
it converges quadratically and this agrees with the second order accuracy
in space of the ES-PFEM \eqref{eqn:full discretization gamma(theta) open}.

\begin{figure}[t!]
\centering
\includegraphics[width=0.8\textwidth]{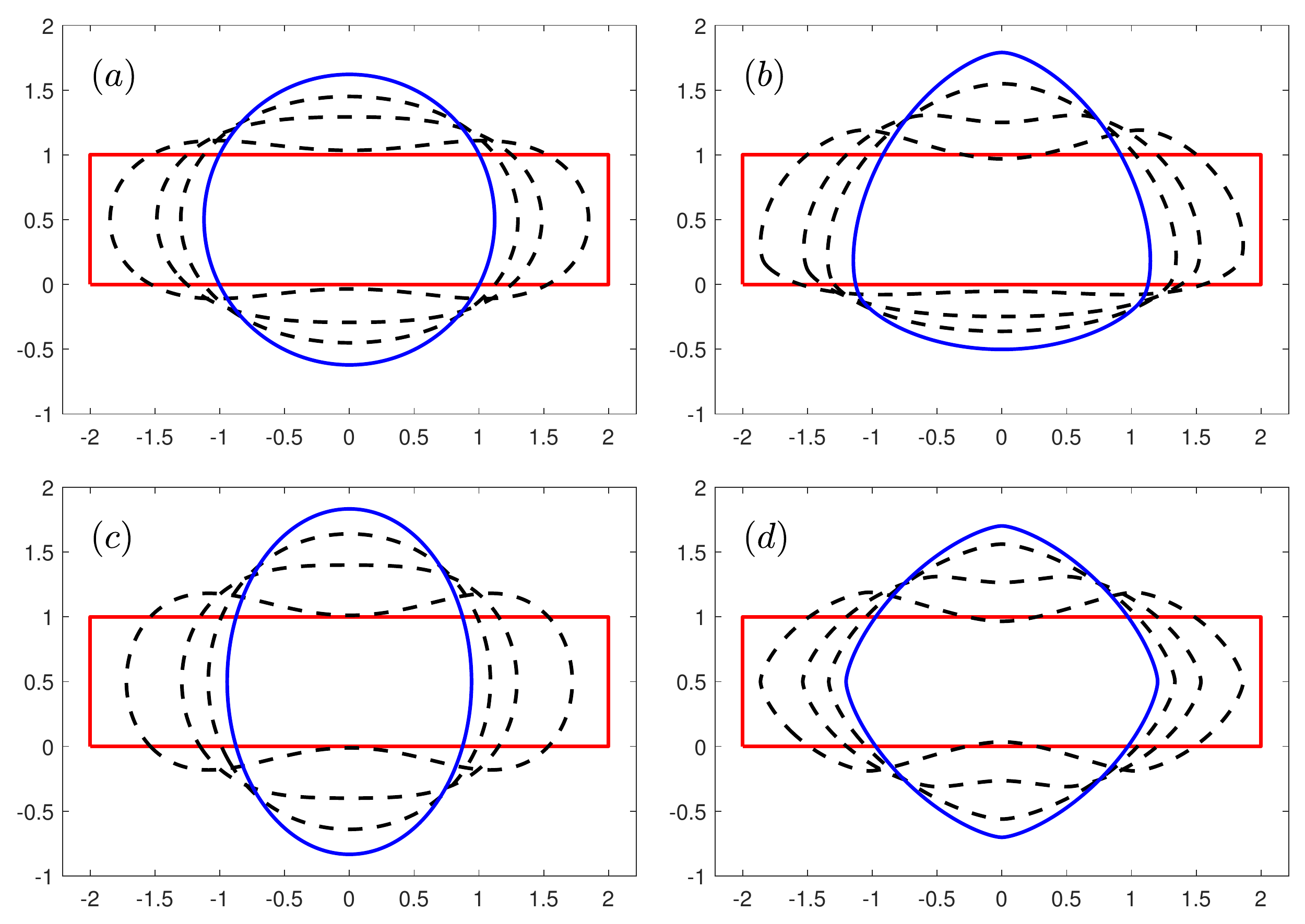}
\caption{Morphological evolutions of a close rectangular curve under
anisotropic surface diffusion with different anisotropic surface energies:  (a) $\gamma(\theta)\equiv 1$, (b) $\gamma(\theta)=1+\frac{1}{10}\cos 3\theta$, (c) $\gamma(\theta)=\sqrt{1+\cos^2\theta}$, and (d) $\gamma(\theta)=1+\frac{1}{17}\cos 4\theta$. Other parameters are chosen as $h=2^{-6}, \tau=h^2$. The red line is the initial shape, the black dashed lines are some snapshots during the evolution and the blue line is the equilibrium shape. }
\label{fig:shape closed}
\end{figure}

\begin{figure}[t!]
\centering
\includegraphics[width=0.8\textwidth]{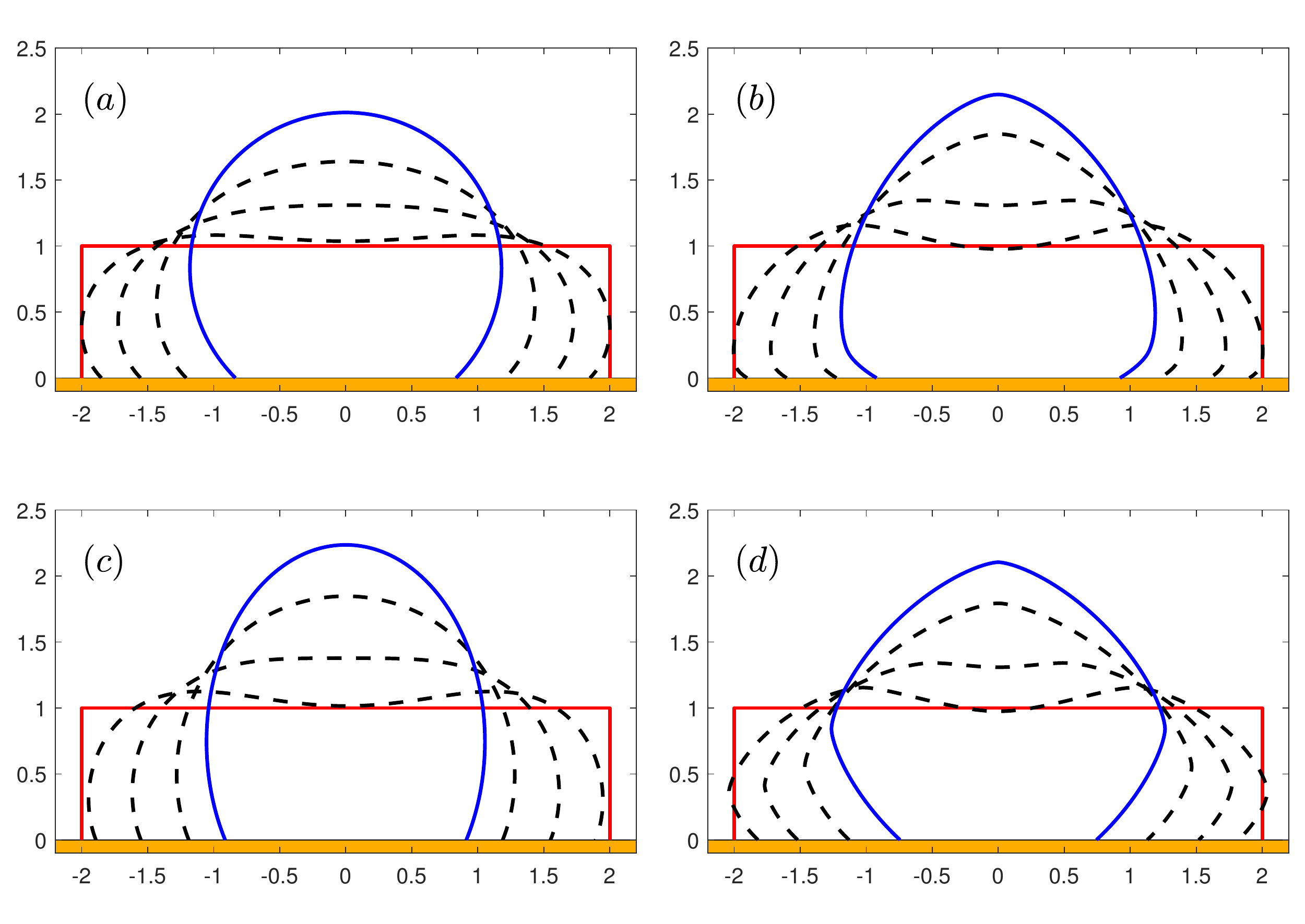}
\caption{Morphological evolutions of an open rectangular curve under anisotropic surface diffusion with different anisotropic surface energies:  (a) $\gamma(\theta)\equiv 1$, (b) $\gamma(\theta)=1+\frac{1}{10}\cos 3\theta$, (c) $\gamma(\theta)=\sqrt{1+\cos^2\theta}$, and (d) $\gamma(\theta)=1+\frac{1}{17}\cos 4\theta$. Other parameters are chosen as $\sigma=-\frac{\sqrt{2}}{2}, h=2^{-6}, \tau=h^2$. The red line is the initial shape, the black dashed lines are some snapshots during the evolution, the blue line is the equilibrium shape and the brown base represents the substrate. }
\label{fig:shape open}
\end{figure}

\subsection{Applications of the ES-PFEM for morphological evolution}

Finally we examine the morphological evolution under different anisotropic surface energies by our proposed ES-PFEM. The morphological evolutions of closed curves and open curves from a $4\times 1$ rectangle  towards their equilibrium shapes are shown in Figure \ref{fig:shape closed} and Figure \ref{fig:shape open}, respectively.
Four different anisotropic surface energies are taken as the isotropic energy $\gamma(\theta)\equiv 1$, the $k$-fold anisotropic energies $\gamma(\theta)=1+\frac{1}{1+3^2}\cos(3\theta)=1+\frac{1}{10}\cos (3\theta)$, $\gamma(\theta)=1+\frac{1}{1+4^2}\cos(4\theta)=1+\frac{1}{17}\cos (4\theta)$, and the ellipsoidal anisotropic energy $\gamma(\theta)=\sqrt{1+\cos^2\theta}$. For open curves,
we take $\sigma=-\frac{\sqrt{2}}{2}$ in \eqref{def of f}. From Corollaries \ref{coro: ellipsoidal} and \ref{coro: k-fold}, the parameters $a=1, \,b=1$ in $\gamma(\theta)=\sqrt{1+\cos^2\theta}$ attain the largest ratio $\frac{b}{a}=1$ that we have proved for the ellipsoidal anisotropy, and the parameters $\beta=\frac{1}{1+3^2}, \beta=\frac{1}{1+4^2}$ are also the largest $\beta_{\max}=\frac{1}{1+k^2}$ for the $k$-fold anisotropy.

\begin{figure}[t!]
\centering
\includegraphics[width=0.7\textwidth]{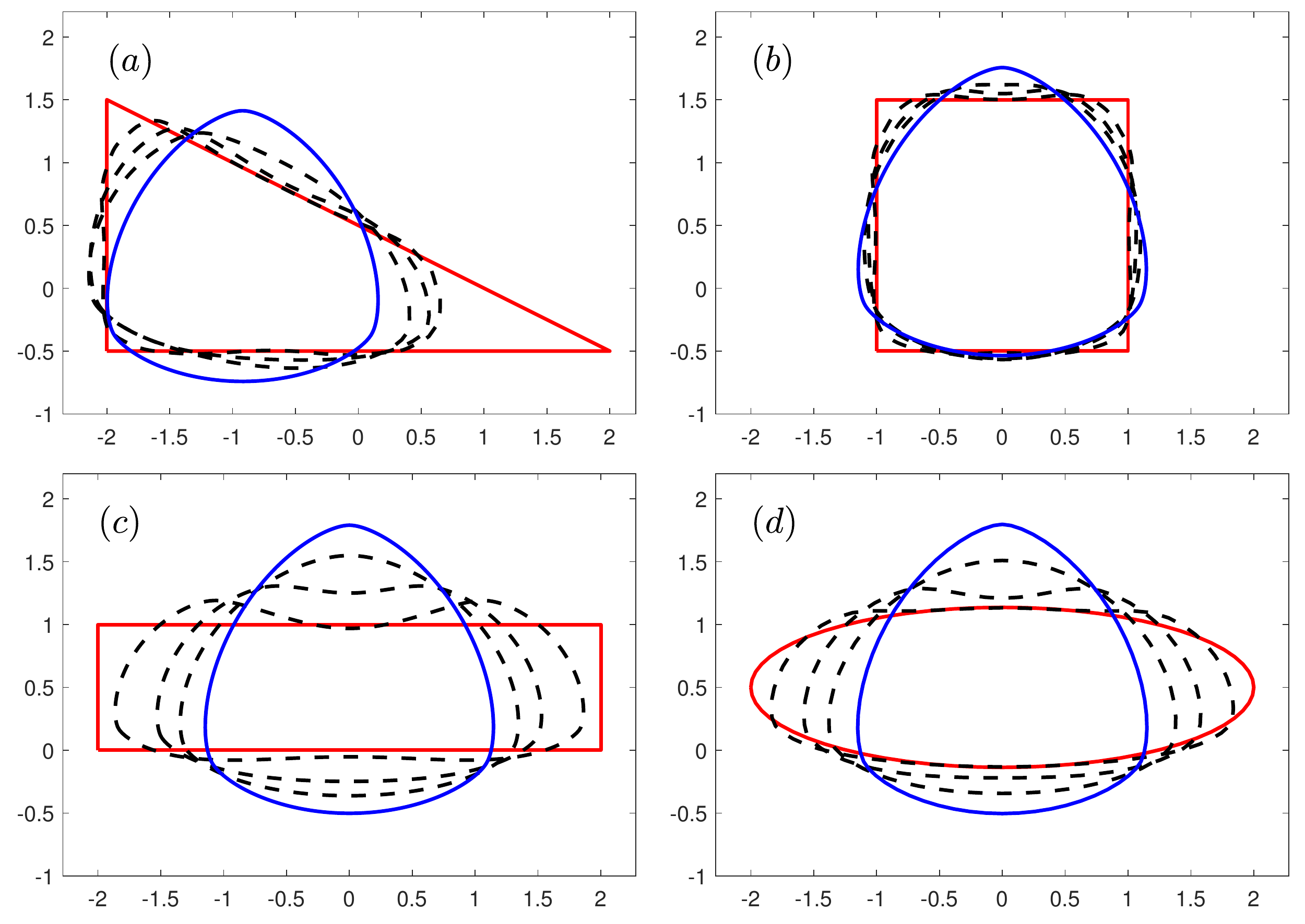}
\caption{Morphological evolutions of different closed initial curves under anisotropic surface diffusion with anisotropic surface energy $\gamma(\theta)=1+\frac{1}{10}\cos(3\theta)$:  (a) an initial $4\times 2$ right triangle, (b) an initial $2\times 2$ square, (c) an initial $4\times 1$ rectangle, and (d) an initial ellipse with length $4$ and width $\frac{4}{\pi}$. Other parameters are chosen as $h=2^{-6}, \tau=h^2$. The red line is the initial shape, the black dashed lines are some snapshots during the evolution, the blue line is the equilibrium shape. }
\label{fig:diff initial}
\end{figure}

\begin{figure}[t!]
\centering
\includegraphics[width=0.7\textwidth]{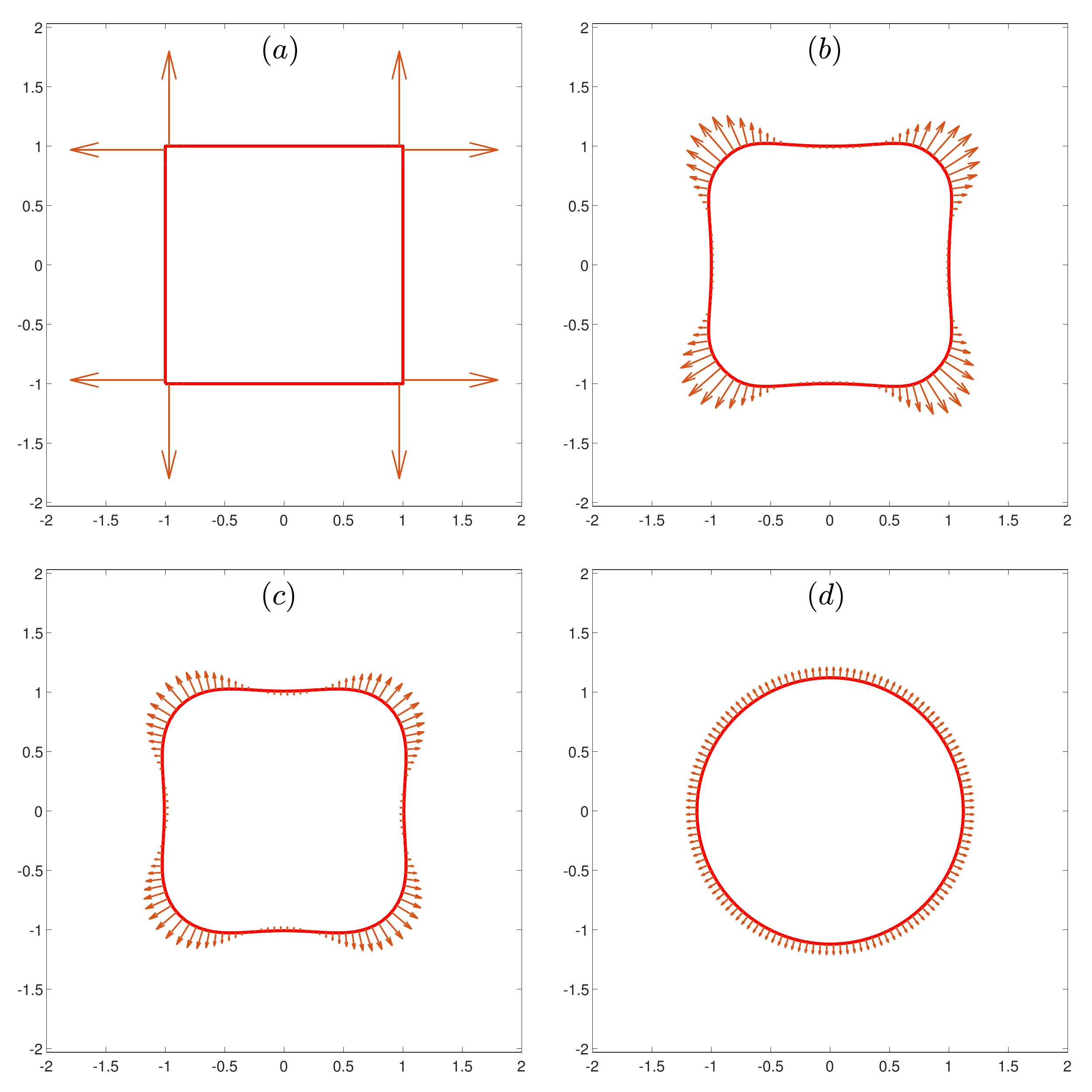}
\caption{Time evolution of the curvature $\kappa$ under isotropic surface diffusion at: (a) $t=0$; (b) $t=\tau$; (c) $t=3 \tau$; (d) $t=10^4\tau$, where mesh size $h=2^{-5}$ and time step $\tau=h^2$. The initial curve is a $2\times 2$ square. The length of the arrow in the figure is scaled as one-tenth of the actual $\kappa$.}
\label{fig:evolution of weighted mean curvature}
\end{figure}

\begin{figure}[t!]
\centering
\includegraphics[width=0.7\textwidth]{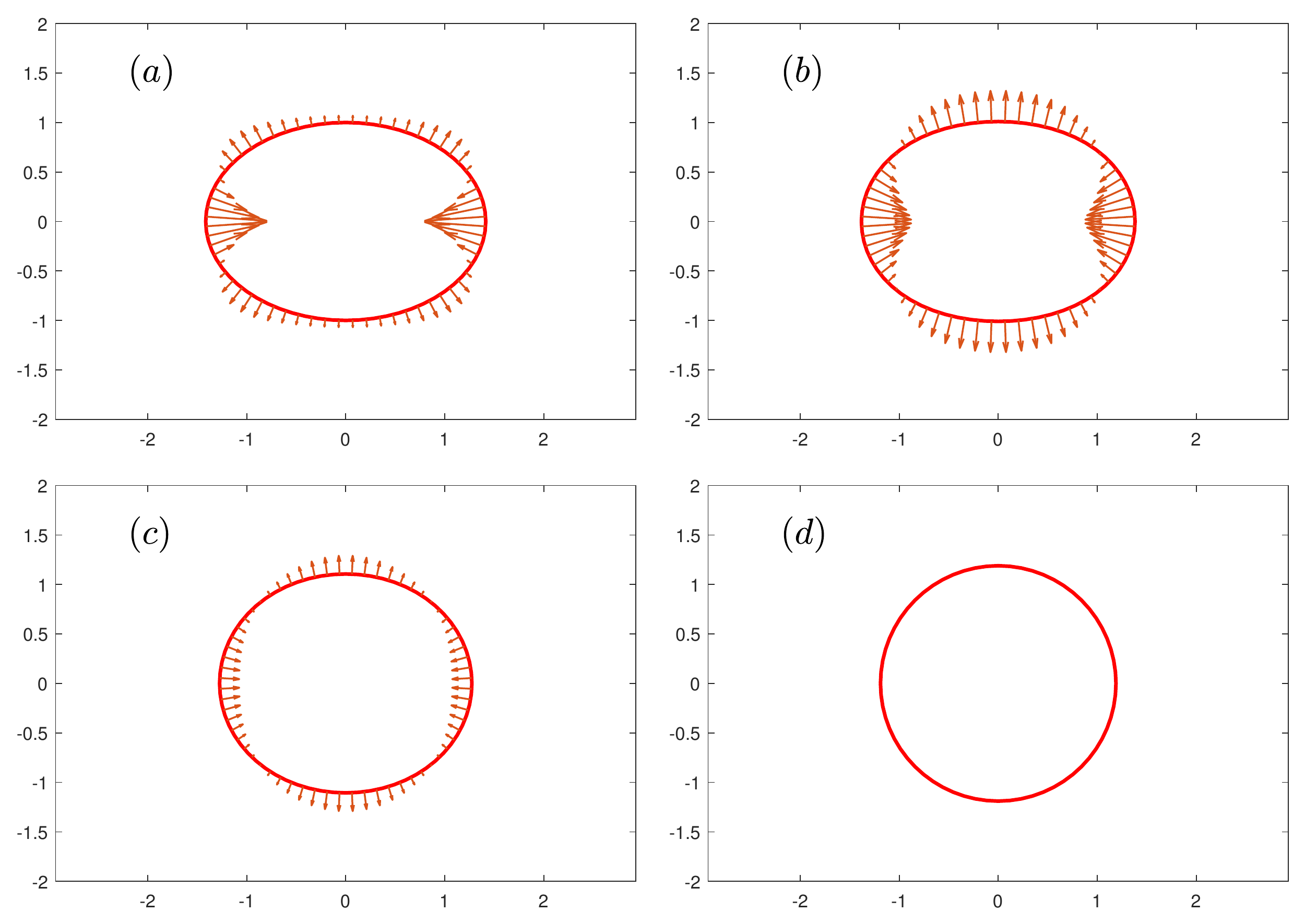}
\caption{Time evolution of the normal velocity $V$ under isotropic surface diffusion at: (a) $t=0$; (b) $t=20\tau$; (c) $t=200 \tau$; (d) $t=10^4\tau$, where mesh size $h=2^{-5}$ and time step $\tau=h^2$. The initial curve is an ellipse with length $2\sqrt{3}$ and width $2$. The length of the arrow in the figure is scaled as one-tenth of the actual $V$ for $t>0$, and as one-twentieth of the actual $V$ for $t=0$.}
\label{fig:evolution of V}
\end{figure}

As observed from Fig. \ref{fig:shape closed}(a)-(d) and Fig. \ref{fig:shape open}(a)-(d), the equilibrium shapes for the isotropic surface energy and the ellipsoidal anisotropic surface energy are indeed circle and ellipsis, respectively. As for $k$-fold anisotropy, when $k$ is changed from $3$ to $4$, the number of edges in their equilibrium shapes are also changed accordingly, as expected, which agree with both theoretical predictions and previous numerical results. Moreover, our ES-PFEM can handle the largest ratio $\frac{b}{a}=1$ and the largest $\beta=\beta_{\max}$ well for both closed curves and open curves.

Our ES-PFEM also works well for different initial shapes including continuous but piecewise smooth initial curves. Figure \ref{fig:diff initial} plots the morphological evolutions of four different closed initial configurations with $k$-fold anisotropy $\gamma(\theta)=1+\frac{1}{10}\cos(3\theta)$. We can see our ES-PFEM can handle successfully different initial curves with the same area, and the final equilibrium  of different initial configurations is the same, which is consistent with the theoretical result.

Another three important quantities in morphological evolutions are the weighted curvature $\mu^m$, the curvature $\kappa^m$, and the normal velocity $V^{m}:=\vec n^m\cdot \frac{\vec X^{m+1}-\vec X^m}{\tau}$ which is an numerical approximation of $V(\cdot,t=t_m)=\partial_{ss}\mu(\cdot,t=t_m)=\left.\vec n\cdot \partial_t \vec X\right|_{t=t_m}$. Notice that in our ES-PFEMs
\eqref{eqn:full discretization gamma(theta) torus} and \eqref{eqn:full discretization gamma(theta) open} for the evolution of a closed and open curve, respectively, we state how to compute numerically
$\mu^m$ for $m\ge1$, but
we do not show how to compute numerically $\mu^0$ and $\kappa^m$ for $m\ge0$.
In fact, for a given closed initial configuration $\Gamma^0=\vec X^{0}$ which might be continuous but only piecewise smooth such as a rectangle, one can adapt the following variational formulation to compute numerically $\mu^0 \in\mathbb{K}^h_p$ and $\kappa^m\in\mathbb{K}^h_p$:
\begin{equation}
\begin{split}
&\Bigl(\mu^{0},\vec n^0\cdot\boldsymbol{\omega}^h\Bigr)_{\Gamma^0}^h-\Bigl(G(\theta^0)\partial_s \vec X^{0},~\partial_s\boldsymbol{\omega}^h\Bigr)_{\Gamma^0}^h=0,
\quad\forall\boldsymbol{\omega}^h\in\mathbb{X}^h_p,\\
&\Bigl(\kappa^{m},\vec n^m\cdot\boldsymbol{\omega}^h\Bigr)_{\Gamma^m}^h-\Bigl(\partial_s \vec X^{m},~\partial_s\boldsymbol{\omega}^h\Bigr)_{\Gamma^m}^h=0,
\quad\forall\boldsymbol{\omega}^h\in\mathbb{X}^h_p, \qquad m\ge0.
\end{split}
\end{equation}
Similarly, for a given open initial configuration $\Gamma^0=\vec X^{0}$, one needs to replace $\mathbb{K}_p^h$ and $\mathbb{X}^h_p$ by $\mathbb{K}^h$ and $\mathbb{X}^h$, respectively. Figure
\ref{fig:evolution of weighted mean curvature} displays time evolution of the curvature $\kappa(t=t_m)$  at different times under isotropic surface diffusion starting from an initial $2\times 2$ square. Similarly, Figure \ref{fig:evolution of V} shows time evolution of the normal velocity $V(t=t_m)$ at different times under isotropic surface diffusion starting from an initial ellipse with length $2\sqrt{3}$ and width $2$.

From Fig. \ref{fig:evolution of weighted mean curvature}, we can see that: (i) the curvature $\kappa$ at the four sharp corners are discontinuous at $t=0$, which are `numerically' significant larger than those values of their neighbors (cf. Fig. \ref{fig:evolution of weighted mean curvature}a), (ii) after evolution of a few time steps with a small time step
size $\tau$, the sharp corners are being smoothed and the values of the curvature $\kappa$ become comparable with those values of their neighbours
(cf. Fig. \ref{fig:evolution of weighted mean curvature}b\&c), and (iii) when the curve reaches its equilibrium shape, the curvature $\kappa$ are almost the same at each point of the curve (cf. Fig. \ref{fig:evolution of weighted mean curvature}d).

\section{Conclusions}

By introducing a positive definite surface energy (density) matrix $G(\theta)$ depending on the anisotropic surface energy $\gamma(\theta)$,
we obtained new and simple variational formulations for the motion
of closed curves under anisotropic surface diffusion or open curves
under anisotropic surface diffusion and contact line migration with applications in solid-state dewetting in materials science.
We proved area/mass conservation and energy dissipation of the variational
problems. The variational problems were first discretized in space
by the parametric finite element method (PFEM) and then were
discretized in time by an implicit/expicit (IMEX) backward Euler method.
The full-discretization is semi-implicit and efficient since
only a linear system needs to be solved at each time step.
We identified different energy dissipation conditions on the
anisotropic surface energy $\gamma(\theta)$ such that both the semi-discretization and full-discretization are unconditionally
energy stable. Our numerical results suggested that the proposed
energy-stable PFEM (ES-PFEM) has nice mesh quality -- asymptotic
mesh quasi-equal distribution -- of the curves
during their dynamics, i.e. no re-meshing is needed during the simulation.
In the future, we will extend the new variational formulation to anisotropic
surface diffusion in three dimensions \cite{Jiang,Zhao}
and other geometric flows arising
from different applications.


\bigskip

\begin{appendices}
\fakesection{{\bf Appendix A}. Two trigonometric identities and their proof}
\begin{center}
{\bf Appendix A}. Two trigonometric identities and their proof
\end{center}
\setcounter{equation}{0}
\renewcommand{\theequation}{A.\arabic{equation}}

Here we show two trigonometric identities which are used to prove Lemma B.1
in Appendix B.

\begin{lem}\label{trignometric identity}
$\forall n\in\mathbb{Z}^+,\, \forall\theta,\,\phi\in [-\pi,\pi]$, the following two trigonometric identities hold:
\begin{equation}\label{sin}
\sin(n\theta)-\sin(n\phi)-n\cos(n\theta)\sin(\theta-\phi)
=(1-\cos(\theta-\phi))\left(n\sin(n\theta)+\sum_{l=1}^{n-1}
2l\sin\left(l\theta+(n-l)\phi\right)\right),
\end{equation}
\begin{equation}\label{cos}
\cos(n\theta)-\cos(n\phi)+n\sin(n\theta)\sin(\theta-\phi)
=(1-\cos(\theta-\phi))\left(n\cos(n\theta)+\sum_{l=1}^{n-1}
2l\cos\left(l\theta+(n-l)\phi\right)\right).
\end{equation}
\end{lem}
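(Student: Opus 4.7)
The plan is to observe that both identities depend on $\theta$ and $\phi$ only through the pair $(\sin(n\theta),\cos(n\theta))$ and through $u:=\theta-\phi$, so I would first reduce the two bivariate claims to a pair of one-variable claims, and then knock each one off with a standard tool.

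Concretely, writing $\phi=\theta-u$ and applying the addition formulas
\begin{equation*}
\sin(n\phi)=\sin(n\theta)\cos(nu)-\cos(n\theta)\sin(nu),\qquad \sin(l\theta+(n-l)\phi)=\sin(n\theta-(n-l)u),
\end{equation*}
(and their cosine analogues), both sides of \eqref{sin} become an expression of the form $\sin(n\theta)\,P(u)-\cos(n\theta)\,Q(u)$, while both sides of \eqref{cos} become $\cos(n\theta)\,P(u)+\sin(n\theta)\,Q(u)$ with the \emph{same} $P$ and $Q$. After reindexing $k=n-l$ inside the sums, matching coefficients of $\sin(n\theta)$ and $\cos(n\theta)$ shows that \eqref{sin} and \eqref{cos} are simultaneously equivalent to the single pair
\begin{align*}
\text{(A)}\quad & 1-\cos(nu)=(1-\cos u)\Bigl[n+2\sum_{k=1}^{n-1}(n-k)\cos(ku)\Bigr],\\
\text{(B)}\quad & n\sin u-\sin(nu)=2(1-\cos u)\sum_{k=1}^{n-1}(n-k)\sin(ku).
\end{align*}

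To prove (A) I would invoke the classical Fej\'er-kernel identity
\begin{equation*}
\Bigl(\frac{\sin(nu/2)}{\sin(u/2)}\Bigr)^{2}=n+2\sum_{k=1}^{n-1}(n-k)\cos(ku),
\end{equation*}
together with the half-angle formulas $1-\cos(nu)=2\sin^{2}(nu/2)$ and $1-\cos u=2\sin^{2}(u/2)$; the two combine into (A) immediately. To prove (B) I would use the product-to-sum rewriting
\begin{equation*}
2(1-\cos u)\sin(ku)=2\sin(ku)-\sin((k-1)u)-\sin((k+1)u),
\end{equation*}
substitute it into the sum, and telescope via the index shifts $k\mapsto k\pm1$: the coefficient of $\sin(ju)$ for $2\le j\le n-1$ cancels, leaving exactly $\tfrac{n}{2}\sin u$ from $j=1$ and $-\tfrac{1}{2}\sin(nu)$ from $j=n$, which is $\tfrac{1}{2}(n\sin u-\sin(nu))$.

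The only genuine obstacle I expect is careful bookkeeping of the telescoping in (B), in particular tracking the boundary contributions at $j=1$ and $j\in\{n-1,n\}$ after the two index shifts; once the Fej\'er-kernel identity is recognized, (A) is immediate. As a one-shot alternative, one can set $z=e^{iu}$, form the complex combination \eqref{cos}$+i\cdot$\eqref{sin}, and verify the single resulting identity by plugging in the closed form $\sum_{l=1}^{n-1}lz^{l}=(z-nz^{n}+(n-1)z^{n+1})/(1-z)^{2}$; taking real and imaginary parts then recovers \eqref{sin} and \eqref{cos} simultaneously.
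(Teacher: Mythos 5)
Your proposal is correct in substance but follows a genuinely different route from the paper's. The paper proves \eqref{sin} by direct computation in the two variables: it subtracts the left-hand side from the right, expands every product $\cos(\theta-\phi)\sin\left(l\theta+(n-l)\phi\right)$ by the product-to-sum formula, shifts summation indices, and checks that all terms cancel; the cosine identity \eqref{cos} is then stated to follow "similarly" and its details are omitted. You instead write $\phi=\theta-u$ and observe that both sides of \eqref{sin} and \eqref{cos} are the same linear combinations of $\sin(n\theta)$ and $\cos(n\theta)$ with coefficients depending only on $u$, which reduces the two bivariate identities simultaneously to the one-variable pair $1-\cos(nu)=(1-\cos u)\bigl[n+2\sum_{k=1}^{n-1}(n-k)\cos(ku)\bigr]$ and $n\sin u-\sin(nu)=2(1-\cos u)\sum_{k=1}^{n-1}(n-k)\sin(ku)$; the first is exactly the Fej\'er-kernel identity combined with the half-angle formulas (with the removable case $\sin(u/2)=0$ handled by continuity or direct inspection), and the second follows from the same product-to-sum telescoping that the paper uses, but in a single variable. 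Your route is shorter and more conceptual, treats both identities at once, and explains their origin; the paper's argument is longer but entirely elementary and self-contained. One arithmetic slip to fix in your telescoping for the sine identity: the surviving boundary terms are $n\sin u$ (coefficient $2(n-1)-(n-2)=n$ at $j=1$) and $-\sin(nu)$ (coefficient $-1$ at $j=n$), not $\frac{n}{2}\sin u$ and $-\frac{1}{2}\sin(nu)$; with the correct bookkeeping the telescoped sum equals exactly $n\sin u-\sin(nu)$, which is precisely what your identity (B) asserts, so the proof closes, whereas your stated values would have left a spurious factor of $\frac{1}{2}$.
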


\begin{proof}
To prove \eqref{sin}, noticing the trigonometric identity
\begin{equation}\label{prod to sum} \cos\alpha\sin\beta=\frac{\sin(\alpha+\beta)-\sin(\alpha-\beta)}{2},
\end{equation}
subtracting the left hand side of \eqref{sin} by its right hand side, we get
\begin{align}
&\left(1-\cos(\theta-\phi)\right)\left(n\sin(n\theta)+
\sum_{l=1}^{n-1}2l\sin(l\theta+(n-l)\phi)\right)-
\left[\sin(n\theta)-\sin(n\phi)-n\cos(n\theta)
\sin(\theta-\phi)\right]\nonumber\\
&=\sum_{l=1}^{n-1}2l\sin(l\theta+(n-l)\phi)-
\sum_{l=1}^{n-1}2l\cos(\theta-\phi)\sin(l\theta+(n-l)\phi)+
n\sin(n\theta)-n\cos(\theta-\phi)\sin(n\theta)\nonumber\\
&\quad-\left[\sin(n\theta)-\sin(n\phi)-
n\frac{\sin\left((n+1)\theta-\phi\right)-\sin\left((n-1)
\theta+\phi\right)}{2}\right]\nonumber\\
&=\sum_{l=1}^{n-1}2l\sin(l\theta+(n-l)\phi)-
\sum_{l=1}^{n-1}l\left[\sin\Bigl((l+1)\theta+(n-l-1)\phi\Bigr)+
\sin\Bigl((l-1)\theta+(n-l+1)\phi\Bigr)\right]\nonumber\\
&\quad+n\sin(n\theta)-n\frac{\sin\left((n+1)\theta-\phi\right)+
\sin\left((n-1)\theta+\phi\right)}{2}\nonumber\\
&\quad-\left[\sin(n\theta)-\sin(n\phi)-n\frac{\sin\left((n+1)\theta-
\phi\right)-\sin\left((n-1)\theta+\phi\right)}{2}\right]\nonumber\\
&=\sum_{l=1}^{n-1}2l\sin(l\theta+(n-l)\phi)-\sum_{l=0}^{n-2}(l+1)
\sin(l\theta+(n-l)\phi)-\sum_{l=2}^{n}(l-1)\sin(l\theta+(n-l)\phi)\nonumber\\
&\quad+(n-1)\sin(n\theta)+\sin(n\phi)-n\sin\left((n-1)
\theta+\phi\right)\nonumber\\
&=2(n-1)\sin\left((n-1)\theta+\phi\right)+2\sin\left(\theta+(n-1)
\phi\right)-\sin(n\phi)-2\sin\left(\theta+(n-1)\phi\right)\nonumber\\
&\quad-(n-1)\sin(n\theta)-(n-2)\sin\left((n-1)\theta+\phi\right)
+(n-1)\sin(n\theta)+\sin(n\phi)-n\sin\left((n-1)\theta+
\phi\right)\nonumber\\
&=0, \qquad \forall\theta,\,\phi\in [-\pi,\pi],
\end{align}
which implies the trigonometric identity \eqref{sin}.
Similarly, we can prove the second trigonometric identity \eqref{cos}
and the details are omitted here for brevity.
\end{proof}

\fakesection{{\bf Appendix B}. A trigonometric inequality and its proof}
\begin{center}
{\bf Appendix B}. A trigonometric inequality and its proof
\end{center}
\setcounter{equation}{0}
\renewcommand{\theequation}{B.\arabic{equation}}
Here we prove a
trigonometric inequality which is  used to prove Theorem 4.3.

\begin{lem}\label{trignometric ineq coro}
 The following trigonometric inequality holds:
\begin{align}\label{trignometric ineq}
&a_n(\cos(n\theta)-\cos(n\phi)+n\sin(n\theta)\sin(\theta-
\phi))+b_n(\sin(n\theta)-\sin(n\phi)-n\cos(n\theta)
\sin(\theta-\phi))\nonumber\\
&\geq -(1-\cos(\theta-\phi))n^2\sqrt{a_n^2+b_n^2}, \qquad \forall\theta,\,\phi\in [-\pi,\pi], \qquad \forall n\in\mathbb{Z}^+.
\end{align}
\end{lem}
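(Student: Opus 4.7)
The plan is to reduce \eqref{trignometric ineq} to a simple amplitude bound by first rewriting both parenthetical expressions using the two trigonometric identities from Lemma~\ref{trignometric identity} in Appendix~A, and then exploiting that $\sqrt{a_n^2+b_n^2}$ is a natural upper bound for any sinusoid with coefficients $a_n,b_n$.

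First, I would apply \eqref{cos} and \eqref{sin} to replace
\[
\cos(n\theta)-\cos(n\phi)+n\sin(n\theta)\sin(\theta-\phi),\qquad
\sin(n\theta)-\sin(n\phi)-n\cos(n\theta)\sin(\theta-\phi)
\]
on the left of \eqref{trignometric ineq} by
$(1-\cos(\theta-\phi))(n\cos(n\theta)+\sum_{l=1}^{n-1}2l\cos(l\theta+(n-l)\phi))$
and
$(1-\cos(\theta-\phi))(n\sin(n\theta)+\sum_{l=1}^{n-1}2l\sin(l\theta+(n-l)\phi))$,
respectively. Since $1-\cos(\theta-\phi)\ge 0$, the inequality \eqref{trignometric ineq} reduces to showing
\[
a_n\Bigl(n\cos(n\theta)+\sum_{l=1}^{n-1}2l\cos(l\theta+(n-l)\phi)\Bigr)
+b_n\Bigl(n\sin(n\theta)+\sum_{l=1}^{n-1}2l\sin(l\theta+(n-l)\phi)\Bigr)
\ge -n^2\sqrt{a_n^2+b_n^2}.
\]

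Next, writing $a_n=r\cos\alpha$, $b_n=r\sin\alpha$ with $r:=\sqrt{a_n^2+b_n^2}$ and using the elementary identity $a_n\cos\beta+b_n\sin\beta=r\cos(\beta-\alpha)$ on each pair of matched sine/cosine terms, the left-hand side becomes
\[
r\Bigl[n\cos(n\theta-\alpha)+\sum_{l=1}^{n-1}2l\cos\bigl(l\theta+(n-l)\phi-\alpha\bigr)\Bigr].
\]
Bounding each cosine below by $-1$ and summing the absolute weights $n+\sum_{l=1}^{n-1}2l=n+n(n-1)=n^2$ gives the lower bound $-rn^2=-n^2\sqrt{a_n^2+b_n^2}$, which is exactly what is needed. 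Multiplying back by the non-negative factor $1-\cos(\theta-\phi)$ recovers \eqref{trignometric ineq}.

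The only mildly delicate point is the rewriting step via Lemma~\ref{trignometric identity}; once that identity is in hand the remainder is a direct amplitude estimate, so no real obstacle arises. One must only be careful that the weighted sum $n+2\sum_{l=1}^{n-1}l$ indeed collapses to $n^2$, which ensures the constant on the right-hand side of \eqref{trignometric ineq} is sharp.
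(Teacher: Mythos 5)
Your proposal is correct and follows essentially the same route as the paper's proof: apply the two identities of Lemma \ref{trignometric identity}, factor out the nonnegative quantity $1-\cos(\theta-\phi)$, and bound each sinusoidal combination below by $-\sqrt{a_n^2+b_n^2}$, with the weights $n+2\sum_{l=1}^{n-1}l=n^2$ giving the stated constant. The phase-angle rewriting $a_n\cos\beta+b_n\sin\beta=r\cos(\beta-\alpha)$ is just an explicit form of the amplitude bound the paper uses, so there is no substantive difference.
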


\begin{proof}
Using \eqref{sin} and \eqref{cos}, noticing that
\[
1-\cos(\theta-\phi)\geq 0, \qquad
a_n\cos n\theta+b_n\sin n\theta\geq -\sqrt{a_n^2+b_n^2}, \qquad \forall\theta,\,\phi\in [-\pi,\pi], \qquad \forall n\in\mathbb{Z}^+,
\]
we have
\begin{align}
&a_n(\cos(n\theta)-\cos(n\phi)+n\sin(n\theta)
\sin(\theta-\phi))+b_n(\sin(n\theta)-\sin(n\phi)-
n\cos(n\theta)\sin(\theta-\phi))\nonumber\\
&=a_n(1-\cos(\theta-\phi))\left(n\cos(n\theta)+
\sum_{k=1}^{n-1}2k\cos\left(k\theta+(n-k)\phi\right)\right)\nonumber\\
&\ +b_n(1-\cos(\theta-\phi))\left(n\sin(n\theta)+
\sum_{k=1}^{n-1}2k\sin\left(k\theta+(n-k)\phi\right)\right)\nonumber\\
&\geq -(1-\cos(\theta-\phi))\left(n+\sum_{k=1}^{n-1}2k\right)
\sqrt{a_n^2+b_n^2}\nonumber\\
&=(1-\cos(\theta-\phi))n^2\sqrt{a_n^2+b_n^2},\qquad \forall\theta,\,\phi\in [-\pi,\pi], \qquad \forall n\in\mathbb{Z}^+,
\end{align}
which implies the desired inequality \eqref{trignometric ineq}.
\end{proof}

\end{appendices}

\section*{Acknowledgement}

This work was supported by the Academic Research Fund of the Ministry of Education of Singapore grant No.~MOE2019-T2-1-063 (R-146-000-296-112).
Part of the work was done when the authors were visiting the Institute of Mathematical Science at the National University of Singapore in 2020.

\section*{References}

\end{document}